\numberwithin{equation}{section}
\newtheoremstyle{cited}
{3pt}
{3pt}
{\itshape}
{}
{\bfseries}
{.}
{.5em}
{\thmname{#1} \thmnumber{#2} \thmnote{\normalfont#3}}
\theoremstyle{cited}
\newtheorem{citedthm}{Theorem}[section]
\newtheorem{remark}{Remark}[section]
\newtheorem{theorem}{Theorem}[section]
\newtheorem{lemma}[theorem]{Lemma}
\newcommand{\grad}{{\operatorname{grad}}}
\renewcommand{\div}{{\operatorname{div}}}
\newcommand{\jump}[1]{\llbracket #1 \rrbracket }
\newcommand{\vertiii}[1]{{\left\vert\kern-0.25ex\left\vert\kern-0.25ex\left\vert #1 \right\vert\kern-0.25ex\right\vert\kern-0.25ex\right\vert}}
\begin{document}
	
	
	\title{
		Block Preconditioners for Mixed-dimensional Discretization of Flow in Fractured Porous Media
		\thanks{
			The first author acknowledges the financial support from the TheMSES project funded by Norwegian Research Council grant 250223. The work of the second author is partially supported by the National Science Foundation under grant DMS-1620063.
		}
	}
	\titlerunning{Preconditioners for Mixed-dimensional Flow in Fractured Porous Media}
	
	\author{Ana Budi\v{s}a \and Xiaozhe Hu}
	
	\date{Submission date: May 31, 2019}
	
	\institute{
		A. Budi\v{s}a \at
		Department of Mathematics, University of Bergen, P. O. Box 7800, N-5020 Bergen, Norway. \\
		\email{Ana.Budisa@uib.no}
		\and
		X. Hu \at
		Department of Mathematics, Tufts University, 503 Boston Ave, Medford, MA 02155, USA. \\
		\email{Xiaozhe.Hu@tufts.edu}%
	}
		
	\maketitle
	
	\begin{abstract}
	In this paper, we are interested in an efficient numerical method for the mixed-dimensional approach to modeling single-phase flow in fractured porous media. The model introduces fractures and their intersections as lower-dimensional structures, and the mortar variable is used for flow coupling between the matrix and fractures. We consider a stable mixed finite element discretization of the problem, which results in a parameter-dependent linear system. For this, we develop block preconditioners based on the well-posedness of the discretization choice. The preconditioned iterative method demonstrates robustness with regards to discretization and physical parameters. 
	The analytical results are verified on several examples of fracture network configurations, and notable results in reduction of number of iterations and computational time are obtained.
	\keywords{
	porous medium \and fracture flow \and mixed finite element \and algebraic multigrid method \and iterative method \and preconditioning
	}
	\subclass{65F08, 65F10, 65N30}
	\end{abstract}

	
	\section{Introduction}
	\label{sec:intro}
	
	Fracture flow has become a case of intense study recently due to many possible subsurface applications, such as CO$ _2 $ sequestration or geothermal energy storage and production. It has become clear that the dominating role of fractures in the flow process in the porous medium calls for reexamination of existing mathematical models, numerical methods and implementations in these cases. 
	
	Considering modeling and analysis, a popular and effective development is reduced fracture models \cite{karimifard, frih:roberts, boon:nordbotten:yotov} that represent fractures and fracture intersections as lower-dimensional manifolds embedded in a porous medium domain. The immediate advantages of such modeling are in more accurate representation of flow patterns, especially in case of highly conductive fractures, and easier handling of discontinuities over the interfaces. This has also allowed for implementation of various discretization methods, from finite volume methods \cite{karimifard, sandve2012} to (mixed) finite element methods \cite{frih:roberts} and other methods \cite{fumagalli2019, formaggia:scotti2018}. These methods mostly differ in two aspects: whether the fractures conform to the discrete grid of the porous medium \cite{boon:nordbotten:yotov} or are placed arbitrarily within the grid \cite{formaggia2014, scotti_xfem, Schwenck2015}, or whether pressure or flux continuity is preserved. Comparison studies of different discretization methods and their properties can be found in \cite{berre2018, Flemisch2016a, Nordbotten2018}.
	
	Although there is a wide spectrum of discretization methods, little has been done to develop robust and efficient solvers. This aspect of implementation can be very important since applications of fractured porous media usually include large-scale simulations of subsurface reservoirs and the resulting discretized linear systems of equations can become ill-conditioned and quite difficult to solve. The linear system represents a discrete version of the partial differential equation (PDE) operator that has unbounded spectrum.  Thus, its condition number tends to infinity when the mesh size is approaching zero. Moreover, the variability of the physical parameters, such as the permeabilities and aperture, can additionally influence the scale of the condition number of the system. Instead of using direct methods, we consider Krylov subspace iterative methods to solve such large scale problems.  Since the convergence rate of the Krylov subspace methods depends on the condition number of the system, suitable preconditioning techniques are usually required to achieve a good performance. A recent study on a geometric multigrid method \cite{arraras:paco} for the fracture problem shows how standard iterative methods can be extended and perform well on mixed-dimensional discretizations, but still there are limitations that need to be overcome for general fractured porous media simulations. 
	
	In this paper, we aim to provide a general approach to preconditioning the mixed-dimensional flow problems based on suitable mixed finite element method discretization developed in \cite{boon:nordbotten:yotov}. Beside introducing the mixed-dimensional geometry, the main aspects of the discretization are flux coupling between subdomains using a mortar variable and inf-sup stability of the associated saddle-point problem. Moreover, this framework has been shown to be well incorporated within functional analysis as a concept of mixed-dimensional partial differential equations \cite{boon:nordbotten:vatne}, allowing even further applications in poroelasticity and transport problems. 
	
	We propose a set of block preconditioners for Krylov subspace methods for solving the linear system of equations arising from the chosen discretization. Following the theory in \cite{mardal:winther} and \cite{loghin:wathen}, we derive uniform block preconditioners based on the well-posedness of an alternative but equivalent formulation.  Proper weighted norm is chosen so that the well-posedness constants are robust with respect to the physical and discretization parameters but depend on the shape regularity of the meshes. Both block diagonal and triangular preconditioners are developed based on the framework~\cite{loghin:wathen,mardal:winther}.  Those block preconditioners are not only theoretically robust and effective but can also be implemented straightforwardly by taking advantage of the block structure of the problem.  
	
	
	The rest of the paper is organized as follows. In Section~\ref{sec:prelim} we first introduce the mixed-dimensional geometry and the governing equations of the single-phase flow in fractured porous media followed by the variational formulation and the stable mixed finite element discretization of the problem. The framework of the block preconditioners is briefly recalled in Section~\ref{sec:precond} and its application to mixed-dimensional discretization of flow in fractured porous media is proposed and analyzed in Section~\ref{sec:precond_MD}. We verify the theoretical results by testing several numerical examples in Section~\ref{sec:results} and finalize the paper with concluding remarks in Section~\ref{sec:conclusion}.

	
	\section{Preliminaries}
	\label{sec:prelim}

	In this section, we set up the problem of flow in fractured porous media following \cite{boon:nordbotten:yotov}. Let $ \Omega^n $ be a domain of the porous medium of dimension $ n=2, 3 $ that can be decomposed by fractures into $ \Omega_i^n, i \in I^n $. The fractures and their intersections are represented as lower $ d $-dimensional manifolds $ \Omega_i^d, i \in I^d $, $0 \leq d < n$, and inherit the similar decomposition structure as the porous medium $ \Omega^n $ (see Figure \ref{sec:2.fig:geometrical_representation}). Here, we use $ I^d $ as a local index set in dimension $ 0 \leq d \leq n $. Furthermore, we define $ \Gamma^d_{ij} $ for $ j \in J^d_i \subseteq I^d $ as interfaces between $ \Omega_i^{d+1} $ and adjacent $ \Omega_j^{d} $. Union over the subscript set $ I^d $ represents all $ d $-dimensional subdomains, that is 
	\begin{align}
		\Omega^d & = \bigcup\limits_{i \in I^d} \Omega^d_i, \\
		\Gamma^d & = \bigcup\limits_{i \in I^d} \Gamma^d_i = \bigcup\limits_{i \in I^d} \bigcup\limits_{j\in J^d_i} \Gamma^d_{ij}.
	\end{align}
	Finally, the fractured porous medium domain $ \Omega $ with interface $ \Gamma $ is defined as
	\begin{align}
		\Omega &= \bigcup\limits_{d=0}^{n} \Omega^d, \qquad \Gamma = \bigcup\limits_{d=0}^{n-1} \Gamma^d.
	\end{align}
	
	\begin{figure}[htbp]
		\centering \includegraphics[width=0.4\textwidth]{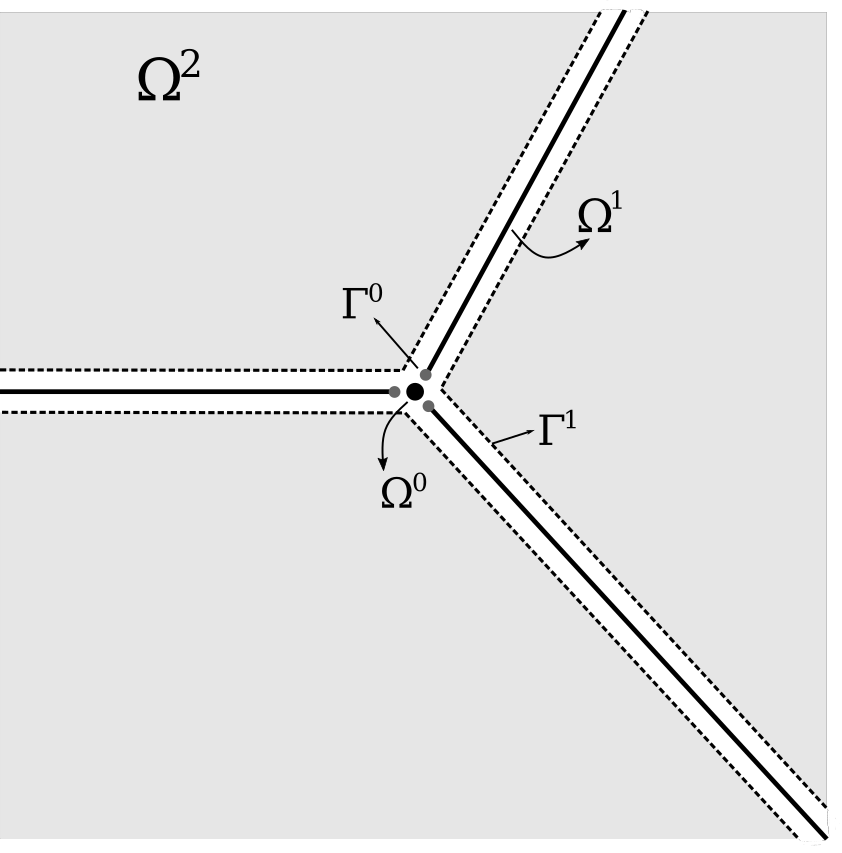}
		\hspace{1cm}
		\includegraphics[width=0.5\textwidth]{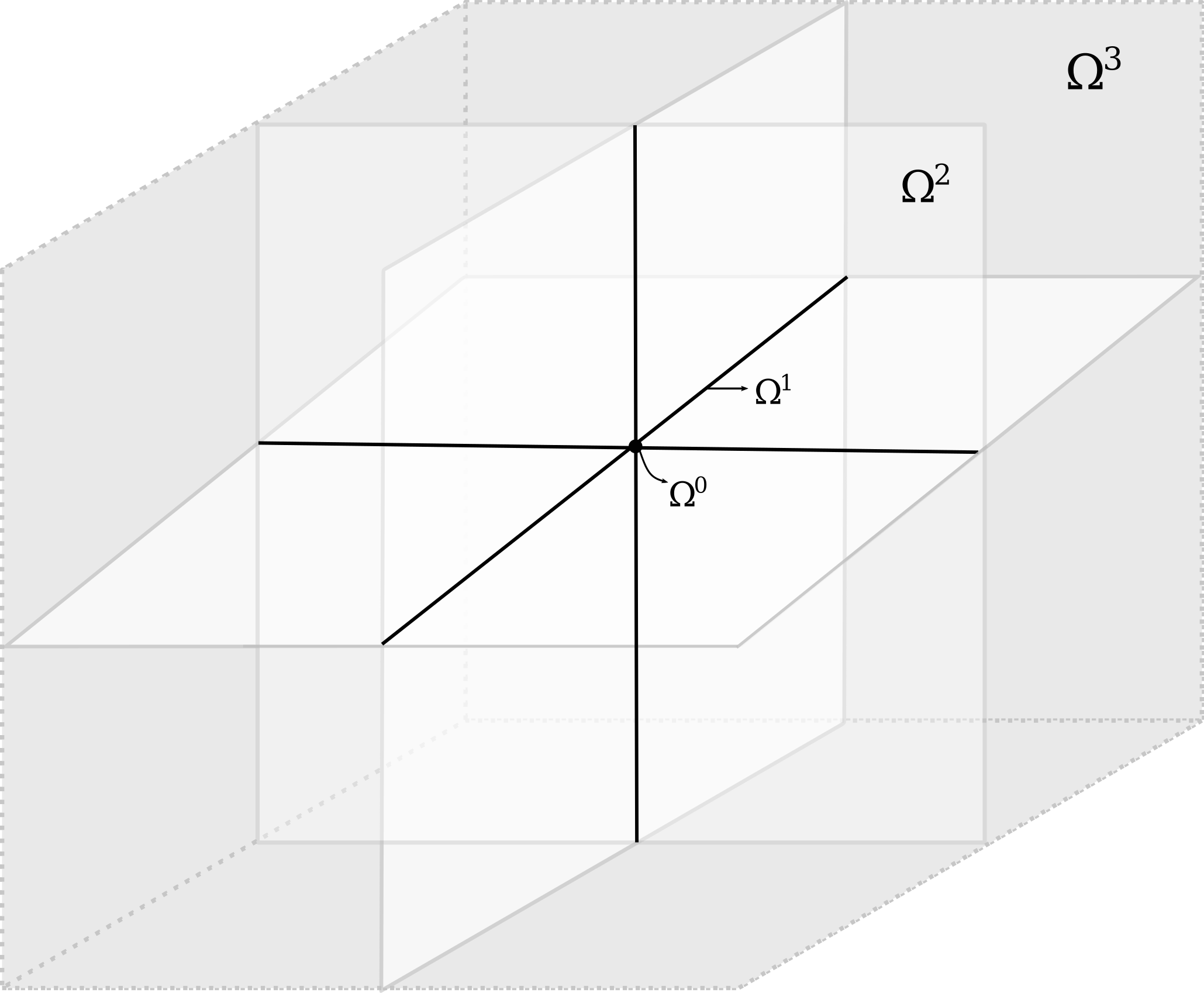}
		\caption{An illustration of the dimensional decomposition of the domain of the fractured porous media, in two (left) and three (right) dimensions. The dimension of each subdomain $ \Omega^d $ is given in the superscript $ d $. In the case of intersecting fractures, $ \Gamma^d $ is set as a union of interfaces adjacent to all subdomains $ \Omega^d $.}
		\label{sec:2.fig:geometrical_representation}
	\end{figure}
	
	\begin{remark}
		\label{sec:2.rem:geometry}
		Even though the theoretical results in \cite{boon:nordbotten:yotov, boon:nordbotten:vatne} allow for a more complex geometrical structure, for the sake of simplicity we restrict the model to domains of rectangular type. That is, we approximate fractures as lines on a plane for $ n = 2 $ or flat surfaces in a box for $ n = 3 $. However, we allow for any configuration of fractures or fracture intersections within, for example, very acute angles of fracture intersections, multiple intersecting fractures or T-type intersections.
	\end{remark}

	Now that we have set up the dimensional decomposition framework for the fractured porous medium, we introduce the governing laws in the subdomains and fractures. First, notation and properties of the physical parameters are introduced. For the sake of simplicity, we slightly abuse the notation by omitting subdomain subscripts and dimension superscripts in the following definitions. We only keep the indices in certain cases when clarification is necessary. 
	
	Assume that the boundary of $ \Omega $ can be partitioned to $ \partial\Omega = \partial\Omega_D \cup \partial\Omega_N $ such that $ \partial\Omega_D \cap \partial\Omega_N = \emptyset $ and $ \partial\Omega_D $ is of positive measure. We adopt the notation in each dimension $ 0 < d \leq n $ , that is
	\begin{align}
	\partial \Omega^d_{iD} & = \partial\Omega^d_i \cap \partial\Omega_D, \qquad \partial \Omega^d_{iN} = \partial\Omega^d_i \cap \partial\Omega_N.
	\end{align}
	
	The material permeability $ K $ and normal permeability $ K_{\bm{\nu}} $ tensors are considered to be bounded both above and below, symmetric and positive definite, and we denote $ \bm{\nu} $ as an outward unit normal on $ \Omega $.
	Furthermore, let $ \gamma^d_{ij} $ be the distance from $ \Gamma^d_{ij} $ to $ \Omega^d_i $, which for $ d = n - 1 $ represents the fracture aperture.   
	The physical parameters $ K $ and $ \gamma $ may vary spatially. However, to simplify the analysis, we assume that they are constant on each subdomain in each dimension.
	
	In each $ \Omega^d $, we introduce the governing Darcy's law and mass conservation, find fluid velocity $ \bm{u}^d $ and pressure $ p^d $ that satisfy
	\begin{subequations}
		\label{sec:2.eq:ge}
		\begin{align}
			\bm{u}^d & = - K\nabla p^d, & \text{ in } \Omega^d, \quad & 0 \leq d  \leq n, \label{sec:2.eq:ge_a} \\ 
			\nabla \cdot \bm{u}^d + \llbracket \lambda^d \rrbracket & = f^d, & \text{ in } \Omega^d, \quad  & 0 \leq d  \leq n,	\label{sec:2.eq:ge_b} 
		\end{align}
	\end{subequations}
	where we introduce an additional mortar variable $\lambda^d$, defined as 
	\begin{align}
	\label{sec:2.eq:lambda}
	\lambda^d|_{\Gamma^d_{ij}}=\lambda^{d}_{ij} &= \bm{u}^d \cdot \bm{\nu}, & \text{ on } \Gamma^{d}_{ij}, \qquad & j \in  J^{d}_{i}, \text{ } i \in I^d, \text{ } 0 \leq d  \leq n-1,
	\end{align}
	to account for the mass transfer across each interface $\Gamma^d_{ij}$,
	and a jump operator $ \llbracket \cdot \rrbracket : L^2(\Gamma^{d}) \to L^2(\Omega^{d}) $ as
	\begin{equation}
	\label{sec:2.eq:jump}
		\llbracket \lambda^d \rrbracket|_{\Omega^{d}_i} = - \sum\limits_{j \in J^{d}_{i}} \lambda^d_{ij}, \qquad  i \in I^d, \quad 0 \leq d \leq n.
	\end{equation}
	Since there is no notion of interface $ \Gamma^n $ or flow in a point $ \Omega^0 $, we extend the definion of $ \lambda^n $ and $ \bm{u}^0 $ by setting them equal to zero.
	
	An additional interface law on $ \Gamma^d_{ij} $ is introduced to describe the normal flow due to the difference in pressure from $ \Omega^d_i $ to $ \Omega^{d+1} $, 
	\begin{align}
	\label{sec:2.eq:normal_darcy}
		\lambda^d_{ij} & = -K_{\bm{\nu}} \dfrac{p^d_i - p^{d+1}|_{\Gamma^d_{ij}}}{\gamma_{ij}^d}, & \text{ on } \Gamma^{d}_{ij}, \qquad & j \in  J^{d}_{i}, \text{ } i \in I^d, \text{ } 0 \leq d  \leq n-1.
	\end{align}
	
	Finally, proper boundary conditions are needed. For example, 
	\begin{align}
		p^d & = g^d, & \text{ on } \partial\Omega^d_D, \quad  & 0 \leq d  \leq n,	\label{sec:2.eq:bc_a}\\
		\bm{u}^d \cdot \bm{\nu} & = 0, & \text{ on } \partial\Omega^d_N, \quad & 0 \leq d  \leq n.	\label{sec:2.eq:bc_b}
	\end{align}

	\begin{remark}
		\label{sec:2.rem:scaling}
		In the previous equations, we have used $ \bm{u}^d $ as integrated flux and $ p^d $ as averaged pressure in each $ \Omega^d $, $0 \leq d \leq n$. Therefore, the scaling with the cross-sectional area $ \varepsilon $ of order $ \mathcal{O}(\gamma^{n-d})$ due to the model reduction has been accounted for within the permeability parameters $ K $ and $ K_{\bm{\nu}} $.
	\end{remark}
	
		
	\subsection{Variational formulation}
	\label{sec:2.subsec:var_form}
	
	Now we consider the variational form of the problem \eqref{sec:2.eq:ge}--\eqref{sec:2.eq:bc_b}. For any open bounded set $ \omega \in \mathbb{R}^n $, let $ L^2(\omega) $ and $ H^s(\omega) $ denote the $L^2$ space and the standard Sobolev spaces on functions defined on $ \omega $, respectively. Also, denote $ H^{\frac{1}{2}}(\partial \omega) $ as the space of $ L^2 $-traces on the boundary $ \partial \omega $ of functions in $ H^1(\omega) $. Let $ ( \cdot, \cdot )_{\omega} $ be the $ L^2 $-inner product and $ \| \cdot \|_{L^2(\omega)} $ the induced $ L^2 $-norm.  We define
	\begin{align*}
	\label{sec:2.eq:spaces}
		\bm{V}^d & = \{ \bm{v} \in (L^2(\Omega^d))^d : \nabla \cdot \bm{v} \in L^2(\Omega^d), (\bm{v} \cdot \bm{\nu})|_{ \partial \Omega^d_N } = 0  \}, & 1 \leq d \leq n, \\
		\Lambda^d & = L^2(\Gamma^d), & 0 \leq d \leq n-1, \\
		Q^d & = L^2(\Omega^d), & 0 \leq d \leq n,
	\end{align*}
	where $ \bm{V}^d $ representing the flux function space on $ \Omega^d $, $ Q^d $ the pressure space on $ \Omega^d $, and $ \Lambda^d $ the function space of normal flux across interface $ \Gamma^d $. Furthermore, let $ \bm{V}^d_0 $ be a subspace of $ \bm{V}^d $ containing functions $ \bm{v}_0 $ such that $ \bm{v}_0 \cdot \bm{\nu} = 0$ on $ \Gamma^{d-1} $. In addition, define the extension operator $ R^{d} : \Lambda^{d} \to \bm{V}^{d+1} $ as
	\begin{equation}
	\label{sec:2.eq:extension_op}
		R^{d} \lambda^d \cdot \bm{\nu} = 
		\begin{cases}
			\lambda^d, & \text{ on } \Gamma^d, \\
			0, & \text{ elsewhere}.
		\end{cases}
	\end{equation}
	To summarize the formulation, we compose function spaces over dimensions
	\begin{equation}
	\label{sec:2.eq:composite_spaces}
		\bm{V} = \bigoplus\limits^{n}_{d=1} \bm{V}^d, \quad \bm{V}_0 = \bigoplus\limits^{n}_{d=1} \bm{V^d}_0, \quad \Lambda = \bigoplus\limits^{n-1}_{d=0} \Lambda^d,	\quad Q = \bigoplus\limits^{n}_{d=0} Q^d, 
	\end{equation}
	and associate composite $ L^2 $-inner products
	$$
	( \cdot, \cdot )_{\Omega} = \sum\limits^{n}_{d=0}( \cdot, \cdot )_{\Omega^d} = \sum\limits^{n}_{d=0} \sum\limits_{i \in I^d} ( \cdot, \cdot )_{\Omega^d_i},
	\quad
	( \cdot, \cdot )_{\Gamma} = \sum\limits^{n-1}_{d=0}( \cdot, \cdot )_{\Gamma^d} = \sum\limits^{n-1}_{d=0} \sum\limits_{i \in I^d} \sum\limits_{j \in J^d_i}( \cdot, \cdot )_{\Gamma^d_{ij}}.
	$$
	and induced composite $ L^2 $-norms
	$$
	\|\cdot \|^2_{ L^2(\Omega) } = \sum\limits^{n}_{d=0}\|\cdot \|^2_{ L^2(\Omega^d) }, \quad \|\cdot \|^2_{ L^2(\Gamma) } = \sum\limits^{n}_{d=0}\|\cdot \|^2_{ L^2(\Gamma^d) }
	$$
	Finally, let $ R: \Lambda \to \bm{V} $ be defined as $ R = \bigoplus\limits^{n-1}_{d=0} R^d $.
	
	The system \eqref{sec:2.eq:ge}--\eqref{sec:2.eq:bc_b} in the weak formulation reads: Find $ (\bm{u}_0, \lambda, p) \in  \bm{V}_0 \times \Lambda \times Q $ that satisfies	
	\begin{subequations}
		\label{sec:2.eq:vf}
		\begin{align}
		\left( K^{-1}(\bm{u}_0 + R\lambda), \bm{v}_0 \right)_{\Omega} -  \left(p, \nabla \cdot \bm{v}_0 \right)_{\Omega} ={} & -\left( g, \bm{v}_0 \cdot \bm{\nu} \right)_{\partial \Omega_D}, & \forall \, \bm{v}_0 \in \bm{V}_0, \label{sec:2.eq:vf_a} \\
		\left( K^{-1}(\bm{u}_0 + R\lambda), R\mu \right)_{\Omega} - \left( p, \nabla \cdot R\mu \right)_{\Omega} \nonumber \\
		+ \left( \gamma K_{\bm{\nu}}^{-1}  \lambda, \mu \right)_{\Gamma}
		-  (p, \llbracket \lambda \rrbracket)_{\Omega} ={} & 0, &  \forall \, \mu \in \Lambda, \label{sec:2.eq:vf_b} \\
		-( \nabla \cdot(\bm{u}_0 + R\lambda), q)_{\Omega}  -  (\llbracket \lambda \rrbracket, q)_{\Omega} ={} & -( f, q )_{\Omega}, & \forall \, q \in Q, \label{sec:2.eq:vf_c}
		\end{align}
	\end{subequations}
	with $ g \in H^{\frac{1}{2}}(\partial \Omega_D) $ and $ f \in L^2(\Omega) $. As before, functions $ \bm{u}^0_0, \bm{v}^0_0, \lambda^n $ and $\mu^n $ are set to zero.
	
	We end this section by observing the saddle point structure of the system \eqref{sec:2.eq:vf}. First, let $ \bm{W} = \bm{V}_0 \times \Lambda $ be the function space of all flux variables, including mortar variable, and define the mixed-dimensional divergence operator $ \bm{D} \cdot : \bm{W} \to Q$ as
	\begin{equation}
		\label{sec:2.eq:div}
		\bm{D} \cdot \bm{w} = \bm{D} \cdot [\bm{u}_0, \lambda] = \nabla \cdot \bm{u}_0 + \jump{\lambda}, \qquad \bm{w} \in \bm{W}.
	\end{equation} 	
	Define the following two bilinear forms
	\begin{subequations}
		\label{sec:2.eq:bilinear_forms}
		\begin{align}
		a(\bm{w}, \bm{r}) & = \left( K^{-1}(\bm{u}_0 + R\lambda), \bm{v}_0 +  R\mu \right)_{\Omega} + \left( \gamma K_{\bm{\nu}}^{-1}  \lambda, \mu \right)_{\Gamma}, \label{sec:2.eq:bilinear_forms_a} \\
		b(\bm{r}, p) & = -\left(p, \bm{D} \cdot [\bm{v}_0 + R\mu, \mu] \right)_{\Omega}. \label{sec:2.eq:bilinear_forms_b}
		\end{align}
	\end{subequations}
    Then the saddle point form of system \eqref{sec:2.eq:vf} reads: Find $ (\bm{w}, p) \in  \bm{W} \times Q $ such that
    \begin{subequations}
    	\label{sec:2.eq:saddle_point}
    	\begin{align}
    	a(\bm{w},\bm{r}) + b(\bm{r}, p) &= -(g, \bm{v}_0 \cdot  \bm{\nu})_{\partial \Omega_D}, & \forall \, \bm{r} \in \bm{W},  \\
    	b(\bm{w},q) \hskip 40pt &= -(f,q)_{\Omega}, & \forall \, q \in Q.
    	\end{align}
    \end{subequations}
   It has been shown in~\cite{boon:nordbotten:yotov} that the bilinear forms $ a(\cdot, \cdot) $ and $ b(\cdot, \cdot) $ are continuous with respect to the following norms for $\bm{r} = [\bm{v}_0, \mu] \in \bm{W}$ and $q \in Q$,
   \begin{subequations}
   	\label{sec:2.eq:norms}
   	\begin{align}
   	\| \bm{r} \|_{\bm{W}}^2 &  = \| K^{-\frac{1}{2}} (\bm{v}_0 + R\mu)  \|_{L^2(\Omega)}^2 + \| \gamma^{\frac{1}{2}} K_{\bm{\nu}}^{-\frac{1}{2}} \mu \|_{L^2(\Gamma)}^2 \nonumber \\
   	& \quad + \| \bm{D} \cdot [\bm{v}_0 + R\mu, \mu] \|_{L^2(\Omega)}^2, \label{sec:2.eq:norms_w} \\
   	\| q \|_Q^2 & = \| q \|_{L^2(\Omega)}^2. \label{sec:2.eq:norms_q}
   	\end{align}
   \end{subequations}
    In addition, $a(\cdot, \cdot)$ is shown to be coercive on the kernel of $b(\cdot, \cdot)$ in~\cite{boon:nordbotten:yotov} as well.  Finally, the following theorem states that $b(\cdot,\cdot)$ satisfies the inf-sup condition.
    	\begin{citedthm}[\cite{boon:nordbotten:yotov}]
    	\label{sec:2.th:infsup}
    	Let the bilinear form $ b(\cdot, \cdot) $ be defined as in \eqref{sec:2.eq:bilinear_forms_b}. Then there exists a constant $ \beta > 0 $ independent of the physical parameters $ K $, $ K_{\bm{\nu}} $ and $ \gamma $ such that
    	\begin{equation}
    	\inf\limits_{q \in Q} \sup\limits_{\bm{r} \in \bm{W}}
    	\dfrac{b(\bm{r}, q)}{\| \bm{r} \|_{\bm{W}} \| q \|_Q} \geq \beta.
    	\end{equation}
    \end{citedthm}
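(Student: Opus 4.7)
The plan is to establish the inf-sup condition by a Fortin-type argument: for each $q \in Q$ I would construct a specific $\bm{r}_q = [\bm{v}_0, \mu] \in \bm{W}$ satisfying $\bm{D} \cdot [\bm{v}_0 + R\mu, \mu] = -q$ together with a parameter-uniform bound $\|\bm{r}_q\|_{\bm{W}} \le C \|q\|_Q$. This immediately yields $b(\bm{r}_q, q) = \|q\|_Q^2 \geq (1/C)\|\bm{r}_q\|_{\bm{W}}\|q\|_Q$, and the inf-sup condition follows with $\beta = 1/C$. Because the third component of $\|\bm{r}_q\|_{\bm{W}}^2$ equals $\|q\|_Q^2$ by construction, the real task reduces to bounding $\|K^{-\frac{1}{2}}(\bm{v}_0 + R\mu)\|_{L^2(\Omega)}$ and $\|\gamma^{\frac{1}{2}} K_{\bm{\nu}}^{-\frac{1}{2}} \mu\|_{L^2(\Gamma)}$ uniformly in the physical parameters $K$, $K_{\bm{\nu}}$, $\gamma$.

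The construction would be recursive, proceeding from the top dimension $d = n$ downwards. On $\Omega^n$, I would solve a mixed Poisson problem in the weighted inner product $(K^{-1}\cdot,\cdot)_{\Omega^n}$ to obtain $\bm{u}^n$ with $\nabla \cdot \bm{u}^n = -q^n$ and $\bm{u}^n \cdot \bm{\nu} = 0$ on $\partial\Omega_N^n$; then define $\mu^{n-1} := \bm{u}^n \cdot \bm{\nu}|_{\Gamma^{n-1}}$ and $\bm{v}_0^n := \bm{u}^n - R^{n-1}\mu^{n-1} \in \bm{V}_0^n$. For each subsequent $d < n$, the source on $\Omega^d$ is augmented by the now-known jump $-\jump{\mu^d}$ passed down from dimension $d+1$, and the same weighted mixed lifting produces $\bm{u}^d$, $\mu^{d-1}$, and $\bm{v}_0^d$. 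The recursion terminates at $d = 0$ where $\bm{u}^0$ is forced to vanish; compatibility of the leftover source there needs to be verified, which follows from a telescoping global mass conservation argument through the construction.

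The hard part will be guaranteeing that the norm estimates stay robust with respect to the physical parameters. Each weighted mixed Poisson lifting naturally yields $\|K^{-\frac{1}{2}}\bm{u}^d\|_{L^2(\Omega^d)}$ controlled by the source in the appropriate weighted $L^2$ norm, with a constant depending only on a Poincaré-type constant on $\Omega^d$ — this is parameter-free because $K$ is piecewise constant on each subdomain so the weight scales homogeneously. The subtle step is the trace onto $\Gamma^{d-1}$ that defines $\mu^{d-1}$, which must be bounded in the weighted norm $\|\gamma^{\frac{1}{2}} K_{\bm{\nu}}^{-\frac{1}{2}} \cdot\|_{L^2(\Gamma^{d-1})}$; this requires a trace inequality whose constant reflects only the shape regularity and aspect ratio of $\Omega^d$ and not the parameter magnitudes. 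Verifying this cross-dimensional compatibility between the three weights (namely $K$, $K_{\bm{\nu}}$, and $\gamma$) so that no parameter-dependent factor accumulates through the recursion is the core technical content, and is carried out in full in \cite{boon:nordbotten:yotov}.
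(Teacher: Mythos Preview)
The paper does not supply its own proof of this statement: it is presented as a cited result from \cite{boon:nordbotten:yotov} and is invoked without argument, so there is nothing in the paper to compare your proposal against. Your Fortin-type recursive lifting, solving a weighted mixed Poisson problem in each dimension and passing the normal trace down as the mortar datum, is precisely the strategy used in the cited reference, and you correctly flag that the parameter-robust trace estimate across dimensions is the technical crux handled there.
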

	Following the classical Brezzi theory~\cite{Brezzi1974,BoffiBrezziFortin2013}, we conclude that the saddle point system \eqref{sec:2.eq:saddle_point} is well-posed, i.e., there exists a unique solution of \eqref{sec:2.eq:saddle_point}.

	\subsection{Discretization}
	\label{sec:2.subsec:discr}
	
	We continue this section with discretizing the problem \eqref{sec:2.eq:saddle_point} by the mixed finite element approximation.
	Let $ \mathcal{T}^d_{\Omega} $ and $ \mathcal{T}^d_{\Gamma} $ denote a d-dimensional shape-regular triangulation of $ \Omega^d $ and $ \Gamma^d $, and $ h = \max\limits_{0 \leq d \leq n} h^d  $ the characteristic mesh size parameter. Consider $ \bm{V}^d_h \subset \bm{V}^d $, $ \bm{V}^d_{0h} \subset \bm{V}^d_0 $, $ Q^d_h  \subset Q^d $ and $ \Lambda^d_h \subset \Lambda^d $ to be the lowest-order stable mixed finite element approximations on subdomain mesh $ \mathcal{T}^d_{\Omega} $ and mortar mesh $ \mathcal{T}^d_{\Gamma} $. That is, $ \bm{V}^d_h = \mathbb{RT}_0(\mathcal{T}^d_{\Omega}) $, $ \Lambda^d_h = \mathbb{P}_0(\mathcal{T}^d_{\Gamma})$ and $ Q^d_h = \mathbb{P}_0(\mathcal{T}^d_{\Omega}) $, where $ \mathbb{RT}_0 $ stands for lowest-order Raviart-Thomas(-N\'{e}d\'{e}lec) spaces \cite{nedelec, raviart:thomas} and $ \mathbb{P}_0 $ for the space of piecewise constants. Furthermore, define $ \widehat{\Pi}^d_h : \Lambda^d_h \to \bm{V}^{d+1}_h \cdot \bm{\nu}|_{\Gamma^d} $ 
		to be the $ L^2 $-projection operator such that,  for any $ \mu^d_h \in \Lambda^d_h $, 
		\begin{align}
		(  \widehat{\Pi}^d_h \mu^d_h - \mu^d_h, \bm{v} \cdot \bm{\nu} )_{L^2(\Gamma^d)} & = 0, \quad &  \forall \bm{v} \in \bm{V}^{d+1}_h. \label{sec:2.eq:projection_hat}
		\end{align}
	Then we can define the discrete extension operator $ R^d_h : \Lambda^d_h \to \bm{V}^d_h $,
	\begin{equation}
	\label{sec:2.eq:extension_op_discrete}
		R^{d}_h \lambda^d \cdot \bm{\nu} = 
		\begin{cases}
			\widehat{\Pi}^d_h \lambda^d, & \text{ on } \Gamma^d, \\
			0, & \text{ elsewhere}.
		\end{cases}
	\end{equation}
	Analogous to the continuous case, we define the discrete composite spaces
	\begin{equation}
	\label{sec:2.eq:composite_spaces_discrete}
		\bm{V}_h = \bigoplus\limits^{n}_{d=1} \bm{V}^d_h, \quad \bm{V}_{0h} = \bigoplus\limits^{n}_{d=1} \bm{V}^d_{0h}, \quad \Lambda_h = \bigoplus\limits^{n-1}_{d=0} \Lambda^d_h, \quad Q_h = \bigoplus\limits^{n}_{d=0} Q^d_h,
	\end{equation}
	and the linear operators $ \widehat{\Pi}_h = \bigoplus\limits^{n-1}_{d=0} \widehat{\Pi}^d_h $ and $ R_h = \bigoplus\limits^{n-1}_{d=0} R^d_h $.
%

	With $ \bm{W}_h = \bm{V}_{0h} \times \Lambda_h $, the finite element approximation of the system \eqref{sec:2.eq:vf} is formulated as follows: Find $ (\bm{w}_h, p_h) \in  \bm{W}_h \times Q_h $ such that,
	\begin{subequations}
		\label{sec:2.eq:saddle_point_h}
		\begin{align}
		a(\bm{w}_h,\bm{r}_h) + b(\bm{r}_h, p_h) &= -(g, \bm{v}_{0h} \cdot  \bm{\nu})_{\partial \Omega_D}, & \forall \, \bm{r}_h \in \bm{W}_h,  \\
		b(\bm{w}_h,q_h) \hskip 50pt &= -(f,q_h)_{\Omega}, & \forall \, q_h \in Q_h.
		\end{align}
	\end{subequations}

	Due to our choice of the finite element spaces, the continuity of $a(\cdot,\cdot)$ and $b(\cdot,\cdot)$ and the coercivity of $a(\cdot,\cdot)$ on the kernel of $b(\cdot,\cdot)$ are preserved naturally.  To show the well-posedness of the discrete saddle point system~\eqref{sec:2.eq:saddle_point_h}, we need the inf-sup condition to hold on the discrete spaces as well.  This has been shown in~\cite{boon:nordbotten:yotov} and is stated in the following theorem. 
	\begin{citedthm}[\cite{boon:nordbotten:yotov}]
		\label{sec:2.th:infsup_h}
		There exists a constant $ \beta > 0 $ independent of the discretization parameter $ h $ and the physical parameters $ K $, $ K_{\bm{\nu}} $ and $ \gamma $ such that
		\begin{equation}
			\label{sec:2.eq:infsup_h}
			\inf\limits_{q_h \in Q_h} \sup\limits_{\bm{r}_h \in \bm{W}_h}
			\dfrac{b(\bm{r}_h, q_h)}{\| \bm{r}_h \|_{\bm{W}_h} \| q_h \|_{Q_h}} \geq \beta.
		\end{equation}
	\end{citedthm}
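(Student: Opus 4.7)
The plan is to deduce the discrete inf-sup condition from the continuous one via Fortin's criterion. Recall that if one can exhibit a linear operator $\Pi_h : \bm{W} \to \bm{W}_h$ satisfying the commuting relation $b(\Pi_h \bm{r} - \bm{r}, q_h) = 0$ for every $q_h \in Q_h$ and the boundedness $\|\Pi_h \bm{r}\|_{\bm{W}_h} \leq C_F \|\bm{r}\|_{\bm{W}}$ with $C_F$ independent of $h$ and the physical parameters $K$, $K_{\bm{\nu}}$, $\gamma$, then the discrete inf-sup constant is at least $\beta/C_F$, where $\beta$ is the continuous inf-sup constant from Theorem~\ref{sec:2.th:infsup}. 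Hence the entire argument reduces to designing such a Fortin operator.

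I would construct $\Pi_h$ componentwise from two standard ingredients. For a given $\bm{r} = [\bm{v}_0, \mu] \in \bm{W}$, set $\mu_h = P_h^{\Lambda} \mu$, where $P_h^{\Lambda} : \Lambda \to \Lambda_h$ is the $L^2$-projection onto piecewise constants on the mortar mesh. On each subdomain $\Omega^d$, apply the classical Raviart-Thomas Fortin interpolant $\Pi_h^{\mathbb{RT}} : \bm{V}^d \to \bm{V}^d_h$ to the full flux $\bm{v}_0 + R\mu$; this operator satisfies the commuting diagram $\nabla \cdot \Pi_h^{\mathbb{RT}} \bm{v} = P_h^{Q} (\nabla \cdot \bm{v})$ and preserves facewise normal traces in the sense that $(\Pi_h^{\mathbb{RT}} \bm{v}) \cdot \bm{\nu}|_{\Gamma^d} = \widehat{\Pi}_h^d(\bm{v} \cdot \bm{\nu})$. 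Define $\bm{v}_{0h} := \Pi_h^{\mathbb{RT}}(\bm{v}_0 + R\mu) - R_h \mu_h$ and $\Pi_h \bm{r} := [\bm{v}_{0h}, \mu_h]$. Using \eqref{sec:2.eq:projection_hat}, one checks that $\bm{v}_{0h}$ has vanishing normal trace on $\Gamma^{d-1}$, hence $\bm{v}_{0h} \in \bm{V}_{0h}$.

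Commutation is then immediate: the total flux $\bm{v}_{0h} + R_h \mu_h = \Pi_h^{\mathbb{RT}}(\bm{v}_0 + R\mu)$ by construction, so $\nabla\cdot(\bm{v}_{0h} + R_h \mu_h) = P_h^{Q}\nabla\cdot(\bm{v}_0 + R\mu)$, while $\jump{P_h^{\Lambda}\mu}$ equals $P_h^{Q}\jump{\mu}$ on each $\Omega^d$ since $Q_h$ is piecewise constant and the jump is a finite sum over adjacent interfaces. Hence $\bm{D}\cdot(\Pi_h\bm{r})$ and $\bm{D}\cdot\bm{r}$ have equal $L^2$-projections onto $Q_h$, giving the Fortin commutation. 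For boundedness, the weighted flux norm $\|K^{-1/2}(\bm{v}_{0h} + R_h\mu_h)\|_{L^2(\Omega)}$ reduces via the identity above to the $L^2$-stability of $\Pi_h^{\mathbb{RT}}$ applied to $K^{-1/2}(\bm{v}_0 + R\mu)$, which holds with shape-regularity constants since $K$ is piecewise constant on subdomains; similarly, $L^2$-stability of $P_h^{\Lambda}$ handles the mortar term $\|\gamma^{1/2} K_{\bm{\nu}}^{-1/2}\mu_h\|_{L^2(\Gamma)}$, and the divergence term is controlled directly by the commutation identity together with $L^2$-stability of $P_h^{Q}$.

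The main obstacle is the careful bookkeeping between the continuous extension $R$ and the discrete extension $R_h$, which differ by the intermediate projection $\widehat{\Pi}_h^d$ on each interface. The trick that makes everything work is to apply the RT Fortin operator to the combined flux $\bm{v}_0 + R\mu$ rather than to $\bm{v}_0$ alone, so that the normal-trace preservation of $\Pi_h^{\mathbb{RT}}$ together with \eqref{sec:2.eq:projection_hat} produces exactly the normal trace generated by $R_h P_h^{\Lambda}\mu$, thereby keeping the correction $\bm{v}_{0h}$ inside $\bm{V}_{0h}$. A secondary delicate point is obtaining $L^2$-boundedness of $\Pi_h^{\mathbb{RT}}$ on functions with only $L^2$-regular divergence; this is handled by the standard Clément-type modification of the RT interpolant, which introduces no dependence on the physical parameters.
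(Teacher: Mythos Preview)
The paper does not prove this theorem itself; it is quoted verbatim from \cite{boon:nordbotten:yotov} and only invoked to ensure well-posedness of~\eqref{sec:2.eq:saddle_point_h}. Your Fortin-operator strategy is the standard route and is essentially what the cited reference does: interpolate the combined flux with a commuting Raviart--Thomas projector and the mortar variable with an $L^2$-projection, then check commutation and stability. The trick of applying $\Pi_h^{\mathbb{RT}}$ to $\bm{v}_0+R\mu$ rather than $\bm{v}_0$ alone is exactly the right idea, and your remark on the Cl\'ement-type modification correctly handles the regularity issue for $H(\div)$ functions.

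There is, however, one step that is looser than it looks. The claim that $\jump{P_h^{\Lambda}\mu}$ and $\jump{\mu}$ have the same $L^2$-projection onto $Q_h$ is not automatic: it requires that every element of the lower-dimensional subdomain mesh $\mathcal{T}^d_{\Omega}$, when identified with the geometrically coincident interface $\Gamma^d$, be a union of mortar elements of $\mathcal{T}^d_{\Gamma}$ (or, more generally, that the mortar projection preserve elementwise averages on $\mathcal{T}^d_{\Omega}$). In the matching-grid setting used in the numerical experiments this is trivially true, but for general non-matching mortar meshes you need an explicit compatibility hypothesis between $\mathcal{T}^d_{\Gamma}$ and $\mathcal{T}^d_{\Omega}$; the cited reference imposes precisely such a condition. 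Without it the jump commutation fails and the Fortin identity $b(\Pi_h\bm{r}-\bm{r},q_h)=0$ does not hold. A second minor point: writing that the weighted bound ``reduces to the $L^2$-stability of $\Pi_h^{\mathbb{RT}}$ applied to $K^{-1/2}(\bm{v}_0+R\mu)$'' is not quite right, since $\Pi_h^{\mathbb{RT}}$ does not act on $K^{-1/2}$ times the flux; what you actually use is that $K$ is elementwise constant so the weight commutes through the local stability estimate of $\Pi_h^{\mathbb{RT}}$.
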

	Therefore, the finite element method~\eqref{sec:2.eq:saddle_point_h} is well-posed by the Brezzi theory~\cite{Brezzi1974,BoffiBrezziFortin2013}.
	
	We finalize this section with the block formulation of the discrete saddle point system \eqref{sec:2.eq:saddle_point_h}. Let linear operators $A: \bm{W}_h \to \bm{W}_h' $ and $B: \bm{W}_h \to Q_h'$ be defined as $ \langle A \bm{w}_h, \bm{r}_h \rangle = a(\bm{w}_h, \bm{r}_h) $ and $ \langle B \bm{r}_h, p_h \rangle = b(\bm{r}_h, p_h)  $, respectively. Here $\bm{W}_h'$ and $Q_h'$ denote the dual spaces of $\bm{W}_h$ and $Q_h$, respectively, and $\langle \cdot, \cdot \rangle$ denotes the duality pairing.  Then \eqref{sec:2.eq:saddle_point_h} is equivalent to the following operator form,
	\begin{equation}
		\label{sec:2.eq:block_form_h}
		\mathcal{A}
		\begin{pmatrix}
			\bm{w}_h \\ p_h
		\end{pmatrix}
		=
		\begin{pmatrix}
		G \\ F
		\end{pmatrix}
		\quad \text{ with } 
		\mathcal{A} = 
		\begin{pmatrix}
		 A & B^T \\
		 -B & 0
		\end{pmatrix},
	\end{equation}	
	with $ G([\bm{v}_{0h}, \lambda_h]) \coloneqq - ( g, \bm{v}_{0h} \cdot \bm{\nu} )_{\partial \Omega_D} $ and $ F(q_h) \coloneqq ( f, q_h )_{\Omega} $. 
	
	The well-posedness of the system~\eqref{sec:2.eq:saddle_point_h} ensures that $ \mathcal{A} $ is an isomorphism from $ \bm{W}_h \times Q_h $ to its dual $\bm{W}_h' \times Q_h'$ and, therefore, \eqref{sec:2.eq:block_form_h} has a unique solution $ (\bm{w}_h, p_h) \in \bm{W}_h \times Q_h $.

	\section{Block preconditioners}
\label{sec:precond}


	In this section, we briefly present the general preconditioning theory for designing block preconditioners of Krylov subspace iterative
	methods~\cite{loghin:wathen,mardal:winther}, which introduces necessary tools for the analysis in the following section. 

	The block preconditioning framework~\cite{loghin:wathen,mardal:winther} is based on the well-posedness theory.  Therefore, we first introduce the setup of the problem.  Let $ \bm{X} $ be a real separable Hilbert space and $ (\cdot, \cdot)_{\bm{X}} $ represent the inner product on $ \bm{X} $ that induces the norm $ \| \cdot \|_{\bm{X}} $.
	Furthermore, denote $ \bm{X}' $ as a dual space to $ \bm{X} $ and $ \langle \cdot, \cdot \rangle $ as the duality pairing between them. Let $ \mathcal{L}(\cdot, \cdot) $ be a bilinear form on $ \bm{X} $ that satisfies the continuity condition and the inf-sup condition,
	\begin{equation}
		\label{sec:3.eq:bilin_form}
		\inf\limits_{\bm{x} \in \bm{X}} \sup\limits_{\bm{y} \in \bm{X}}
		\dfrac{ \mathcal{L}(\bm{x}, \bm{y}) }{ \| \bm{x} \|_{\bm{X}} \| \bm{y} \|_{\bm{X}} } \geq \beta \quad \text{and} \quad 
		| \mathcal{L}(\bm{x}, \bm{y}) | \leq \alpha \| \bm{x} \|_{\bm{X}} \| \bm{y} \|_{\bm{X}}, \; \forall \bm{x}, \bm{y} \in \bm{X},
	\end{equation}
	for $ \alpha, \beta > 0 $. We aim to construct a robust preconditioner for the linear system
	\begin{equation}
		\label{sec:3.eq:lin_sys}
		\mathcal{A} \bm{x} = \bm{b},
	\end{equation}
	where $ \mathcal{A} : \bm{X} \to \bm{X}' $ is induced by the bilinear form $ \mathcal{L}(\cdot, \cdot) $ such that $ \langle \mathcal{A} \bm{x}, \bm{y} \rangle = \mathcal{L}(\bm{x}, \bm{y}) $. The properties of the bilinear form ensure that $ \mathcal{A} $ is a bounded and symmetric linear operator and the system~\eqref{sec:3.eq:lin_sys} is well-posed.  Our goal is to develop block preconditioners for solving~\eqref{sec:3.eq:lin_sys}.


\subsection{Norm-equivalent Preconditioner}
\label{sec:3.subsec:norm-equiv}

	Consider a symmetric positive definite operator $\mathcal{M}: \bm{X}' \to \bm{X}$ which induces an inner product $(\bm{x},\bm{y})_{\mathcal{M}^{-1}}:=\langle \mathcal{M}^{-1}\bm{x},\bm{y}\rangle$ on $\bm{X}$ and corresponding norm $\| \bm{x} \|^2_{\mathcal{M}^{-1}}:=(\bm{x},\bm{x})_{\mathcal{M}^{-1}}$. Naturally, $\mathcal{MA}:\bm{X} \to \bm{X}$ is symmetric with respect to $(\cdot,\cdot)_{\mathcal{M}^{-1}}$ and we can use $\mathcal{M}$ as a preconditioner for the MINRES algorithm whose convergence rate is stated in the following theorem.
	\begin{theorem}[\cite{greenbaum1997}]
		Let $\bm{x}^m$ be the $m$-th iteration of the MINRES method preconditioned with $\mathcal{M}$ and $\bm{x}$ be the exact solution, it follows that
		\begin{equation*}
			\| \mathcal{A} (\bm{x} - \bm{x}^m) \|_{\mathcal{M}} \leq 2 \rho^m \| \mathcal{A}(\bm{x} - \bm{x}^0) \|_{\mathcal{M}},
		\end{equation*}
		where $\rho = \frac{\kappa(\mathcal{MA})-1}{\kappa(\mathcal{MA})+1}$ and $\kappa(\mathcal{MA})$ denotes the condition number of $\mathcal{MA}$. 
	\end{theorem}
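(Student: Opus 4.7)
The plan is to follow the classical polynomial-approximation analysis of MINRES, exploiting the fact that $\mathcal{T}:=\mathcal{MA}$ is self-adjoint with respect to the $\mathcal{M}^{-1}$-inner product on $\bm{X}$. First I would recall the optimality characterization of MINRES: writing $\bm{r}^0=\bm{b}-\mathcal{A}\bm{x}^0$, the iterate $\bm{x}^m$ lies in the affine Krylov space $\bm{x}^0+\mathcal{K}_m(\mathcal{T},\mathcal{M}\bm{r}^0)$ and is chosen to minimize $\|\mathcal{A}(\bm{x}-\bm{y})\|_{\mathcal{M}}$ over all $\bm{y}$ in this affine set. Consequently the error admits the representation $\bm{x}-\bm{x}^m=P_m(\mathcal{T})(\bm{x}-\bm{x}^0)$ for some polynomial $P_m$ of degree at most $m$ with $P_m(0)=1$, where $P_m$ is the optimizer.

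Next I would invoke the spectral theorem for $\mathcal{T}$ in the $\mathcal{M}^{-1}$-inner product. Let $\{\phi_i\}$ be an orthonormal eigenbasis with real eigenvalues $\{\lambda_i\}\subset \sigma(\mathcal{T})$ and expand $\bm{x}-\bm{x}^0=\sum_i c_i\phi_i$. Using $\mathcal{A}\phi_i=\lambda_i\mathcal{M}^{-1}\phi_i$ and $\langle\mathcal{M}^{-1}\phi_i,\phi_j\rangle=\delta_{ij}$, a direct computation gives
\[
\|\mathcal{A}(\bm{x}-\bm{x}^m)\|_\mathcal{M}^2=\sum_i c_i^2\lambda_i^2 P_m(\lambda_i)^2,\qquad \|\mathcal{A}(\bm{x}-\bm{x}^0)\|_\mathcal{M}^2=\sum_i c_i^2\lambda_i^2,
\]
so the ratio is bounded by $\max_i|P_m(\lambda_i)|^2$. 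Since MINRES selects the optimal $P_m$, the convergence estimate reduces to the standard uniform polynomial approximation problem
\[
\min_{\substack{\deg P\leq m\\ P(0)=1}}\ \max_{\lambda\in\sigma(\mathcal{T})}|P(\lambda)|.
\]

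Finally, inserting a suitably scaled and shifted Chebyshev polynomial on the smallest interval (or symmetric union of intervals) containing $\sigma(\mathcal{T})$ yields the stated bound $2\rho^m$ with $\rho=(\kappa(\mathcal{T})-1)/(\kappa(\mathcal{T})+1)$. The main technical point is tracking the Chebyshev constants—specifically, controlling $1/T_m\bigl((\lambda_{\max}+\lambda_{\min})/(\lambda_{\max}-\lambda_{\min})\bigr)$ from above by $2\rho^m$—and, should $\mathcal{T}$ turn out to be indefinite, replacing the single interval by an appropriate symmetric union and taking $P$ to be an even polynomial in a rescaled variable. As this is the classical theorem of Paige--Saunders sharpened by Greenbaum, I would appeal to the cited reference for the full constant-tracking argument rather than reproduce it.
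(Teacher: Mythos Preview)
The paper does not supply its own proof of this theorem; it is quoted verbatim as a cited result from Greenbaum and left unproved. Your sketch is the standard polynomial-optimality/Chebyshev argument one finds in that reference, so there is no methodological discrepancy to report---if anything, you have provided more detail than the paper, which simply appeals to \cite{greenbaum1997}.
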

	As shown in~\cite{mardal:winther}, if \eqref{sec:3.eq:bilin_form} holds and $\mathcal{M}$ satisfies,
	\begin{equation}
		\label{sec:3.eq:norm-equiv}
		c_1 \| \bm{x} \|^2_{\bm{X}} \leq \| \bm{x} \|^2_{\mathcal{M}^{-1}} \leq c_2 \| \bm{x} \|^2_{\bm{X}},
	\end{equation}
	then $\mathcal{A}$ and $\mathcal{M}$ are called \emph{norm-equivalent} and $\kappa(\mathcal{MA}) \leq \frac{c_2 \alpha }{c_1 \beta }$.  Thus, if the well-posedness constants $\alpha$ and $\beta$ and the norm-equivalence constants $c_1$ and $c_2$ are all independent of the physical and discretization parameters, then $\mathcal{M}$ provides a robust preconditioner.  

	One natural choice of the norm-equivalent preconditioner is the \emph{Riesz operator} $ \mathcal{B} : \bm{X}' \to \bm{X} $ corresponding to the inner product $ (\cdot, \cdot)_{\bm{X}} $
	\begin{equation}
		\label{sec:3.eq:riesz_map}
		(\mathcal{B} \bm{f}, \bm{x})_{\bm{X}} =  \langle \bm{f}, \bm{x} \rangle, \qquad \forall \bm{f} \in \bm{X}', \; \bm{x} \in \bm{X}.
	\end{equation}	
	It is easy to see that if we choose $\mathcal{M} = \mathcal{B}$, then~\eqref{sec:3.eq:norm-equiv} holds with constants $c_1=c_2=1$ and, therefore,   
	the preconditioned system 
	\begin{equation}
		\label{sec:3.eq:precond_sys}
		\mathcal{B} \mathcal{A} \bm{x} = \mathcal{B} \bm{b}
	\end{equation}
	has a bounded condition number
	\begin{equation}
		\label{sec:3.eq:cond_no}
		\kappa(\mathcal{B} \mathcal{A}) = \| \mathcal{B} \mathcal{A} \|_{\mathscr{L}(\bm{X}, \bm{X})} \| (\mathcal{B} \mathcal{A})^{-1} \|_{\mathscr{L}(\bm{X}, \bm{X})} \leq \dfrac{\alpha}{\beta}.
	\end{equation}
	If the constants $ \alpha $ and $ \beta $ are independent of the discretization and physical parameters, we obtain a robust preconditioner. 


\subsection{Field-of-values-equivalent Preconditioner}
	\label{sec:3.subsec:FOV-equiv}
	In this section, we recall the class of field-of-values-equivalent (FOV-equivalent) preconditioners which allow more general preconditioners than the norm-equivalent ones.  

	Consider a general operator $\mathcal{M}_L:\bm{X}' \to \bm{X}$ which can be used as a preconditioner for the GMRES method.  The following theorem, developed in \cite{eisenstat:elman:schultz, elman:1982}, characterizes the convergence rate of the GMRES method.
	\begin{theorem}[\cite{eisenstat:elman:schultz, elman:1982}]
		Let $\bm{x}^m$ be the $m$-th iteration of the GMRES method preconditioner with $\mathcal{M}_L$ and $\bm{x}$ be the exact solution, it follows that
		\begin{equation*}
			\| \mathcal{M}_L\mathcal{A}(\bm{x}-\bm{x}^m)\|^2_{\mathcal{M}^{-1}} \leq \left( 1 - \frac{\Sigma^2}{\Upsilon^2}\right) \| \mathcal{M}_L \mathcal{A} (\bm{x} - \bm{x}^0) \|^2_{\mathcal{M}^{-1}},
		\end{equation*}
		where, for any $\bm{x} \in \bm{X}$,
		\begin{equation*}
			\Sigma \leq \frac{(\mathcal{M}_L\mathcal{A}\bm{x},\bm{x})_{\mathcal{M}^{-1}}}{(\bm{x},\bm{x})_{\mathcal{M}^{-1}}}, \qquad \frac{\| \mathcal{M}_L \mathcal{A} \bm{x}\|_{\mathcal{M}^{-1}}}{\| \bm{x}\|_{\mathcal{M}^{-1}}} \leq \Upsilon. 
		\end{equation*}
	\end{theorem}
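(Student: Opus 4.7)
The plan is to exploit the residual-minimization property of GMRES together with the two field-of-values constants. Writing $\mathcal{T}:=\mathcal{M}_L\mathcal{A}$ and denoting the preconditioned residual at step $k$ by $\bm{r}^k := \mathcal{M}_L\mathcal{A}(\bm{x}-\bm{x}^k)$, the GMRES iterate obeys the polynomial optimality principle
\[
\|\bm{r}^m\|_{\mathcal{M}^{-1}} \;=\; \min_{\substack{p\in\mathcal{P}_m\\ p(0)=1}} \|p(\mathcal{T})\bm{r}^0\|_{\mathcal{M}^{-1}},
\]
so every admissible polynomial furnishes an upper bound. First, I would insert the degree-one polynomial $p_\eta(\lambda)=1-\eta\lambda$, composed (if $m>1$) with the optimal polynomial from step $m-1$; this reduces the task to bounding the single-step contraction $\|(I-\eta\mathcal{T})\bm{y}\|_{\mathcal{M}^{-1}}/\|\bm{y}\|_{\mathcal{M}^{-1}}$ uniformly over $\bm{y}\in\bm{X}$.

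Next, I would square the numerator and expand in the $\mathcal{M}^{-1}$ inner product:
\[
\|(I-\eta\mathcal{T})\bm{y}\|^2_{\mathcal{M}^{-1}} \;=\; \|\bm{y}\|^2_{\mathcal{M}^{-1}} - 2\eta\,(\mathcal{T}\bm{y},\bm{y})_{\mathcal{M}^{-1}} + \eta^2\|\mathcal{T}\bm{y}\|^2_{\mathcal{M}^{-1}}.
\]
The lower bound in the hypothesis on $\Sigma$ controls the mixed term and the upper bound in the hypothesis on $\Upsilon$ controls the last term, so the ratio is at most $1 - 2\eta\Sigma + \eta^2\Upsilon^2$. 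This quadratic in $\eta$ is minimized at $\eta_\star = \Sigma/\Upsilon^2$ with minimum value $1 - \Sigma^2/\Upsilon^2$, and chaining the one-step estimate (or equivalently taking the $m$-th power of the linear polynomial $p_{\eta_\star}$ in the first reduction) produces the stated bound.

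The subtle point I expect to be the main obstacle is verifying that the factor $1-\Sigma^2/\Upsilon^2$ really lies in $[0,1)$: positivity of $\Sigma$ must hold, i.e.\ the symmetric part of $\mathcal{M}_L\mathcal{A}$ in the $\mathcal{M}^{-1}$ inner product must be coercive (this is where the FOV-equivalence hypothesis on the preconditioner enters), while the inequality $\Sigma\le\Upsilon$ follows from Cauchy--Schwarz applied to the ratios defining the two constants. Crucially, the argument never requires $\mathcal{T}$ to be self-adjoint with respect to $(\cdot,\cdot)_{\mathcal{M}^{-1}}$, which is precisely why this framework accommodates non-symmetric (e.g.\ block triangular) preconditioners that lie outside the norm-equivalent/MINRES setting of the previous subsection.
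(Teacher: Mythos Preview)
The paper does not supply its own proof of this statement: the theorem is quoted verbatim as a classical result of Eisenstat, Elman and Schultz and of Elman's thesis, and no argument is given beyond the citation. There is therefore nothing in the paper to compare your proof against.

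That said, your sketch is correct and is precisely the standard Elman argument found in those references: the GMRES optimality principle reduces the task to bounding $\|I-\eta\mathcal{T}\|_{\mathcal{M}^{-1}}$, the expansion you wrote together with the definitions of $\Sigma$ and $\Upsilon$ gives the quadratic $1-2\eta\Sigma+\eta^2\Upsilon^2$, and optimising in $\eta$ yields the contraction factor $1-\Sigma^2/\Upsilon^2$. Your remark that $\Sigma\le\Upsilon$ follows from Cauchy--Schwarz and that positivity of $\Sigma$ is exactly the FOV-coercivity hypothesis is also to the point.

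One small observation: your chaining step actually delivers the stronger bound with the factor $\bigl(1-\Sigma^2/\Upsilon^2\bigr)^{m}$ on the right-hand side, which is the form in which the Elman estimate is usually stated. The version printed in the paper omits the exponent $m$; this weaker inequality follows immediately from your one-step estimate together with the monotonicity of the GMRES residual, so your argument covers the statement as written in any case.
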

	$\mathcal{M}_L$ is referred to as an FOV-equivalent preconditioner if the constants $\Sigma$ and $\Upsilon$ are independent of the physical and discretization parameters.  Usually $\mathcal{M}_L$ provides a uniform left preconditioner for GMRES.   

	In a similar manner, we can introduce a right preconditioner for GMRES, $\mathcal{M}_U:\bm{X}' \to \bm{X}$ and consider the preconditioned system
	\begin{equation*}
		\mathcal{A} \mathcal{M}_U \bm{y} = \bm{b}, \qquad \bm{x} = \mathcal{M}_U \bm{y}.
	\end{equation*}
	By introducing an inner product on $\bm{X}'$, defined as $(\bm{x}', \bm{y}')_{\mathcal{M}}:= \langle \bm{x}', \mathcal{M}\bm{y'} \rangle$, we say $\mathcal{M}_U$ and $\mathcal{A}$ are FOV-equivalent if, for any $\bm{x}' \in \bm{X}'$, 
	\begin{equation*}
		\Sigma \leq \frac{(\mathcal{A}\mathcal{M}_U\bm{x}',\bm{x}')_{\mathcal{M}}}{(\bm{x}',\bm{x}')_{\mathcal{M}}}, \qquad \frac{\| \mathcal{A}\mathcal{M}_U \bm{x}'\|_{\mathcal{M}}}{\| \bm{x}'\|_{\mathcal{M}}} \leq \Upsilon, 
	\end{equation*}
	where the constants $\Sigma$ and $\Upsilon$ are independent of the physical and discretization parameters. Therefore, $\mathcal{M}_U$ can be used as a uniform right preconditioner for GMRES.

	In many cases~\cite{loghin:wathen,alder:hu:rodrigo:zikatanov,adler:gaspar:hu:ohm:rodrigo:zikatanov}, the FOV-equivalent preconditioners can be derived based on the Riesz operator and the FOV-equivalence can be shown based on the well-posedness conditions \eqref{sec:3.eq:bilin_form}.

	\section{Robust Preconditioners for Mixed-dimensional Model}
	\label{sec:precond_MD}

	In this section, we design block preconditioners based on the general framework mentioned in the previous section.  Consider the finite element approximation \eqref{sec:2.eq:saddle_point_h}.  In this case, define $ \bm{X} = \bm{W}_h \times Q_h $ associated with the following norm 
	\begin{equation}
		\label{sec:4.eq:norm_x}
		\| \bm{y} \|^2_{\bm{X}} =  \| (\bm{r}_h, q_h) \|^2_{\bm{X}} = \| \bm{r}_h \|^2_{\bm{W}} + \| q_h \|^2_{Q}.
	\end{equation}
	Then, the operator $ \mathcal{A} : \bm{X} \to \bm{X}' $ in \eqref{sec:2.eq:block_form_h} is induced by the bilinear form
	\begin{equation}
		\label{sec:3.eq:bilin_A}
		\mathcal{L} (\bm{x}, \bm{y}) = a(\bm{w}_h, \bm{r}_h) +	b(\bm{r}_h, p_h) - b(\bm{w}_h, q_h), 
	\end{equation}
	and satisfies the well-posedness conditions \eqref{sec:3.eq:bilin_form} due to Theorem \ref{sec:2.th:infsup_h}, the continuity of the bilinear forms $ a(\cdot, \cdot) $ and $  b(\cdot, \cdot) $, and the coercivity of $a(\cdot,\cdot)$ on the kernal of $b(\cdot,\cdot)$. Moreover, the constants $ \alpha $ and $ \beta $ are independent of parameters $ h $, $ K $, $ K_{\bm{\nu}} $ and $ \gamma $. 
	
	The Riesz operator corresponding to the norm $ \| \cdot \|_{\bm{X}} $ in \eqref{sec:4.eq:norm_x} is
	\begin{equation}
		\label{sec:4.eq:canonical_precond_blockform}
		\mathcal{B} =
		\begin{pmatrix}
			A + B^T B & 0\\
			0 & I_p
		\end{pmatrix}^{-1},
	\end{equation}
	where $A$ and $B$ are defined as in~\eqref{sec:2.eq:block_form_h} and $I_p$ is the identity operator on $Q$, i.e., $\langle I_p q_h, q_h \rangle = \| q_h \|^2_Q$.
	The main challenge in implementation of this preconditioner is to solve for the upper block $ A + B^T B $ that corresponds to $ I + \grad \, \div $ problem. One way of resolving this is to use auxiliary space theory (see for example \cite{hiptmair:xu, kolev:vassilevski}). However, in our case, additional theory resulting from the mixed-dimensional exterior calculus in \cite{boon:nordbotten:vatne} is needed, which is the topic of our ongoing work~\cite{boon:budisa:hu}. However, in this paper, we consider an alternative formulation of the problem \eqref{sec:2.eq:saddle_point_h} and show the well-posedness with respect to a different weighted norm, which allows for a simpler robust preconditioner.

	\subsection{An Alternative Formulation}
	\label{sec:4.subsec:alter_form}


	In order to introduce the alternative formulation, we need to define a discrete gradient operator $ \bm{D}_h: Q_h \to \bm{W}_h$ such that, for $\bm{r}_h = [\bm{v}_{0h}, \mu_h]$, 
	\begin{equation}
	\label{sec:4.eq:D_h}
		a^D( \bm{D}_h p_h, \bm{r}_h ) = b(\bm{r}_h, p_h) = -\left(p_h, \bm{D} \cdot [\bm{v}_{0h} + R_h\mu_h, \mu_h] \right)_{\Omega},
	\end{equation}
	where, for $\bm{w}_h = [\bm{u}_{0h},\lambda_h]$ and $\bm{r}_h = [\bm{v}_{0h}, \mu_h]$,  
	\begin{equation*}
		a^D(\bm{w}_h, \bm{r}_h):= (K^{-1}(\bm{u}_{0h}+ R_h \lambda_h), \bm{v}_{0h} + R_h \mu_h)_{D, \Omega} + (\gamma K^{-1}_{\bm{\nu}} \lambda_h, \mu_h)_{\Gamma},
	\end{equation*}
	with
	\begin{align*}
		&(K^{-1}(\bm{u}_{0h}+ R_h \lambda_h), \bm{v}_{0h} + R_h \mu_h)_{D,\Omega}:= \\
		& \sum_{d=0}^n \left\{ \sum_{T^d\in \mathcal{T}^d_{\Omega}}\left[ \sum_{f^d \in \partial T^d} ((\bm{u}_{0h}+ R_h \lambda_h)\cdot \bm{\nu}_{f^d}) ((\bm{v}_{0h}+ R_h \mu_h)\cdot \bm{\nu}_{f^d}) (K^{-1} \bm{\phi}_{f^d}, \bm{\phi}_{f^d})_{T^d}  \right]  \right\}.
	\end{align*}
	Here $ T^d \in \mathcal{T}^d_{\Omega} $ is either a tetrahedron for $ d = 3 $, a triangle for $ d = 2 $ or a line segment for $ d = 1 $. Furthermore, $ f^d \in \partial T $ corresponds to a face of the element $ T^d $, $\bm{\nu}_{f^d}$ denotes the unit outer normal of face $f^d$, and $ \bm{\phi}_{f^d} \in \mathbb{RT}_0(T^d) $ is the basis function on face $ f^d $.  Using the discrete gradient operator, an alternative formulation of the system~\eqref{sec:2.eq:saddle_point_h} is given as follows: Find $ (\bm{w}_h, p_h) \in  \bm{W}_h \times Q_h $ such that,
	\begin{subequations}
		\label{sec:4.eq:alter-saddle_point_h}
		\begin{align}
		a(\bm{w}_h,\bm{r}_h) + a^D(\bm{D}_hp_h, \bm{r}_h) &= -(g, \bm{v}_{0h} \cdot  \bm{\nu})_{\partial \Omega_D}, & \forall \, \bm{r}_h \in \bm{W}_h,  \\
		a^D(\bm{D}_hq_h, \bm{w}_h) \hskip 50pt &= -(f,q_h)_{\Omega}, & \forall \, q_h \in Q_h.
	\end{align}
	\end{subequations}
	The well-posedness of the alternative formulation~\eqref{sec:4.eq:alter-saddle_point_h} with respect to the norm \eqref{sec:4.eq:norm_x} follows directly from the well-posedness of the original formulation~\eqref{sec:2.eq:saddle_point_h} because the two formulations are equivalent.  However, in order to derive a block preconditioner different from~\eqref{sec:4.eq:canonical_precond_blockform}, we shall consider the same coefficient operator $\mathcal{A}$~\eqref{sec:2.eq:block_form_h} with a different weak interpretation and the well-posedness in a different setting. 

	The alternative weighted norm we consider for the alternative formulation~\eqref{sec:4.eq:alter-saddle_point_h} is defined as
	\begin{equation} 
	\label{sec:4.eq:alter_norm}
		\vertiii{(\bm{r}_h, q_h)}^2 := \| \bm{r}_h \|_a^2 +  \| \bm{D}_h q_h \|_{a^D}^2,
	\end{equation}
	where $\| \bm{r}_h \|^2_a := a(\bm{r}_h, \bm{r}_h) $ and $\| \bm{r}_h \|^2_{a^D} := a^D(\bm{r}_h, \bm{r}_h)$.  In order to show~\eqref{sec:4.eq:alter-saddle_point_h} (or the operator form~\eqref{sec:2.eq:block_form_h}) is well-posed with respect to this alternative norm~\eqref{sec:4.eq:alter_norm}, we need the following two lemmas.  The first lemma shows that the forms $a(\cdot,\cdot)$ and $a^D(\cdot, \cdot)$ are spectrally equivalent.    

%

	\begin{lemma}
		\label{sec:4.lemma:norm_equi}
		There exist constants $ c_1, c_2 > 0 $, depending only on the shape regularity of the mesh $ \mathcal{T}_{\Omega} $, such that the following inequalities hold,
		\begin{equation}
			\label{sec:4.eq:norm_equi}
			c_1 \| \bm{r}_h \|_{a^D} \leq \| \bm{r}_h \|_{a} \leq c_2 \| \bm{r}_h \|_{a^D}, \qquad \forall \ \bm{r}_h \in \bm{W}_h.
		\end{equation}
	\end{lemma}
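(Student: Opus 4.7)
The plan is to reduce \eqref{sec:4.eq:norm_equi} to a classical element-local spectral equivalence between the $\mathbb{RT}_0$ mass matrix and its diagonal (the ``mass-lumping'' equivalence), which is a well-known consequence of shape regularity.

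First I would exploit the fact that both $\|\bm{r}_h\|_a^2$ and $\|\bm{r}_h\|_{a^D}^2$ contain the \emph{identical} mortar term $(\gamma K_{\bm\nu}^{-1}\mu_h,\mu_h)_\Gamma$, so this part contributes equally on the two sides of \eqref{sec:4.eq:norm_equi} and disappears from the desired equivalence. Writing $\bm{r}_h=[\bm{v}_{0h},\mu_h]$ and setting $\bm{v}_h:=\bm{v}_{0h}+R_h\mu_h\in\bm{V}_h$, the task reduces to proving
\begin{equation*}
   c_1^2\,(K^{-1}\bm{v}_h,\bm{v}_h)_{D,\Omega} \;\leq\; (K^{-1}\bm{v}_h,\bm{v}_h)_{\Omega} \;\leq\; c_2^2\,(K^{-1}\bm{v}_h,\bm{v}_h)_{D,\Omega}, \qquad \forall\,\bm{v}_h\in\bm{V}_h.
\end{equation*}
Next, I would split each side as a sum over the elements $T^d\in\mathcal{T}_\Omega^d$ for $1\leq d\leq n$. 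On a single element, writing $\bm{v}_h=\sum_{f\in\partial T^d} v_f\bm{\phi}_f$ with $v_f=\bm{v}_h\cdot\bm{\nu}_f$ (using that the normal trace of an $\mathbb{RT}_0$ function is constant on each face), the full volume contribution equals $\mathbf{v}^\top M_{T^d}\mathbf{v}$ with entries $(M_{T^d})_{f,f'}=(K^{-1}\bm{\phi}_f,\bm{\phi}_{f'})_{T^d}$, while the $(\cdot,\cdot)_{D,\Omega}$ contribution equals $\mathbf{v}^\top D_{T^d}\mathbf{v}$ with $D_{T^d}=\mathrm{diag}(M_{T^d})$. It therefore suffices to establish the local bound $c_1^2 D_{T^d}\preceq M_{T^d}\preceq c_2^2 D_{T^d}$ uniformly over all admissible $T^d$, after which summation yields the global inequality.

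For the local inequality I would invoke the Piola pullback to a reference simplex $\hat T^d$: the symmetric matrix $\widetilde M:=D_{T^d}^{-1/2}M_{T^d}D_{T^d}^{-1/2}$ then depends continuously on the normalized Jacobian of the affine map $\hat T^d\to T^d$ and on $K|_{T^d}$, both of which range over \emph{compact} sets determined by the shape-regularity constant and the a priori bounds on $K$. Since $\widetilde M$ is symmetric positive definite throughout this parameter domain, a standard compactness argument yields uniform upper and lower bounds on its spectrum. A decisive point is that the sandwich $D_{T^d}^{-1/2}(\cdot)D_{T^d}^{-1/2}$ cancels scalar rescalings of $K$, so no dependence on the magnitude of $K$ survives. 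The main obstacle is precisely this parameter-robustness bookkeeping — verifying that $h$, $K$, $K_{\bm\nu}$, and $\gamma$ truly drop out of $c_1$ and $c_2$: independence of $K_{\bm\nu}$ and $\gamma$ follows from the mortar-term cancellation, independence of $h$ follows from the Piola scaling (which enters $M_{T^d}$ and $D_{T^d}$ through the same factors), and independence of $K$ follows from the sandwich cancellation above. Once these pieces are in place, the lemma is a consequence of finite-dimensional linear algebra on the reference element.
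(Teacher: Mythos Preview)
Your proposal is correct and follows essentially the same route as the paper: cancel the common mortar term, localize to elements, map to a reference simplex via the Piola/scaling argument, and conclude the spectral equivalence between the local $\mathbb{RT}_0$ mass matrix and its diagonal. The only cosmetic difference is that the paper writes down the explicit reference mass matrix (for $d=3$) and reads off the generalized eigenvalues, whereas you invoke an abstract compactness argument to reach the same conclusion.
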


	\begin{proof}
		Recall that 
		\begin{align*}
			\| \bm{r}_h \|^2_a & = a( [\bm{v}_{0h}, \mu_h] , [\bm{v}_{0h}, \mu_h] ) \nonumber \\
			& = ( K^{-1} (\bm{v}_{0h} + R_h \mu_h), (\bm{v}_{0h} + R_h \mu_h))_{\Omega} +  (\gamma K_{\bm{\nu}}^{-1} \mu_h, \mu_h)_{\Gamma}, \\ \| \bm{r}_h \|^2_{a^D} & = a^D( [\bm{v}_{0h}, \mu_h] , [\bm{v}_{0h}, \mu_h] ) \nonumber \\
			& = ( K^{-1} (\bm{v}_{0h} + R_h \mu_h), (\bm{v}_{0h} + R_h \mu_h))_{D,\Omega} +  (\gamma K_{\bm{\nu}}^{-1} \mu_h, \mu_h)_{\Gamma}.
		\end{align*}

		Obviously, \eqref{sec:4.eq:norm_equi} holds if $( K^{-1} (\bm{v}_{0h} + R_h \mu_h), (\bm{v}_{0h} + R_h \mu_h))_{\Omega}$ and $( K^{-1} (\bm{v}_{0h} + R_h \mu_h), (\bm{v}_{0h} + R_h \mu_h))_{D,\Omega}$ are spectrally equivalent.  Note that
		\begin{equation*}
			(K^{-1}(\bm{v}_{0h}+ R_h \mu_h), \bm{v}_{0h} + R_h \mu_h)_{\Omega}= 
			\sum_{d=0}^n \sum_{T^d\in \mathcal{T}^d_{\Omega}}(K^{-1}(\bm{v}_{0h}+ R_h \mu_h), \bm{v}_{0h} + R_h \mu_h)_{T^d} ,
		\end{equation*}
		where
		\begin{align*}
			&(K^{-1}(\bm{v}_{0h}+ R_h \mu_h), \bm{v}_{0h} + R_h \mu_h)_{T^d} = \\  
			& \sum_{f^d, \tilde{f}^d \in \partial T^d} ((\bm{v}_{0h}+ R_h \mu_h)\cdot \bm{\nu}_{f^d}) ((\bm{v}_{0h}+ R_h \mu_h)\cdot \bm{\nu}_{\tilde{f}^d}) (K^{-1} \bm{\phi}_{f^d}, \bm{\phi}_{\tilde{f}^d})_{T^d}.
		\end{align*}
		and 
		\begin{equation*}
			(K^{-1}(\bm{v}_{0h}+ R_h \mu_h), \bm{v}_{0h} + R_h \mu_h)_{D,\Omega} = 
			\sum_{d=0}^n  \sum_{T^d\in \mathcal{T}^d_{\Omega}} (K^{-1}(\bm{v}_{0h}+ R_h \mu_h), \bm{v}_{0h} + R_h \mu_h)_{D,T^d} ,
		\end{equation*}
		where
		\begin{align*}
			& (K^{-1}(\bm{v}_{0h}+ R_h \mu_h), \bm{v}_{0h} + R_h \mu_h)_{D,T^d} = \\
			&  \sum_{f^d \in \partial T^d} ((\bm{v}_{0h}+ R_h \mu_h)\cdot \bm{\nu}_{f^d}) ((\bm{v}_{0h}+ R_h \mu_h)\cdot \bm{\nu}_{f^d}) (K^{-1} \bm{\phi}_{f^d}, \bm{\phi}_{f^d})_{T^d}.
		\end{align*}
		Therefore, we can immediately observe that it is enough to show that $(K^{-1}(\bm{v}_{0h}+ R_h \mu_h), \bm{v}_{0h} + R_h \mu_h)_{T^d}$ and $(K^{-1}(\bm{v}_{0h}+ R_h \mu_h), \bm{v}_{0h} + R_h \mu_h)_{D,T^d}$ are spectrally equivalent on each element $T^d$, $0< d \leq n$.  In addition, by using the scaling argument~\cite[Section 4.5.2]{brenner}, we only need to show they are spectrally equivalent on a reference element $\hat{T}^d$, i.e., 
		\begin{align}
			\quad \tilde{c}_1  (K^{-1}(\bm{v}_{0h} & + R_h \mu_h), \bm{v}_{0h} + R_h \mu_h)_{D,\hat{T}^d} \nonumber \\
			&\leq (K^{-1}(\bm{v}_{0h}+ R_h \mu_h), \bm{v}_{0h} + R_h \mu_h)_{\hat{T}^d} \label{sec:4.eq:norm_equiv_reference} \\ 
			&\leq \tilde{c}_2 (K^{-1}(\bm{v}_{0h}+ R_h \mu_h), \bm{v}_{0h} + R_h \mu_h)_{D,\hat{T}^d}. \nonumber
		\end{align} 
		We show the proof for $d=n=3$. For other cases the proof follows similarly.  

		For $ d = n = 3$, the reference element $ \hat{T}^d $ is a tetrahedron with vertices $ (0, 0, 0) $, $ (1, 0, 0) $, $ (0, 1, 0) $ and $ (0, 0, 1) $ in the Cartesian coordinates.  The local matrix $ A_{\hat{T}^d} $, representing $(K^{-1}(\bm{v}_{0h}+ R_h \mu_h), \bm{v}_{0h} + R_h \mu_h)_{\hat{T}^d}$,  takes the following form
		\begin{equation*}
			A_{\hat{T}^d} = \dfrac{K^{-1}}{120}
			\begin{pmatrix}
				18 & \sqrt{3} & \sqrt{3} & \sqrt{3} \\
				\sqrt{3} & 16 & -4 & -4 \\
				\sqrt{3} & -4 & 16 & -4 \\
				\sqrt{3} & -4 & -4 & 16
			\end{pmatrix},
		\end{equation*} 

		By the definition, $(K^{-1}(\bm{v}_{0h}+ R_h \mu_h), \bm{v}_{0h} + R_h \mu_h)_{D, \hat{T}^d}$ is represented by the diagonal of $A_{\hat{T}^d}$, which we denote as $D_{A_{\hat{T}^d}} = \frac{K^{-1}}{120} \text{diag}(18,16,16,16)$. To show~\eqref{sec:4.eq:norm_equiv_reference} on  $ \hat{T}^d $, it is enough to notice that, under our assumption that $K$ is constant on each $T^d$, the generalized eigenvalue problem $ A_{\hat{T}^d} \bm{y} = \chi D_{A_{\hat{T}^d}} \bm{y} $ gives all eigenvalues $ \chi > 0 $ independent of physical and discretization parameters. Therefore, \eqref{sec:4.eq:norm_equiv_reference} holds with $ \tilde{c}_1 = \sqrt{\chi_{\min}} $ and $ \tilde{c}_2 = \sqrt{\chi_{\max}}$, where $\chi_{\min}$ and $\chi_{\max}$ denote the smallest and largest eigenvalue, respectively.  The spectral equivalent result~\eqref{sec:4.eq:norm_equi} follows directly by the scaling argument~\cite[Section 4.5.2]{brenner} and summing over all $T^d \in \mathcal{T}^d_{\Omega}$, $0\leq d \leq n$.  The constants $c_1$ and $c_2$  depend on the shape regularity of the mesh due to the scaling argument but do not depend on the physical and discretization parameters.
	\end{proof}

	Based on the spectral equivalence Lemma~\eqref{sec:4.lemma:norm_equi}, we have the following inf-sup condition regarding the discrete gradient $\bm{D}_h$.

	\begin{lemma}
		\label{sec:4.lemma:infsup}
		Let the discrete gradient operator $ \bm{D}_h $ be defined as in \eqref{sec:4.eq:D_h}. Then there exists a constant $ \beta_{\star} > 0 $ independent of the discretization and physical parameters such that
		\begin{equation}
			\label{sec:4.eq:infsup}
			\inf\limits_{q_h \in Q_h} \sup\limits_{\bm{r}_h \in \bm{W}_h}
			\dfrac{ a^D(\bm{D}_h q_h, \bm{r}_h)}{\| \bm{r}_h \|_{a} \| \bm{D}_h q_h \|_{a^D}} \geq \beta_{\star}.
		\end{equation}
	\end{lemma}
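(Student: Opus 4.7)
The plan is to prove the inf-sup condition by exhibiting an explicit test function, rather than by appealing to the inf-sup condition for $b(\cdot,\cdot)$ from Theorem \ref{sec:2.th:infsup_h}. The key observation is that $\bm{D}_h q_h$ is itself a perfectly good element of $\bm{W}_h$, and plugging it in as the test function $\bm{r}_h$ already produces the full $a^D$-norm squared in the numerator; one only then needs to replace the resulting $a$-norm in the denominator by an $a^D$-norm, which is exactly the content of the spectral equivalence Lemma \ref{sec:4.lemma:norm_equi}.

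Concretely, I would first fix $q_h \in Q_h$ and bound the supremum from below by taking $\bm{r}_h = \bm{D}_h q_h$. By the very definition \eqref{sec:4.eq:D_h} of the discrete gradient, the numerator becomes
\begin{equation*}
a^D(\bm{D}_h q_h, \bm{D}_h q_h) = \| \bm{D}_h q_h \|_{a^D}^2,
\end{equation*}
so that the whole quotient simplifies to $\| \bm{D}_h q_h \|_{a^D}^2 / (\| \bm{D}_h q_h \|_a \| \bm{D}_h q_h \|_{a^D})$. Next, I would apply the upper bound from Lemma \ref{sec:4.lemma:norm_equi}, namely $\| \bm{D}_h q_h \|_a \leq c_2 \| \bm{D}_h q_h \|_{a^D}$, to obtain
\begin{equation*}
\sup_{\bm{r}_h \in \bm{W}_h} \frac{ a^D(\bm{D}_h q_h, \bm{r}_h) }{ \| \bm{r}_h \|_a \| \bm{D}_h q_h \|_{a^D}} \;\geq\; \frac{ \| \bm{D}_h q_h \|_{a^D}^2 }{ c_2 \| \bm{D}_h q_h \|_{a^D}^2 } \;=\; \frac{1}{c_2}.
\end{equation*}
Taking the infimum over $q_h \in Q_h$ then yields the claim with $\beta_{\star} = 1/c_2$, which depends only on the shape regularity of the meshes since $c_2$ does.

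There is no real obstacle here, as the argument is essentially a one-line application of the preceding lemma; the only subtle point is the implicit observation that both $a(\cdot,\cdot)$ and $a^D(\cdot,\cdot)$ are genuine inner products on $\bm{W}_h$ (they are symmetric positive definite because $K$, $K_{\bm{\nu}}$, and $\gamma$ are bounded and positive), so that the expressions $\|\bm{r}_h\|_a$ and $\|\bm{D}_h q_h\|_{a^D}$ really are norms and all divisions above are well defined. The independence of $\beta_{\star}$ from the physical and discretization parameters is then immediate, since $c_2$ inherits this property from Lemma \ref{sec:4.lemma:norm_equi}.
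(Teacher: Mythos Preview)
Your proof is correct and essentially identical to the paper's: both arguments reduce the inf-sup to the spectral equivalence of Lemma~\ref{sec:4.lemma:norm_equi} and arrive at $\beta_\star = c_2^{-1}$. The only cosmetic difference is that the paper bounds the full supremum $\sup_{\bm{r}_h} a^D(\bm{D}_h q_h,\bm{r}_h)/\|\bm{r}_h\|_{a^D} = \|\bm{D}_h q_h\|_{a^D}$ via the inner-product duality, whereas you explicitly plug in the maximizer $\bm{r}_h = \bm{D}_h q_h$; these are the same step viewed from two sides.
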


	\begin{proof}
		Using Lemma \ref{sec:4.lemma:norm_equi}, we have for any $ q_h \in Q_h $
		\begin{align*}
			\sup\limits_{\bm{r}_h \in \bm{W}_h}
			\dfrac{ a^D(\bm{D}_h q_h, \bm{r}_h)}{\| \bm{r}_h \|_{a}} 
			& \geq \sup\limits_{\bm{r}_h \in \bm{W}_h}
			\dfrac{ a^D(\bm{D}_h q_h, \bm{r}_h)}{ c_2 \| \bm{r}_h \|_{a^D}} \\
			& = c_2^{-1} \| \bm{D}_h q_h \|_{a^D}.
		\end{align*}
		Now the result follows taking infimum over all $ q_h \in Q_h $ and $ \beta_{\star} = c_2^{-1} $.
	\end{proof}

	Based on Lemma~\ref{sec:4.lemma:norm_equi} and~\ref{sec:4.lemma:infsup}, by Babuska-Brezzi theory \cite{BoffiBrezziFortin2013, Brezzi1974}, we can conclude that the alternative formulation~\eqref{sec:4.eq:alter-saddle_point_h} is well-posed with respect to the norm~\eqref{sec:4.eq:alter_norm} as stated in the following theorem. 
	
	\begin{theorem}\label{sec:4.thm:alter_form_well_posed}
		Consider the composite bilinear form on the space $\bm{W}_h \times Q_h$,
		\begin{equation*}
			\mathcal{L}(\bm{w}_h, p_h; \bm{r}_h, q_h) := a(\bm{w}_h,\bm{r}_h) + a^D(\bm{D}_h p_h, \bm{r}_h) + a^D(\bm{D}_hq_h, \bm{w}_h).
		\end{equation*}
		It satisfies the continuity condition and the inf-sup condition with respect to $\vertiii{(\bm{r}_h, q_h)}$, i.e., for any $(\bm{w}_h,p_h) \in \bm{W}_h \times Q_h$ and $(\bm{r}_h, q_h) \in \bm{W}_h \times Q_h$, 
		\begin{align*}
			|\mathcal{L}(\bm{w}_h, p_h; \bm{r}_h, q_h)| &\leq \alpha \vertiii{(\bm{w}_h, p_h)}\vertiii{(\bm{r}_h, q_h)}, \\
			\inf_{(\bm{r}_h, q_h) \in \bm{W}_h \times Q_h}\sup_{(\bm{w}_h,p_h) \in \bm{W}_h \times Q_h} \frac{\mathcal{L}(\bm{w}_h, p_h; \bm{r}_h, q_h)}{\vertiii{(\bm{w}_h, p_h)}\vertiii{(\bm{r}_h, q_h)}} &\geq \beta, 
		\end{align*}
		with constants $\alpha$ and $\beta$ dependent on the shape regularity of the mesh but independent of discretization and physical parameters.
	\end{theorem}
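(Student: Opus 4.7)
The plan is to recognize that $\mathcal{L}(\bm{w}_h, p_h; \bm{r}_h, q_h)$ is exactly the bilinear form associated with the saddle point pair $(a(\cdot,\cdot), \tilde b(\cdot,\cdot))$ on $\bm{W}_h \times Q_h$, where $\tilde b(\bm{r}_h, q_h) := a^D(\bm{D}_h q_h, \bm{r}_h)$. The Babuska–Brezzi theory then provides continuity and inf-sup for the composite form with respect to the norm $\vertiii{\cdot}$ as soon as one verifies the four standard ingredients: (i) continuity of $a$ in $\|\cdot\|_a$, (ii) continuity of $\tilde b$ in $\|\cdot\|_a$ and $\|\bm{D}_h\cdot\|_{a^D}$, (iii) coercivity of $a$ on $\ker \tilde b$, and (iv) the inf-sup for $\tilde b$. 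Items (i) and (iii) are trivial since $\|\bm{r}_h\|_a^2 = a(\bm{r}_h,\bm{r}_h)$ by definition, so $a$ is both continuous and coercive on all of $\bm{W}_h$ with constant $1$. Item (iv) is precisely Lemma~\ref{sec:4.lemma:infsup}. For (ii), Cauchy–Schwarz in the $a^D$-inner product gives $|a^D(\bm{D}_h q_h, \bm{r}_h)| \leq \|\bm{D}_h q_h\|_{a^D} \|\bm{r}_h\|_{a^D}$, and Lemma~\ref{sec:4.lemma:norm_equi} upgrades this to $\|\bm{D}_h q_h\|_{a^D}\, c_1^{-1} \|\bm{r}_h\|_a$. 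Summing the three Cauchy–Schwarz bounds over the terms of $\mathcal{L}$ yields the required continuity with $\alpha$ depending only on $c_1$ and hence only on mesh shape regularity.

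For the composite inf-sup, I would use the classical test-function construction. Fix $(\bm{w}_h, p_h) \in \bm{W}_h \times Q_h$. By Lemma~\ref{sec:4.lemma:infsup}, choose $\bm{r}_h^\star \in \bm{W}_h$, normalized so that $\|\bm{r}_h^\star\|_a = \|\bm{D}_h p_h\|_{a^D}$ and $a^D(\bm{D}_h p_h, \bm{r}_h^\star) \geq \beta_\star \|\bm{D}_h p_h\|_{a^D}^2$. Then test with $\bm{r}_h := \bm{w}_h + \delta \bm{r}_h^\star$ and $q_h := -p_h$, where $\delta>0$ will be selected. The symmetric placement of $q_h = -p_h$ cancels the diagonal off-diagonal contribution involving $\bm{w}_h$, leaving
\begin{equation*}
\mathcal{L}(\bm{w}_h, p_h; \bm{r}_h, q_h) = \|\bm{w}_h\|_a^2 + \delta\, a(\bm{w}_h, \bm{r}_h^\star) + \delta\, a^D(\bm{D}_h p_h, \bm{r}_h^\star).
\end{equation*}
Applying Cauchy–Schwarz to the middle term, the inf-sup bound to the last, and Young's inequality to absorb the cross term into $\|\bm{w}_h\|_a^2$ and $\|\bm{D}_h p_h\|_{a^D}^2$, one picks $\delta$ small enough (for instance $\delta = \beta_\star$) to obtain a lower bound of the form $c\, \vertiii{(\bm{w}_h, p_h)}^2$ with $c$ depending only on $\beta_\star$.

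It remains to bound $\vertiii{(\bm{r}_h, q_h)}$ from above by $\vertiii{(\bm{w}_h, p_h)}$. The triangle inequality in $\|\cdot\|_a$ together with $\|\bm{r}_h^\star\|_a = \|\bm{D}_h p_h\|_{a^D}$ yields $\|\bm{r}_h\|_a \leq \|\bm{w}_h\|_a + \delta \|\bm{D}_h p_h\|_{a^D} \lesssim \vertiii{(\bm{w}_h, p_h)}$, and $\|\bm{D}_h q_h\|_{a^D} = \|\bm{D}_h p_h\|_{a^D}$. Dividing the two estimates produces the inf-sup constant $\beta$ depending only on $\beta_\star$ and the Young parameter, and therefore only on the shape regularity through Lemma~\ref{sec:4.lemma:norm_equi}.

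The only nontrivial step is arranging the test function so that the off-diagonal coupling is controlled without polluting the $\|\bm{w}_h\|_a^2$ term; once the normalization $\|\bm{r}_h^\star\|_a = \|\bm{D}_h p_h\|_{a^D}$ is fixed and $\delta$ is tuned, the argument reduces to bookkeeping. No further obstacle is expected, since the spectral equivalence of $a$ and $a^D$ (Lemma~\ref{sec:4.lemma:norm_equi}) ensures that all cross-norm estimates remain uniform in $h$, $K$, $K_{\bm{\nu}}$, and $\gamma$.
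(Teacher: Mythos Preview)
Your proposal is correct and follows essentially the same approach as the paper: the paper simply invokes the Babu\v{s}ka--Brezzi theory together with Lemma~\ref{sec:4.lemma:norm_equi} and Lemma~\ref{sec:4.lemma:infsup} to conclude well-posedness in the norm~\eqref{sec:4.eq:alter_norm}, without spelling out the test-function construction. Your explicit verification of the four Brezzi ingredients and the Fortin-style choice $(\bm{r}_h,q_h)=(\bm{w}_h+\delta\bm{r}_h^\star,-p_h)$ is more detailed than what the paper provides, but it is exactly the argument underlying the citation.
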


	\subsection{Block diagonal preconditioners}
	\label{sec:4.subsec:diag_precond}

	The well-posedness Theorem~\ref{sec:4.thm:alter_form_well_posed} provides alternative block preconditioners for solving the linear system~\eqref{sec:2.eq:block_form_h} effectively.  To this end, we introduce a linear operators $D_A: \bm{W}_h \to \bm{W}_h'$ which is defined as $\langle D_A \bm{w}_h, \bm{r}_h \rangle = a^D(\bm{w}_h, \bm{r}_h)$ for $ \bm{w}_h, \bm{r}_h \in \bm{W}_h $.  The reason we use the notation $D_A$ here is that, by the definitions of $a(\cdot,\cdot)$ and $a^D(\cdot, \cdot)$, the matrix representation of linear operator $D_A$ is exactly the diagonal of the matrix representation of linear operator $A$.  Then, by the definition of the discrete gradient operator $\bm{D}_h$~\eqref{sec:4.eq:D_h}, we have $D_A \bm{D}_h = B^T$ and, therefore, 
	\begin{equation*}
		\| \bm{D}_h q_h \|^2_{a^D} = \langle D_A \bm{D}_h q_h, \bm{D}_h q_h \rangle = \langle B^T q_h, D_A^{-1} B^T q_h \rangle = \langle B D_A^{-1} B^T q_h, q_h \rangle,
	\end{equation*}
	for $ q_h \in Q_h $. Based on the above operator form of the $\| \cdot \|_{a^D}$ norm, the Riesz operator corresponding to the $\vertiii{\cdot}$ norm~\eqref{sec:4.eq:alter_norm} is

	\begin{equation}
		\label{sec:4.eq:diag_precond}
		\mathcal{B}_D = 
		\begin{pmatrix}
			A & 0 \\
			0 & B D_A^{-1} B^T
		\end{pmatrix}^{-1}.
	\end{equation}

	As discussed in Section~\ref{sec:3.subsec:norm-equiv}, $\mathcal{B}_D$ is a norm-equivalent preconditioner for solving the system~\eqref{sec:2.eq:block_form_h} and we have the following theorem regarding the condition number of $\mathcal{B}_D \mathcal{A}$. 

	\begin{theorem}
		\label{sec:4.thm:cond_number}
		Let $ \mathcal{B}_D $ be as in \eqref{sec:4.eq:diag_precond}. Then $ \kappa(\mathcal{B}_D \mathcal{A}) \leq \dfrac{\alpha}{\beta}$. 
	\end{theorem}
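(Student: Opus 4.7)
The plan is to recognize that Theorem~\ref{sec:4.thm:cond_number} is essentially a direct corollary of the well-posedness Theorem~\ref{sec:4.thm:alter_form_well_posed} together with the general norm-equivalent preconditioning framework recalled in Section~\ref{sec:3.subsec:norm-equiv}, in particular the bound~\eqref{sec:3.eq:cond_no}. The only substantive verification is that the block operator $\mathcal{B}_D$ in~\eqref{sec:4.eq:diag_precond} really is the Riesz map on $\bm{X} = \bm{W}_h \times Q_h$ equipped with the inner product that induces $\vertiii{\cdot}$.

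First, I would identify the inner product. By construction, the norm $\vertiii{(\bm{r}_h, q_h)}^2 = \|\bm{r}_h\|_a^2 + \|\bm{D}_h q_h\|_{a^D}^2$ is induced by an inner product that splits as a direct sum between the $\bm{W}_h$ and $Q_h$ components. The $\bm{W}_h$ block is $a(\bm{r}_h, \bm{r}_h) = \langle A \bm{r}_h, \bm{r}_h\rangle$, so its Riesz representation is $A^{-1}$. For the $Q_h$ block, I would invoke the identity displayed immediately before~\eqref{sec:4.eq:diag_precond}, namely $\|\bm{D}_h q_h\|_{a^D}^2 = \langle B D_A^{-1} B^T q_h, q_h\rangle$, which follows from $D_A \bm{D}_h = B^T$. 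Hence the Riesz map on the $Q_h$ component is $(B D_A^{-1} B^T)^{-1}$, and $\mathcal{B}_D$ as defined in~\eqref{sec:4.eq:diag_precond} is precisely the Riesz operator for $\vertiii{\cdot}$.

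Next, by Theorem~\ref{sec:4.thm:alter_form_well_posed}, the operator $\mathcal{A}$ from~\eqref{sec:2.eq:block_form_h}, viewed as the operator induced by the composite bilinear form $\mathcal{L}(\cdot;\cdot)$, satisfies the continuity and inf-sup conditions~\eqref{sec:3.eq:bilin_form} with respect to $\vertiii{\cdot}$, with constants $\alpha$ and $\beta$ independent of discretization and physical parameters. The standard argument in Section~\ref{sec:3.subsec:norm-equiv} then yields
\begin{equation*}
\kappa(\mathcal{B}_D \mathcal{A}) = \|\mathcal{B}_D \mathcal{A}\|_{\mathscr{L}(\bm{X},\bm{X})} \, \|(\mathcal{B}_D \mathcal{A})^{-1}\|_{\mathscr{L}(\bm{X},\bm{X})} \leq \frac{\alpha}{\beta},
\end{equation*}
exactly as in~\eqref{sec:3.eq:cond_no}, which gives the claimed bound.

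There is no real obstacle here; the slightly delicate point is bookkeeping, namely confirming that the alternative weak interpretation of $\mathcal{A}$ used in Theorem~\ref{sec:4.thm:alter_form_well_posed} agrees as an operator on $\bm{X} \to \bm{X}'$ with the one appearing in~\eqref{sec:2.eq:block_form_h}, so that the abstract machinery of Section~\ref{sec:3.subsec:norm-equiv} applies verbatim with $\mathcal{M} = \mathcal{B}_D$ and the norm $\vertiii{\cdot}$ playing the role of $\|\cdot\|_{\bm{X}}$. Once that identification is made, the proof is a one-line invocation of~\eqref{sec:3.eq:cond_no}.
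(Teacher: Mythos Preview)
Your proposal is correct and matches the paper's approach exactly: the paper does not give an explicit proof of this theorem but simply observes that $\mathcal{B}_D$ is the Riesz operator for the norm $\vertiii{\cdot}$ and then invokes the general bound~\eqref{sec:3.eq:cond_no} from Section~\ref{sec:3.subsec:norm-equiv} together with the well-posedness constants from Theorem~\ref{sec:4.thm:alter_form_well_posed}. You have correctly spelled out the identification of $\mathcal{B}_D$ with the Riesz map via $\|\bm{D}_h q_h\|_{a^D}^2 = \langle B D_A^{-1} B^T q_h, q_h\rangle$ and the bookkeeping that the operator $\mathcal{A}$ is the same in both formulations.
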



	\begin{remark}
		\label{sec:4.rem:robustness}
		Notice that Theorem~\ref{sec:4.thm:alter_form_well_posed} (essentially Lemma~\ref{sec:4.lemma:norm_equi}) ensures that $ \kappa(\mathcal{B}_D \mathcal{A}) $ is bounded independently of $ h $ and parameters $ K $, $K_{\bm{\nu}}$ and $ \gamma $, but remains dependent on the shape regularity of the mesh.
	\end{remark}



	In practice, applying the preconditioner $ \mathcal{B}_D $ implies inverting the diagonal block exactly, which can be expensive and sometimes infeasible. Thus, we consider the following preconditioner 
	\begin{equation}
		\label{sec:4.eq:spectral_equi_precond}
		\mathcal{M}_D = 
		\begin{pmatrix}
			H_{\bm{w}} & 0 \\
			0 & H_p
		\end{pmatrix},
	\end{equation}
	where the diagonal blocks $ H_{\bm{w}} $ and $ H_{p} $ are symmetric positive definite and spectrally equivalent to diagonal blocks in $A$ and $BD_A^{-1}B^T$, respectively, i.e.
	\begin{subequations}
		\label{sec:4.eq:spectral_equi_blocks}
		\begin{align}
			c_{1, \bm{w}} (H_{\bm{w}} \bm{r}_h, \bm{r}_h) & \leq (A^{-1} \bm{r}_h , \bm{r}_h ) \leq c_{2, \bm{w}} (H_{\bm{w}} \bm{r}_h, \bm{r}_h), \label{sec:4.eq:spectral_equi_blocks_a}\\
			c_{1, p} (H_p q_h, q_h) & \leq ( (B D_A^{-1} B^T)^{-1} q_h , q_h) \leq c_{2, p} (H_p q_h, q_h), \label{sec:4.eq:spectral_equi_blocks_b}
		\end{align}
	\end{subequations}	
	where the constants $c_{1,\bm{w}}$, $c_{1,p}$, $c_{2,\bm{w}}$, and $c_{2,p}$ are independent of discretization and physical parameters.  In practice, $H_{\bm{w}}$ can be defined by a diagonal scaling, i.e., $D_A^{-1}$ and $H_p$ can be defined by standard multigrid methods.  In general, the choice of $H_{\bm{w}}$ and $H_p$ are not very restrictive, provided it handles possible heterogeneity in physical parameters $K$, $K_{\nu}$, and $\gamma$. 

	$\mathcal{M}_D$ is a norm-equivalent preconditioner as well. Following~\cite{mardal:winther}, we can directly estimate the condition number of $\mathcal{M}_D \mathcal{A}$ in the following theorem.
	\begin{theorem}
		\label{sec:4.thm:cond_number_MD}
		Let $\mathcal{M_D}$ be as in~\eqref{sec:4.eq:spectral_equi_precond} and let \eqref{sec:4.eq:spectral_equi_blocks} hold. Then it follows that $\kappa(\mathcal{M}_D\mathcal{A}) \leq \frac{\alpha c_2}{ \beta c_1}$, where $c_2 = \max\{c_{2,\bm{w}}, c_{2,p}\}$ and $c_1 = \min \{ c_{1,\bm{w}}, c_{1,p} \}$.	
	\end{theorem}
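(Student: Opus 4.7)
The plan is to deduce this result directly from the norm-equivalent preconditioner framework recalled in Section~\ref{sec:3.subsec:norm-equiv}, combined with the well-posedness established in Theorem~\ref{sec:4.thm:alter_form_well_posed}. First, I would observe that the Riesz operator associated with the alternative norm $\vertiii{\cdot}$ on $\bm{W}_h \times Q_h$ is precisely $\mathcal{B}_D$ from~\eqref{sec:4.eq:diag_precond}, so $\vertiii{(\bm{r}_h, q_h)}^2 = \langle \mathcal{B}_D^{-1}(\bm{r}_h, q_h),(\bm{r}_h, q_h)\rangle$. This identifies $\vertiii{\cdot}$ with the norm $\|\cdot\|_{\mathcal{B}_D^{-1}}$ that appears in the framework of Section~\ref{sec:3.subsec:norm-equiv}.

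Next I would translate the spectral equivalence hypotheses~\eqref{sec:4.eq:spectral_equi_blocks} into the form needed by the framework~\eqref{sec:3.eq:norm-equiv}. Since \eqref{sec:4.eq:spectral_equi_blocks_a} is a two-sided operator inequality between $H_{\bm{w}}$ and $A^{-1}$, inverting reverses the inequalities and yields
\begin{equation*}
c_{1,\bm{w}}\,(A\bm{r}_h,\bm{r}_h) \leq (H_{\bm{w}}^{-1}\bm{r}_h,\bm{r}_h) \leq c_{2,\bm{w}}\,(A\bm{r}_h,\bm{r}_h),
\end{equation*}
and analogously for the pressure block. Summing these two equivalences and using the operator identity $\|\bm{D}_h q_h\|_{a^D}^2 = (BD_A^{-1}B^T q_h, q_h)$ (established in the paragraph preceding \eqref{sec:4.eq:diag_precond}), I obtain
\begin{equation*}
c_1 \vertiii{(\bm{r}_h, q_h)}^2 \leq \|(\bm{r}_h, q_h)\|_{\mathcal{M}_D^{-1}}^2 \leq c_2 \vertiii{(\bm{r}_h, q_h)}^2,
\end{equation*}
where $c_1 = \min\{c_{1,\bm{w}}, c_{1,p}\}$ and $c_2 = \max\{c_{2,\bm{w}}, c_{2,p}\}$. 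This is exactly the norm-equivalence condition~\eqref{sec:3.eq:norm-equiv} with $\bm{X}$-norm taken to be $\vertiii{\cdot}$.

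Finally, I would appeal to Theorem~\ref{sec:4.thm:alter_form_well_posed}, which gives continuity constant $\alpha$ and inf-sup constant $\beta$ for $\mathcal{A}$ with respect to $\vertiii{\cdot}$, and then invoke the general estimate from Section~\ref{sec:3.subsec:norm-equiv} stating that $\kappa(\mathcal{M}\mathcal{A}) \leq \frac{c_2\alpha}{c_1\beta}$ whenever $\mathcal{M}$ is norm-equivalent to the Riesz operator. Substituting $\mathcal{M} = \mathcal{M}_D$ produces the bound $\kappa(\mathcal{M}_D\mathcal{A}) \leq \frac{c_2\alpha}{c_1\beta}$.

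The argument is mostly bookkeeping; no deep new estimate is needed. The only delicate step is the inversion of the spectral equivalence~\eqref{sec:4.eq:spectral_equi_blocks}, which relies on $H_{\bm{w}}$, $H_p$, $A$, and $BD_A^{-1}B^T$ all being symmetric positive definite so that the order-reversing property of operator inversion is available. I would state this SPD property explicitly (it follows from coercivity of $a(\cdot,\cdot)$ and the inf-sup condition Lemma~\ref{sec:4.lemma:infsup}, together with the assumed positive definiteness of the blocks $H_{\bm{w}}$ and $H_p$) before performing the inversion, to keep the proof self-contained.
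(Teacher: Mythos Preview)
Your proposal is correct and follows precisely the approach the paper indicates: the paper does not spell out a proof but simply writes ``Following~\cite{mardal:winther}, we can directly estimate the condition number of $\mathcal{M}_D\mathcal{A}$,'' which is exactly the norm-equivalence framework of Section~\ref{sec:3.subsec:norm-equiv} that you invoke. Your argument---identifying $\vertiii{\cdot}$ with $\|\cdot\|_{\mathcal{B}_D^{-1}}$, inverting~\eqref{sec:4.eq:spectral_equi_blocks} to obtain~\eqref{sec:3.eq:norm-equiv}, and then applying Theorem~\ref{sec:4.thm:alter_form_well_posed}---is the intended route, and the inversion step is handled correctly.
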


	\begin{remark}
		\label{sec:4.rem:robustness_MD}
		Again, $ \kappa(\mathcal{M}_D \mathcal{A}) $ is bounded independently of $ h $ and parameters $ K $, $K_{\bm{\nu}}$ and $ \gamma $, but remains dependent on the shape regularity of the mesh.
	\end{remark}




\subsection{Block triangular preconditioners}
\label{sec:4.subsec:triang_precond}
	In this subsection, we consider the block triangular preconditioners based on the FOV-equivalent preconditioners we discussed in Section~\ref{sec:3.subsec:FOV-equiv}.   Here, we analyze the robustness of block triangular preconditioners and show the corresponding FOV-equivalence, which leads to uniform convergence rate of the GMRES method. 

	The block lower triangular preconditioners take the following form
	\begin{equation}
		\label{sec:3.eq:lower_precond}
		\mathcal{B}_L = 
		\begin{pmatrix}
			A & 0 \\
			-B & B D_A^{-1} B^T
		\end{pmatrix}^{-1} 
		\text{ and }
		\mathcal{M}_L = 
		\begin{pmatrix}
			H_{\bm{w}}^{-1} & 0 \\
			-B & H_p^{-1}
		\end{pmatrix}^{-1}.
	\end{equation}
	On the other hand, the block upper triangular preconditioners are given as
	\begin{equation}
		\label{sec:3.eq:upper_precond}
		\mathcal{B}_U = 
		\begin{pmatrix}
			A & B^T \\
			0 & B D_A^{-1} B^T
		\end{pmatrix}^{-1}
		\text{ and }
		\mathcal{M}_U = 
		\begin{pmatrix}
			H_{\bm{w}}^{-1} & B^T \\
			0 & H_p^{-1}
		\end{pmatrix}^{-1}.
	\end{equation}
	Basically, $\mathcal{M}_L$ and $\mathcal{M}_U$ are inexact versions of $\mathcal{B}_L$ and $\mathcal{B}_U$ when the diagonal blocks are replaced by spectrally equivalent approximations~\eqref{sec:4.eq:spectral_equi_blocks}.


	Next theorem shows that $\mathcal{B}_L$ and $\mathcal{A}$ are FOV-equivalent.
	\begin{theorem}
		\label{sec:4.thm:FOV_lower}
		There exist constants $ \xi_1, \xi_2 > 0 $, independent of discretization and physical parameters, such that for every $ \bm{x} = (\bm{w}_h, p_h) \in \bm{W}_h \times Q_h $, $ \bm{x} \neq \bm{0} $,
		\begin{align*}
			\xi_1 \leq \dfrac{ (\mathcal{B}_L \mathcal{A} \bm{x}, \bm{x})_{\mathcal{B}_D^{-1}} }{ (\bm{x}, \bm{x})_{\mathcal{B}_D^{-1}} }, & \quad \text{ and } \quad 
			\dfrac{ \| \mathcal{B}_L \mathcal{A} \bm{x} \|_{\mathcal{B}_D^{-1}} }{ \| \bm{x} \|_{\mathcal{B}_D^{-1}} } \leq \xi_2.
		\end{align*}
	\end{theorem}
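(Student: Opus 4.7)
The plan is to compute $\mathcal{B}_L\mathcal{A}$ in closed block form and then verify each of the two FOV-equivalence inequalities separately by direct calculation, with Lemma~\ref{sec:4.lemma:norm_equi} playing the central role by transferring the spectral equivalence between $A$ and $D_A$ to the Schur complements $BA^{-1}B^T$ and $BD_A^{-1}B^T$. To set notation, I will write $S_A := BA^{-1}B^T$ and $S_D := BD_A^{-1}B^T$. A short block-matrix inversion gives
\begin{equation*}
\mathcal{B}_L\mathcal{A} \;=\; \begin{pmatrix} I & A^{-1}B^T \\ 0 & S_D^{-1}S_A \end{pmatrix},
\end{equation*}
and since $\mathcal{B}_D^{-1}$ is block-diagonal with blocks $A$ and $S_D$, the two quantities of interest reduce to
\begin{equation*}
(\mathcal{B}_L\mathcal{A}\bm{x},\bm{x})_{\mathcal{B}_D^{-1}} = \|\bm{w}_h\|_A^2 + \langle B^T p_h,\bm{w}_h\rangle + \langle S_A p_h,p_h\rangle, \qquad \|\bm{x}\|_{\mathcal{B}_D^{-1}}^2 = \|\bm{w}_h\|_A^2 + \|p_h\|_{S_D}^2.
\end{equation*}

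For the lower bound $\xi_1$, I would control the cross term by Cauchy--Schwarz in the $A$-inner product, $|\langle B^T p_h,\bm{w}_h\rangle| \leq \|\bm{w}_h\|_A \,\langle S_A p_h,p_h\rangle^{1/2}$, and then use the elementary inequality $a^2-ab+b^2 \geq \tfrac{1}{2}(a^2+b^2)$ to obtain $(\mathcal{B}_L\mathcal{A}\bm{x},\bm{x})_{\mathcal{B}_D^{-1}} \geq \tfrac{1}{2}\bigl(\|\bm{w}_h\|_A^2 + \langle S_A p_h,p_h\rangle\bigr)$. Squaring the bounds in Lemma~\ref{sec:4.lemma:norm_equi} yields $c_1^2 D_A \leq A \leq c_2^2 D_A$ in the spectral sense, and monotonicity of operator inversion lifts this to $c_2^{-2} S_D \leq S_A \leq c_1^{-2} S_D$; in particular $\langle S_A p_h,p_h\rangle \geq c_2^{-2}\|p_h\|_{S_D}^2$, which delivers $\xi_1 = \tfrac{1}{2}\min(1,c_2^{-2})$.

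For the upper bound $\xi_2$, the norm decomposes as $\|\mathcal{B}_L\mathcal{A}\bm{x}\|_{\mathcal{B}_D^{-1}}^2 = \|\bm{w}_h + A^{-1}B^T p_h\|_A^2 + \langle S_A p_h, S_D^{-1} S_A p_h\rangle$. The first summand is handled by $\|u+v\|_A^2 \leq 2\|u\|_A^2+2\|v\|_A^2$ combined with $\|A^{-1}B^Tp_h\|_A^2 = \langle S_A p_h,p_h\rangle \leq c_1^{-2}\|p_h\|_{S_D}^2$. The second summand requires observing that $S_D^{-1}S_A$ is self-adjoint with respect to $(\cdot,\cdot)_{S_D}$ and that the operator inequality $S_A \leq c_1^{-2} S_D$ is equivalent to $\|S_D^{-1}S_A\|_{S_D} \leq c_1^{-2}$; therefore $\langle S_A p_h, S_D^{-1}S_A p_h\rangle = \|S_D^{-1}S_A p_h\|_{S_D}^2 \leq c_1^{-4}\|p_h\|_{S_D}^2$. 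Summing the two contributions gives $\xi_2 = \sqrt{\max(2,\,2c_1^{-2}+c_1^{-4})}$.

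The main obstacle is the off-diagonal second term in the upper-bound computation: a naive Cauchy--Schwarz on $\langle S_A p_h, S_D^{-1}S_A p_h\rangle$ is circular, so one must recognize that $S_D^{-1}S_A$ is $S_D$-self-adjoint in order to convert the one-sided spectral bound $S_A \leq c_1^{-2}S_D$ into a bound on the square of the operator. Once this is in place, the remaining steps are purely algebraic and every constant traces back to $c_1,c_2$ from Lemma~\ref{sec:4.lemma:norm_equi}, so $\xi_1$ and $\xi_2$ depend only on the shape regularity of the mesh, not on $h$, $K$, $K_{\bm{\nu}}$, or $\gamma$.
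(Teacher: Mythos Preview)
Your argument is correct and shares the paper's overall strategy: compute $\mathcal{B}_L\mathcal{A}$ explicitly in block form, obtain the lower bound by Cauchy--Schwarz on the cross term $\langle B^T p_h,\bm{w}_h\rangle$ together with the $2\times 2$ eigenvalue estimate $a^2-ab+b^2\ge\tfrac12(a^2+b^2)$, and then invoke Lemma~\ref{sec:4.lemma:norm_equi} to pass from $\|B^Tp_h\|_{A^{-1}}^2$ to $\|B^Tp_h\|_{D_A^{-1}}^2$. The lower-bound portions are essentially identical.

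The upper bounds differ in execution. The paper bounds the bilinear form $(\mathcal{B}_L\mathcal{A}\bm{x},\bm{y})_{\mathcal{B}_D^{-1}}$ for arbitrary $\bm{y}=(\bm{r}_h,q_h)$ by term-wise Cauchy--Schwarz and then reads off the operator norm by duality; this route needs only the norm equivalence $\|\cdot\|_{A^{-1}}\simeq\|\cdot\|_{D_A^{-1}}$ and never encounters the square of $S_D^{-1}S_A$. Your direct expansion of $\|\mathcal{B}_L\mathcal{A}\bm{x}\|_{\mathcal{B}_D^{-1}}^2$ instead forces you to control $\|S_D^{-1}S_A p_h\|_{S_D}^2$, which is why you need the additional (correct) observation that $S_D^{-1}S_A$ is $S_D$-self-adjoint so that the one-sided spectral inequality $S_A\le c_1^{-2}S_D$ lifts to a bound on the operator norm and hence on its square. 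Both approaches are standard and give parameter-independent constants depending only on shape regularity; the paper's duality route is marginally more economical precisely because it sidesteps the obstacle you identified.
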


	\begin{proof}
		By the definition of the linear operators $A$ and $D_A$, we naturally have $\| \bm{w}_h \|_a = \| \bm{w}_h \|_A$ and $\| \bm{w}_h \|_{a^D} = \| \bm{w}_h \|_{D_A}$, respectively. Here $\| \bm{w}_h \|_A^2 := \langle A \bm{w}_h, \bm{w}_h \rangle$ and $\| \bm{w}_h \|_{D_A}^2 := \langle D_A \bm{w}_h, \bm{w}_h \rangle$ for $\bm{w}_h \in \bm{W}_h$.
		
		Then Lemma~\ref{sec:4.lemma:norm_equi} states that the norms $  \| \cdot \|_{D_A}  $ and $ \| \cdot \|_{A} $ are equivalent, which also implies the equivalence between the norms $  \| \cdot \|_{D_A^{-1}}  $ and $ \| \cdot \|_{A^{-1}} $, which are defined as $\| \bm{w}'_h \|_{A^{-1}}^2 := \langle A^{-1} \bm{w}'_h, \bm{w}'_h \rangle$ and $\| \bm{w}'_h \|_{D_A^{-1}}^2 := \langle D_A^{-1} \bm{w}'_h, \bm{w}'_h \rangle$ for $\bm{w}'_h \in \bm{W}'_h$.
		
		Using that and Cauchy-Schwarz inequality, we have
			\begin{align*}
				(\mathcal{B}_L \mathcal{A} \bm{x}, \bm{x})_{\mathcal{B}_D^{-1}} 
				& = \| \bm{w}_h \|^2_A + \langle B^T p_h, \bm{w}_h \rangle + \| B A^{-1} B^T p_h \|^2 \\
				& \geq  \| \bm{w}_h \|^2_A - \| B^T p_h \|_{A^{-1}} \| \bm{w}_h \|_A + \| B^T p_h \|_{A^{-1}}^2 \\
				& = 
				\begin{pmatrix} \| \bm{w}_h \|_A \\  \| B^T p_h \|_{A^{-1}} \end{pmatrix}^T
				\begin{pmatrix}
					1 & -\frac{1}{2} \\
					-\frac{1}{2} & 1
				\end{pmatrix}
				\begin{pmatrix} \| \bm{w}_h \|_A \\  \| B^T p_h \|_{A^{-1}} \end{pmatrix} \\
				& \geq \dfrac{1}{2} ( \| \bm{w}_h \|^2_A + \| B^T p_h \|_{A^{-1}}^2  ) \\
				& \geq \dfrac{1}{2} ( \| \bm{w}_h \|^2_A + c_2^{-1} \| B^T p_h \|_{D_A^{-1}}^2  ) \\
				& \geq \xi_1 \| \bm{x} \|^2_{\mathcal{B}_D^{-1}},
			\end{align*}
			with $ \xi_1 =  \dfrac{1}{2} \min \left\{ 1,  c_2^{-1} \right\} $. On the other hand, again using the Cauchy-Schwarz inequality and equivalence of the norms $  \| \cdot \|_{D_A^{-1}}  $ and $ \| \cdot \|_{A^{-1}} $ we get
			\begin{align*}
				(\mathcal{B}_L \mathcal{A} \bm{x}, \bm{y})_{\mathcal{B}_D^{-1}} 
				& = \langle A \bm{w}_h, \bm{r}_h \rangle + \langle B^T p_h, \bm{r}_h \rangle + \langle B A^{-1} B^T p_h, q_h \rangle \\
				& \leq \| \bm{w}_h \|_A \| \bm{r}_h \|_A + \| B^T p_h \|_{A^{-1}} \| \bm{r}_h \|_A + \| B^T p_h \|_{A^{-1}} \| B^T q_h \|_{A^{-1}} \\
				& \leq \left(  \| \bm{w}_h \|^2_A + 2 \| B^T p_h \|^2_{A^{-1}} \right)^{\frac{1}{2}}
				\left( 2 \| \bm{r}_h \|^2_A + \| B^T q_h \|^2_{A^{-1}} \right)^{\frac{1}{2}} \\
				&  \leq \left(  \| \bm{w}_h \|^2_A + 2 c_1^{-1} \| B^T p_h \|_{D_A^{-1}}^2 \right)^{\frac{1}{2}}
				\left(  2 \| \bm{r}_h \|^2_A + c_1^{-1} \| B^T q_h \|_{D_A^{-1}}^2 \right)^{\frac{1}{2}} \\ 
				& \leq \xi_2 \| \bm{x} \|_{\mathcal{B}_D^{-1}} \| \bm{y} \|_{\mathcal{B}_D^{-1}},
			\end{align*}
			for each $ \bm{y} = (\bm{r}_h, q_h) \in \bm{W}_h \times Q_h, \bm{y} \not = \bm{0} $ with $ \xi_2 = \max \left\{ 2, 2 c_1^{-1} \right\} $, which concludes the proof.
	\end{proof}

	The next theorem states that if the conditions~\eqref{sec:4.eq:spectral_equi_blocks} hold then $\mathcal{M}_L$ and $\mathcal{A}$ are FOV-equivalent. 

	\begin{theorem}
		\label{sec:4.thm:FOV_lower_approx}
		If the conditions \eqref{sec:4.eq:spectral_equi_blocks} hold and $ \| I - H_{\bm{w}} A \|_A \leq \rho $ for $ 0 \leq \rho < 1 $, then there exist constants $ \xi_1, \xi_2 > 0 $ independent of discretization and physical parameters such that for every $ \bm{x} = (\bm{w}_h, p_h) \in \bm{W}_h \times Q_h $, $ \bm{x} \neq \bm{0} $,
		\begin{align*}
			\xi_1 \leq \dfrac{ (\mathcal{M}_L \mathcal{A} \bm{x}, \bm{x})_{\mathcal{M}_D^{-1}} }{ (\bm{x}, \bm{x})_{\mathcal{M}_D^{-1}} }, & \quad \text{ and } \quad
			\dfrac{ \| \mathcal{M}_L \mathcal{A} \bm{x} \|_{\mathcal{M}_D^{-1}} }{ \| \bm{x} \|_{\mathcal{M}_D^{-1}} } \leq \xi_2.
		\end{align*}
	\end{theorem}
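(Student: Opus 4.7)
The plan is to mimic the proof of Theorem~\ref{sec:4.thm:FOV_lower} but carefully track how the inexact blocks $H_{\bm{w}}$ and $H_p$ replace $A^{-1}$ and $(BD_A^{-1}B^T)^{-1}$. First I would write out the inverse explicitly,
\begin{equation*}
\mathcal{M}_L = \begin{pmatrix} H_{\bm{w}} & 0 \\ H_p B H_{\bm{w}} & H_p \end{pmatrix},
\end{equation*}
apply it to $\mathcal{A}\bm{x}=(A\bm{w}_h+B^Tp_h,-B\bm{w}_h)$, and expand the pairing in the $\mathcal{M}_D^{-1}=\mathrm{diag}(H_{\bm{w}}^{-1},H_p^{-1})$ inner product. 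The duality cross terms $\langle B^T p_h,\bm{w}_h\rangle$ and $\langle B\bm{w}_h,p_h\rangle$ cancel exactly as in the exact case, leaving the representation
\begin{equation*}
(\mathcal{M}_L\mathcal{A}\bm{x},\bm{x})_{\mathcal{M}_D^{-1}} = \|\bm{w}_h\|_A^2 + \langle H_{\bm{w}} A\bm{w}_h,B^Tp_h\rangle + \|B^T p_h\|_{H_{\bm{w}}}^2.
\end{equation*}

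The main obstacle, and the reason the extra hypothesis $\|I-H_{\bm{w}}A\|_A\le\rho<1$ is needed, is the cross term $\langle H_{\bm{w}} A\bm{w}_h, B^T p_h\rangle$, which in the exact case reduced to the trivial duality pairing. Here I would exploit that since $H_{\bm{w}}$ and $A$ are both symmetric positive definite, $H_{\bm{w}}A$ is $A$-self-adjoint with spectrum contained in $[1-\rho,1+\rho]$, so $\langle H_{\bm{w}}A\bm{w}_h,A\bm{w}_h\rangle\le(1+\rho)\|\bm{w}_h\|_A^2$. Combined with a Cauchy--Schwarz step $|\langle H_{\bm{w}}A\bm{w}_h,B^Tp_h\rangle|\le\|A\bm{w}_h\|_{H_{\bm{w}}}\|B^Tp_h\|_{H_{\bm{w}}}$ this gives the bound by $\sqrt{1+\rho}\,\|\bm{w}_h\|_A\|B^Tp_h\|_{H_{\bm{w}}}$. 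Substituting back yields a quadratic form whose associated $2\times2$ matrix has smallest eigenvalue $1-\tfrac12\sqrt{1+\rho}>0$, producing a uniform lower bound of $\|\bm{w}_h\|_A^2+\|B^Tp_h\|_{H_{\bm{w}}}^2$ up to a constant depending only on $\rho$.

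Next I would convert this lower bound into a multiple of $\|\bm{x}\|_{\mathcal{M}_D^{-1}}^2=\|\bm{w}_h\|_{H_{\bm{w}}^{-1}}^2+\|p_h\|_{H_p^{-1}}^2$. For the first block this uses \eqref{sec:4.eq:spectral_equi_blocks_a} in the inverted form $\|\bm{w}_h\|_A^2\ge c_{2,\bm{w}}^{-1}\|\bm{w}_h\|_{H_{\bm{w}}^{-1}}^2$. For the second, I chain the spectral equivalence of $H_{\bm{w}}$ and $A^{-1}$, Lemma~\ref{sec:4.lemma:norm_equi} relating $A^{-1}$ and $D_A^{-1}$, and \eqref{sec:4.eq:spectral_equi_blocks_b} relating $BD_A^{-1}B^T$ and $H_p^{-1}$, to obtain $\|B^Tp_h\|_{H_{\bm{w}}}^2\ge c_{2,\bm{w}}^{-1}c_2^{-2}c_{2,p}^{-1}\|p_h\|_{H_p^{-1}}^2$. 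Taking the minimum of the two factors yields the constant $\xi_1$, which depends only on $\rho$, the shape regularity (through $c_1,c_2$) and $c_{2,\bm{w}},c_{2,p}$, and hence is independent of the discretization and physical parameters.

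Finally, for the continuity estimate $\|\mathcal{M}_L\mathcal{A}\bm{x}\|_{\mathcal{M}_D^{-1}}/\|\bm{x}\|_{\mathcal{M}_D^{-1}}\le\xi_2$, I would test against an arbitrary $\bm{y}=(\bm{r}_h,q_h)$, obtain an expansion analogous to the coercivity one with five terms, and bound each by Cauchy--Schwarz in the appropriate norms: $\|\cdot\|_A$ paired with $\|B^T\!\cdot\|_{A^{-1}}$ for the duality pairings, and the same $\sqrt{1+\rho}$ bound from the $A$-spectrum of $H_{\bm{w}}A$ for terms containing $H_{\bm{w}}A$. Grouping terms as in Theorem~\ref{sec:4.thm:FOV_lower} and converting everything back to $\|\cdot\|_{H_{\bm{w}}^{-1}}$ and $\|\cdot\|_{H_p^{-1}}$ via the spectral equivalences $c_{1,\bm{w}},c_{1,p},c_1$ gives $\xi_2$ depending only on these constants and $\rho$, which finishes the proof.
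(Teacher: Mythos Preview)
Your proposal is correct and follows essentially the same route as the paper's proof: expand $(\mathcal{M}_L\mathcal{A}\bm{x},\bm{x})_{\mathcal{M}_D^{-1}}$ to obtain $\|\bm{w}_h\|_A^2+\langle H_{\bm{w}}A\bm{w}_h,B^Tp_h\rangle+\|B^Tp_h\|_{H_{\bm{w}}}^2$, control the cross term via the spectral hypothesis on $H_{\bm{w}}A$, apply a $2\times2$ eigenvalue argument, and then pass to the $\mathcal{M}_D^{-1}$-norm by chaining \eqref{sec:4.eq:spectral_equi_blocks} with Lemma~\ref{sec:4.lemma:norm_equi}. The only notable deviation is that you bound the cross term by $\sqrt{1+\rho}\,\|\bm{w}_h\|_A\|B^Tp_h\|_{H_{\bm{w}}}$ via Cauchy--Schwarz in the $H_{\bm{w}}$-inner product, whereas the paper uses the slightly cruder bound $(1+\rho)\,\|\bm{w}_h\|_A\|B^Tp_h\|_{H_{\bm{w}}}$ and correspondingly obtains $\xi_1=\tfrac{1-\rho}{2}\min\{c_{2,\bm{w}}^{-1},\,c_{1,\bm{w}}c_2^{-1}c_{2,p}^{-1}\}$; your constant is marginally sharper but the argument is otherwise identical.
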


	\begin{proof}
		From the assumptions of the theorem we have $ \| H_{\bm{w}} A \|_A \leq 1 + \rho $ in combination with Lemma \ref{sec:4.lemma:norm_equi}, \eqref{sec:4.eq:spectral_equi_blocks} and the Cauchy-Schwarz inequality, we have that
		\begin{align*}
			(\mathcal{M}_L \mathcal{A} \bm{x}, \bm{x})_{\mathcal{M}_D^{-1}} 
			& = \| \bm{w}_h \|^2_A + \langle B^T p_h, \bm{w}_h \rangle + \langle B(H_{\bm{w}}A - I) \bm{w}_h, p_h \rangle + \| B^T p_h \|_{H_{\bm{w}}}^2 \\
			& = \| \bm{w}_h \|^2_A + \langle H_{\bm{w}} A \bm{w}_h, B^T p_h \rangle + \| B^T p_h \|_{H_{\bm{w}}}^2 \\
			& \geq  \| \bm{w}_h \|^2_A - (1 + \rho) \| \bm{w}_h \|_A \| B^T p_h \|_{H_{\bm{w}}}  + \| B^T p_h \|_{H_{\bm{w}}}^2 \\
			& = 
			\begin{pmatrix} \| \bm{w}_h \|_A \\  \| B^T p_h \|_{H_{\bm{w}}} \end{pmatrix}^T
			\begin{pmatrix}
				1 & -\frac{ 1 + \rho }{2} \\
				-\frac{1 + \rho}{2} & 1
			\end{pmatrix}
			\begin{pmatrix} \| \bm{w}_h \|_A \\  \| B^T p_h \|_{H_{\bm{w}}} \end{pmatrix} \\
			& \geq \frac{ 1 - \rho }{2} ( \| \bm{w}_h \|^2_A + \| B^T p_h \|_{H_{\bm{w}}}^2  ) \\
			& \geq \frac{ 1 - \rho }{2} \left( c_{2, \bm{w}}^{-1} \| \bm{w}_h \|^2_{H_{\bm{w}}^{-1}} +  c_{1, \bm{w}} c_2^{-1} \| B^T p_h \|_{D_A^{-1}}^2  \right) \\
			& \geq \frac{ 1 - \rho }{2} \left( c_{2, \bm{w}}^{-1} \| \bm{w}_h \|^2_{H_{\bm{w}}^{-1}} + c_{1, \bm{w}} c_2^{-1} c_{2, p}^{-1} \| p_h \|_{H_p^{-1}}^2  \right) \\
			& \geq \xi_1 \| \bm{x} \|^2_{\mathcal{M}_D^{-1}},
		\end{align*}
		with $ \xi_1 = \frac{ 1 - \rho }{2} \min \left\{ c_{2, \bm{w}}^{-1}, c_{1, \bm{w}} c_2^{-1} c_{2, p}^{-1} \right\}  $. 
		
		Using the same conditions to show the upper bound, we obtain
		\begin{align*}
			(\mathcal{M}_L \mathcal{A} \bm{x}, \bm{y})_{\mathcal{M}_D^{-1}} 
			& = \langle A \bm{w}_h, \bm{r}_h \rangle + \langle B^T p_h, \bm{r}_h \rangle + \langle B(H_{\bm{w}}A - I) \bm{w}_h, q_h \rangle + \langle B H_{\bm{w}} B^T p_h, q_h \rangle\\
			& \leq \| \bm{w}_h \|_A \| \bm{r}_h \|_A + \| B^T p_h \|_{A^{-1}} \| \bm{r}_h \|_{A} + \| (H_{\bm{w}}A - I) \bm{w}_h \|_{A} \| B^T q_h \|_{A^{-1}} \\ 
			& \quad + \| B^T p_h \|_{H_{\bm{w}}} \| B^T q_h \|_{H_{\bm{w}}} \\
			& \leq \| \bm{w}_h \|_A \| \bm{r}_h \|_A + \| B^T p_h \|_{A^{-1}} \| \bm{r}_h \|_{A} + \rho \| \bm{w}_h \|_{A} \| B^T q_h \|_{A^{-1}} \\ 
			& \quad + \| B^T p_h \|_{H_{\bm{w}}} \| B^T q_h \|_{H_{\bm{w}}} \\
			& \leq \left( (1 + \rho^2 ) \| \bm{w}_h \|^2_A + \| B^T p_h \|^2_{A^{-1}} + \| B^T p_h \|^2_{H_{\bm{w}}} \right)^{\frac{1}{2}} \\
			& \quad \left( 2 \| \bm{r}_h \|^2_A + \| B^T q_h \|^2_{A^{-1}} + \| B^T q_h \|^2_{H_{\bm{w}}}  \right)^{\frac{1}{2}} \\
			& \leq \left( (1 + \rho^2 ) c^{-1}_{1, \bm{w}} \| \bm{w}_h \|^2_{H_{\bm{w}}^{-1}} + c^{-1}_1(1 + c^{-1}_{1, \bm{w}})  \| B^T p_h \|^2_{D_A^{-1}} \right)^{\frac{1}{2}} \\
			& \quad \left( 2 c^{-1}_{1, \bm{w}} \| \bm{r}_h \|^2_{H_{\bm{w}}^{-1}} + c^{-1}_1(1 + c^{-1}_{1, \bm{w}})  \| B^T q_h \|^2_{D_A^{-1}} \right)^{\frac{1}{2}} \\
			& \leq \left( (1 + \rho^2 ) c^{-1}_{1, \bm{w}} \| \bm{w}_h \|^2_{H_{\bm{w}}^{-1}} + c_{1,p}^{-1}c^{-1}_1(1 + c^{-1}_{1, \bm{w}})  \| p_h \|^2_{H_p^{-1}} \right)^{\frac{1}{2}}\\
			& \quad \left( 2 c^{-1}_{1, \bm{w}} \| \bm{r}_h \|^2_{H_{\bm{w}}^{-1}} + c_{1,p}^{-1}c^{-1}_1(1 + c^{-1}_{1, \bm{w}})  \| q_h \|^2_{H_p^{-1}} \right)^{\frac{1}{2}}  \\
			& \leq \xi_2 \| \bm{x} \|_{\mathcal{M}_D^{-1}} \| \bm{y} \|_{\mathcal{M}_D^{-1}}.
		\end{align*}
		This gives the upper bound with $ \xi_2 = \max \{ 2c_{1, \bm{w}}^{-1}, c_{1,p}^{-1}c_1^{-1}(1+c_{1,\bm{w}}^{-1}) \} $, which concludes the proof. 
	\end{proof}

	\begin{remark}
		\label{sec:4.rem:robustness_lower}
		Due to Lemma~\ref{sec:4.lemma:norm_equi}, the constants $\xi_1$ and $\xi_2$ are independent of $ h $ and parameters $ K $, $K_{\bm{\nu}}$ and $ \gamma $, but remain dependent on the shape regularity of the mesh. This means that the convergence rate of the preconditioned GMRES method with preconditioner $\mathcal{B}_L$ or $\mathcal{M}_L$ depends only on the shape regularity of the mesh.
	\end{remark}

%

	Similarly, we can derive the FOV-equivalence of $ \mathcal{B}_U $ and $ \mathcal{M}_U $ with $ \mathcal{A} $. Since the proofs are similar to the two previous theorems, we omit them and only state the results here. 

	\begin{theorem}
		\label{sec:5.thm:FOV_upper}
		There exist constants $ \xi_1, \xi_2 > 0 $ independent of discretization and physical parameters such that for any $\bm{x}' = \mathcal{B}_U^{-1} \bm{x}$ with $\bm{x} = (\bm{w}_h, p_h) \in \bm{W}_h \times Q_h $, $\bm{x} \neq \bm{0}$,
		\begin{align*}
			\xi_1 \leq \dfrac{ ( \mathcal{A} \mathcal{B}_U \bm{x}', \bm{x}')_{\mathcal{B}_D} }{ (\bm{x}', \bm{x}')_{\mathcal{B}_D} }, & \quad \text{ and } \quad 
			\dfrac{ \| \mathcal{A} \mathcal{B}_U \bm{x}' \|_{\mathcal{B}_D} }{ \| \bm{x}' \|_{\mathcal{B}_D} } \leq \xi_2.
		\end{align*}
	\end{theorem}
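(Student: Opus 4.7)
The plan is to mirror the proof of Theorem \ref{sec:4.thm:FOV_lower} for the right preconditioner $\mathcal{B}_U$, the main conceptual difference being that FOV-equivalence is now formulated with respect to the inner product $(\cdot,\cdot)_{\mathcal{B}_D}$ on the dual space $\bm{X}'$ instead of $(\cdot,\cdot)_{\mathcal{B}_D^{-1}}$ on $\bm{X}$. First I would introduce the substitution $\bm{x} = \mathcal{B}_U \bm{x}'$ with $\bm{x} = (\bm{w}_h, p_h)$, so that $\bm{x}' = \mathcal{B}_U^{-1}\bm{x} = (A\bm{w}_h + B^T p_h,\, B D_A^{-1} B^T p_h)$ and $\mathcal{A}\bm{x} = (A\bm{w}_h + B^T p_h,\, -B\bm{w}_h)$. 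Pairing these through $\mathcal{B}_D = \mathrm{diag}(A^{-1}, (B D_A^{-1} B^T)^{-1})$, the off-diagonal cross terms cancel partially, yielding the identity
\begin{equation*}
(\mathcal{A}\mathcal{B}_U \bm{x}', \bm{x}')_{\mathcal{B}_D} = \|\bm{w}_h\|_A^2 + \langle B^T p_h, \bm{w}_h\rangle + \|B^T p_h\|_{A^{-1}}^2,
\end{equation*}
which is precisely the quantity that drove the lower bound in Theorem \ref{sec:4.thm:FOV_lower}.

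For the lower bound $\xi_1$, I would control the cross term by Cauchy--Schwarz and invoke the same positive-definite $2\times 2$ quadratic form (with smallest eigenvalue $\tfrac{1}{2}$) to deduce $(\mathcal{A}\mathcal{B}_U \bm{x}', \bm{x}')_{\mathcal{B}_D} \geq \tfrac{1}{2}\bigl(\|\bm{w}_h\|^2_A + \|B^T p_h\|^2_{A^{-1}}\bigr)$. The denominator expands to $\|\bm{x}'\|^2_{\mathcal{B}_D} = \|\bm{w}_h\|^2_A + 2\langle B^T p_h, \bm{w}_h\rangle + \|B^T p_h\|^2_{A^{-1}} + \|B^T p_h\|^2_{D_A^{-1}}$; bounding the extra cross term by Cauchy--Schwarz and applying the dual form of Lemma \ref{sec:4.lemma:norm_equi}, i.e., $c_1\|\cdot\|_{A^{-1}} \leq \|\cdot\|_{D_A^{-1}} \leq c_2\|\cdot\|_{A^{-1}}$ (which follows from the primal equivalence via the SPD ordering $c_1^2 D_A \leq A \leq c_2^2 D_A$), bounds $\|\bm{x}'\|^2_{\mathcal{B}_D}$ above by a constant multiple of $\|\bm{w}_h\|^2_A + \|B^T p_h\|^2_{A^{-1}}$, producing $\xi_1 > 0$ depending only on $c_1, c_2$.

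For the upper bound $\xi_2$, I would evaluate $(\mathcal{A}\mathcal{B}_U \bm{x}', \bm{y}')_{\mathcal{B}_D}$ for an arbitrary $\bm{y}' = \mathcal{B}_U^{-1}\bm{y}$ with $\bm{y} = (\bm{r}_h, q_h) \in \bm{W}_h \times Q_h$, bound each of the three resulting bilinear pieces by Cauchy--Schwarz, and once again use the dual norm equivalence to pass between $\|\cdot\|_{A^{-1}}$ and $\|\cdot\|_{D_A^{-1}}$. This yields $\|\mathcal{A}\mathcal{B}_U \bm{x}'\|_{\mathcal{B}_D} \leq \xi_2\|\bm{x}'\|_{\mathcal{B}_D}$ with $\xi_2$ again depending only on $c_1, c_2$, and hence only on the shape regularity of the mesh.

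The main obstacle is purely bookkeeping: the substitution $\bm{x} = \mathcal{B}_U\bm{x}'$, the off-diagonal block in $\mathcal{B}_U^{-1}$, the duality pairing, and the block structure of $\mathcal{B}_D$ must be assembled in the right order, and one must apply either the primal or the dual half of Lemma \ref{sec:4.lemma:norm_equi} depending on whether a given norm lives on $\bm{W}_h$ or on $\bm{W}_h'$. Once this is aligned, the argument is a direct mirror of the lower-triangular case.
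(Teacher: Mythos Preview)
Your proposal is correct and follows exactly the approach the paper intends: the paper explicitly omits the proof, stating it is ``similar to the two previous theorems,'' and your argument mirrors the proof of Theorem~\ref{sec:4.thm:FOV_lower} via the substitution $\bm{x}=\mathcal{B}_U\bm{x}'$, the same $2\times 2$ quadratic-form trick for the lower bound, and the dual version of Lemma~\ref{sec:4.lemma:norm_equi} to pass between $\|\cdot\|_{A^{-1}}$ and $\|\cdot\|_{D_A^{-1}}$. Your key identity $(\mathcal{A}\mathcal{B}_U\bm{x}',\bm{x}')_{\mathcal{B}_D}=\|\bm{w}_h\|_A^2+\langle B^Tp_h,\bm{w}_h\rangle+\|B^Tp_h\|_{A^{-1}}^2$ and the expansion of $\|\bm{x}'\|_{\mathcal{B}_D}^2$ are both correct, and the only additional step beyond Theorem~\ref{sec:4.thm:FOV_lower}---bounding $\|\bm{x}'\|_{\mathcal{B}_D}^2$ above and below by $\|\bm{w}_h\|_A^2+\|B^Tp_h\|_{A^{-1}}^2$ using the completed square $\|\bm{w}_h+A^{-1}B^Tp_h\|_A^2$ and the dual norm equivalence---is routine, as you indicate.
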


	\begin{theorem}
		\label{sec:4.thm:FOV_upper_approx}
		If the conditions \eqref{sec:4.eq:spectral_equi_blocks} hold and $ \| I - H_{\bm{w}} A \|_A \leq \rho $ for $ 0 \leq \rho < 1 $, then there exist constants $ \xi_1, \xi_2 > 0 $ independent of discretization and physical parameters such that for any $\bm{x}' = \mathcal{M}_U^{-1} \bm{x}$ with $\bm{x} = (\bm{w}_h, p_h) \in \bm{W}_h \times Q_h $, $\bm{x} \neq \bm{0}$,
		\begin{align*}
			\xi_1 \leq \dfrac{ (\mathcal{A} \mathcal{M}_U  \bm{x}', \bm{x}')_{\mathcal{M}_D} }{ (\bm{x}', \bm{x}')_{\mathcal{M}_D} }, & \quad \text{ and } \quad
			\dfrac{ \| \mathcal{A} \mathcal{M}_U \bm{x}' \|_{\mathcal{M}_D} }{ \| \bm{x}' \|_{\mathcal{M}_D} } \leq \xi_2.
		\end{align*}
	\end{theorem}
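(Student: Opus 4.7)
The plan is to mirror the argument of Theorem~\ref{sec:4.thm:FOV_lower_approx}, adapted to the right preconditioner $\mathcal{M}_U$ acting on the dual space. First I would invert the block upper triangular operator to obtain
\begin{equation*}
	\mathcal{M}_U = \begin{pmatrix} H_{\bm{w}} & -H_{\bm{w}} B^T H_p \\ 0 & H_p \end{pmatrix},
\end{equation*}
form $\mathcal{A}\mathcal{M}_U$ explicitly, and write $\bm{x}' = (\bm{f}, \bm{g}) \in \bm{W}_h' \times Q_h'$ with the substitution $\tilde{\bm{w}} := H_{\bm{w}}\bm{f}$, $\tilde{p} := H_p \bm{g}$. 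Expanding
\begin{equation*}
    (\mathcal{A}\mathcal{M}_U \bm{x}', \bm{x}')_{\mathcal{M}_D} = \langle \mathcal{A}\mathcal{M}_U \bm{x}', \mathcal{M}_D \bm{x}'\rangle
\end{equation*}
and observing that the two off-diagonal duality pairings $\langle B^T\tilde{p}, \tilde{\bm{w}}\rangle$ and $\langle B\tilde{\bm{w}}, \tilde{p}\rangle$ cancel, the expression reduces to $\|\tilde{\bm{w}}\|_A^2 - \langle A H_{\bm{w}} B^T\tilde{p}, \tilde{\bm{w}}\rangle + \|B^T\tilde{p}\|_{H_{\bm{w}}}^2$.

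For the lower bound, I would use the symmetry of $A$ and $H_{\bm{w}}$ to rewrite the cross term as $(H_{\bm{w}} B^T\tilde{p}, \tilde{\bm{w}})_A$, then apply the assumption $\|I - H_{\bm{w}} A\|_A \leq \rho$ (which implies $\|H_{\bm{w}}\bm{r}\|_A \leq (1+\rho)\|\bm{r}\|_{A^{-1}}$ for any $\bm{r} \in \bm{W}_h'$), combined with Cauchy--Schwarz and the spectral equivalences of Lemma~\ref{sec:4.lemma:norm_equi} and \eqref{sec:4.eq:spectral_equi_blocks_a}, to bound $|\langle AH_{\bm{w}}B^T\tilde{p}, \tilde{\bm{w}}\rangle|$ by a constant multiple of $\|\tilde{\bm{w}}\|_A \|B^T\tilde{p}\|_{H_{\bm{w}}}$. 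The same $2 \times 2$ positive-definite quadratic form argument used in Theorem~\ref{sec:4.thm:FOV_lower_approx} then yields a lower bound proportional to $\|\tilde{\bm{w}}\|_A^2 + \|B^T\tilde{p}\|_{H_{\bm{w}}}^2$, which I would compare with $\|\bm{x}'\|_{\mathcal{M}_D}^2 = \|\tilde{\bm{w}}\|_{H_{\bm{w}}^{-1}}^2 + \|\tilde{p}\|_{H_p^{-1}}^2$ using~\eqref{sec:4.eq:spectral_equi_blocks}, Lemma~\ref{sec:4.lemma:norm_equi}, and the identity $B D_A^{-1} B^T = B\bm{D}_h$, producing the constant $\xi_1$. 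The upper bound $\xi_2$ follows from an analogous Cauchy--Schwarz expansion of $(\mathcal{A}\mathcal{M}_U \bm{x}', \bm{y}')_{\mathcal{M}_D}$ for arbitrary $\bm{y}'$, precisely mirroring the $\xi_2$ computation in Theorem~\ref{sec:4.thm:FOV_lower_approx} with the roles of input and output sides swapped.

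The main obstacle is handling the composition $AH_{\bm{w}}B^T$ in the cross term: unlike the lower triangular case, where $H_{\bm{w}}A$ appears naturally and $\|I - H_{\bm{w}}A\|_A \leq \rho$ applies directly on the input side, here one must transfer the bound to the output side through the symmetry of $A$ and $H_{\bm{w}}$ and then carefully propagate all constants through the chain of spectral equivalences (the $c_1, c_2$ from Lemma~\ref{sec:4.lemma:norm_equi} and the $c_{1,\bm{w}}, c_{2,\bm{w}}, c_{1,p}, c_{2,p}$ from~\eqref{sec:4.eq:spectral_equi_blocks}), so as to ensure the resulting $\xi_1, \xi_2$ remain independent of $h$, $K$, $K_{\bm{\nu}}$, and $\gamma$.
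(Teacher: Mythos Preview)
Your proposal is correct and is precisely what the paper intends: the paper omits the proof of Theorem~\ref{sec:4.thm:FOV_upper_approx} entirely, stating only that it is similar to Theorems~\ref{sec:4.thm:FOV_lower} and~\ref{sec:4.thm:FOV_lower_approx}, and your sketch supplies exactly that mirrored argument. One remark: the ``main obstacle'' you flag is less serious than you suggest, since the symmetry of $A$ and $H_{\bm{w}}$ gives $\langle A H_{\bm{w}} B^T\tilde{p},\tilde{\bm{w}}\rangle=\langle B^T\tilde{p},H_{\bm{w}}A\tilde{\bm{w}}\rangle$, which is literally the same cross term as in Theorem~\ref{sec:4.thm:FOV_lower_approx} with $(\tilde{\bm{w}},\tilde{p})$ in place of $(\bm{w}_h,p_h)$, so the bound $(1+\rho)\|\tilde{\bm{w}}\|_A\|B^T\tilde{p}\|_{H_{\bm{w}}}$ and the subsequent $2\times 2$ quadratic form argument carry over verbatim without any additional chain of equivalences.
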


	\begin{remark}
		\label{sec:4.rem:robustness_lower_inexact}
		Similarly, the constants $\xi_1$ and $\xi_2$ here are independent of $ h $ and parameters $ K $, $K_{\bm{\nu}}$ and $ \gamma $, but remain dependent on the shape regularity of the mesh. This means that the convergence rate of the preconditioned GMRES method with preconditioner $\mathcal{B}_U$ or $\mathcal{M}_U$ depends only on the shape regularity of the mesh.
	\end{remark}

	
	\section{Numerical results}
	\label{sec:results}
	In this section, we propose several test cases to verify the theory on the robustness of the preconditioners derived above. Both two and three dimensional examples emphasize common challenges in fracture flow simulations such as large aspect ratios of rock and fractures, complex fracture network structures and high heterogeneity in the permeability fields.
	
	In each example below, a set of mixed-dimensional simplicial grids is generated on rock and fracture subdomains, where the coupling between the rock and fracture is employed by a separate mortar grid. Since our main objective is to show the robustness of our preconditioners for standard Krylov iterative methods, for the sake of simplicity, we take the mortar grid to be matching with the adjacent subdomain grids. However, the theory in Section \ref{sec:precond_MD} shows no restrictions to relative grid resolution between the rock, fracture and mortar grids. Furthermore, in \cite{Nordbotten2018} the discrete system remains well-posed with varying coarsening/refinement ratio for non-degenerate (normal) permeability values, which is one of our assumptions. Therefore, we expect that our block preconditioners give similar performance for general grids between the rock, fracture, and coupling part. 
		
	To solve the system \eqref{sec:2.eq:block_form_h}, we use a Flexible Generalized Minimal Residual (FGMRES) method as an \textit{outer} iterative solver, with the tolerance for the relative residual set to $ 10^{-6} $. The block preconditioners designed in Section \ref{sec:precond_MD} are used to accelerate the convergence rate of FGMRES. Each preconditioner $ \mathcal{B}_D, \mathcal{B}_L $ and $ \mathcal{B}_U $ requires inversion of the diagonal blocks corresponding to flux and pressure degrees of freedom, while the spectrally equivalent versions $ \mathcal{M}_D, \mathcal{M}_L $ and $ \mathcal{M}_U $ approximate the inverses with appropriate iterative methods. For that, we implement both exact and inexact \textit{inner} solvers. Solving each diagonal blocks exactly means we use the GMRES method with a relative residual tolerance set to $ 10^{-10}$, while in the inexact case it is set to $ 10^{-3} $. Inner GMRES is preconditioned with unsmoothed aggregation Algebraic Multigrid method (AMG) in a W-cycle.
	
	For obtaining the mixed-dimensional geometry and discretization, we use the PorePy library \cite{Keilegavlen2017a}, an open-source simulation tool for fractured and deformable porous media written in Python. Our preconditioners are implemented in HAZMATH library \cite{hazmath}, a finite element solver library written in C, also where all solving computations are performed. The numerical tests were performed on a workstation with an $8$-core 3GHz Intel Xeon ``Sandy Bridge" CPU and 256 GB of RAM.

	
	\subsection{Example: two-dimensional Geiger network}
	\label{subsec:example1}
	
	In the first example, we consider the test case presented in the benchmark study \cite{Flemisch2016a}. The domain $ \Omega = (0, 1)^2 $, depicted in Figure \ref{fig:geiger_domain}, has unitary permeability $ K = \bm{I} $ for the rock matrix and it is divided into 10 sub-domains by a set of fractures with aperture $ \gamma $. In our case, we set the tangential and normal permeability of the fractures to be constant throughout the whole network, and vary the value from blocking to conducting the flow. The tangential fracture permeability is denoted as $ K_f $ to avoid confusion with the rock permeability. At the boundary, we impose zero flux condition on the top and bottom, unitary pressure on the right, and flux equal to $-1$ on the left. The boundary conditions are applied to both the rock matrix and the fracture network. The numerical solution to this problem is also illustrated in Figure \ref{fig:geiger_domain}.
	
	\begin{figure}[htbp]
		\centering
		\includegraphics[width=0.4\textwidth]{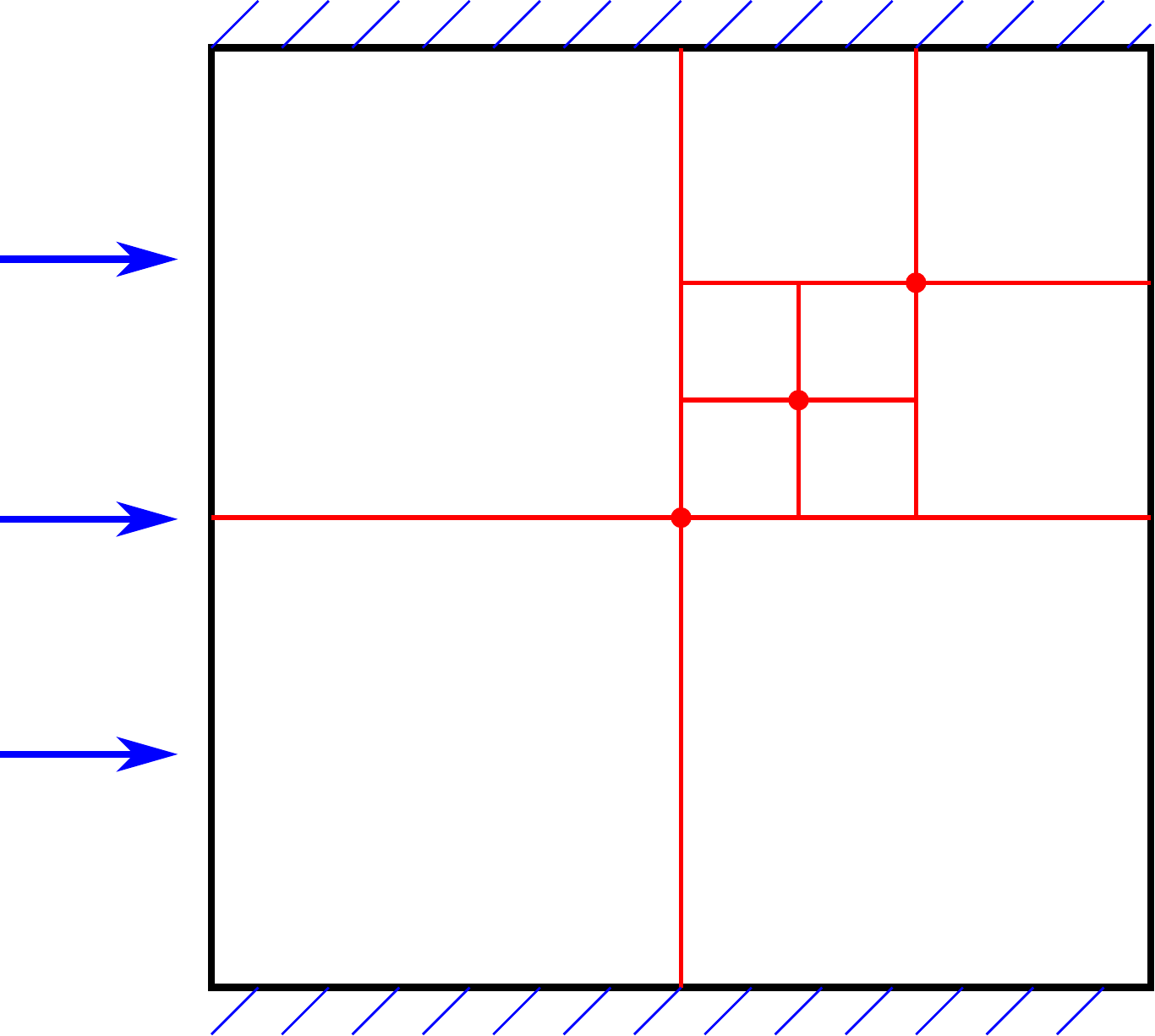}
		\hspace{0.5cm}
		\includegraphics[width=0.45\textwidth]{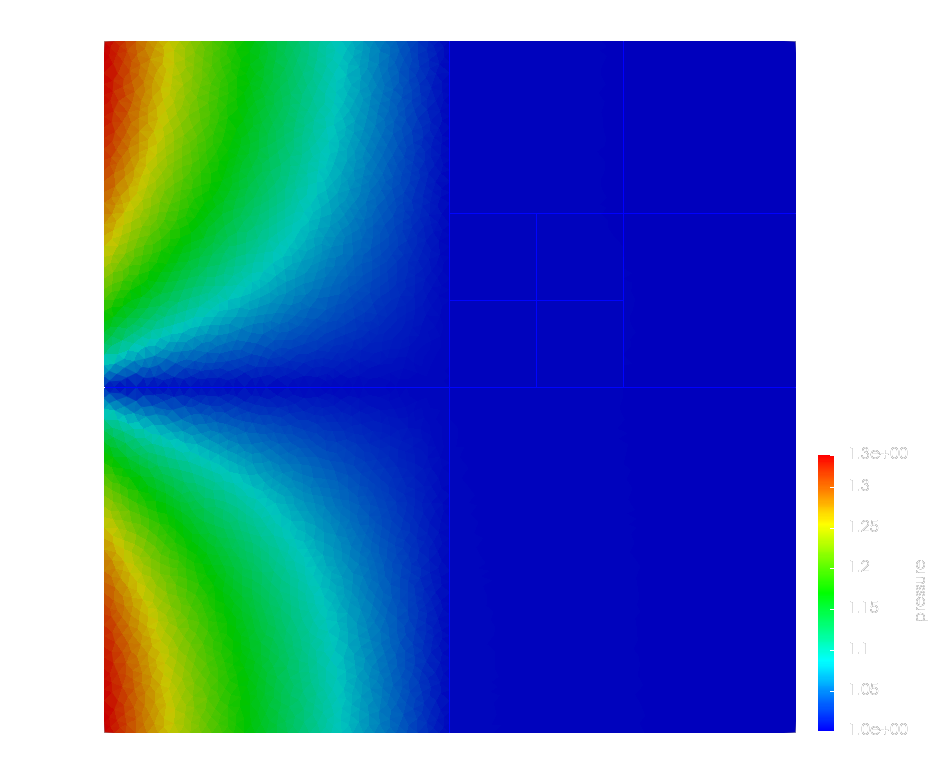}
		\caption{(Left) Graphical representation of the domain and fracture network
			geometry of Example \ref{subsec:example1}. (Right) Pressure solution for a case of conducting fractures.}
		\label{fig:geiger_domain}
	\end{figure}
	
	Our goal is to investigate the robustness of the block preconditioners with respect to discretization parameter $ h $ and physical parameters $ \gamma $, $ K_f $ and $ K_{\bm{\nu}} $. To this end, we generate a series of tests in which we vary the magnitude of one of the parameters, while setting others to a fixed value. This also  tests the heterogeneity ratios between the porous medium and the fractures, since we keep spatial and physical parameters of the porous medium unitary. We compute and compare number of iterations of the outer solver for both exact and inexact implementations of the proposed preconditioners. This way we clearly see if the stability of the proposed preconditioners depends on one or a combination of given parameters. 
	
	\begin{table}[htbp]
		\centering
		\begin{tabular}{r|r|r|r||r|r|r|}
			\cline{2-7}
			& \multicolumn{3}{c||}{\textit{Inexact}} & \multicolumn{3}{c|}{\textit{Exact}} \\
			\hline \hline
			\multicolumn{1}{|c||}{$h$} & $\mathcal{M}_D$ & $\mathcal{M}_L$ & $\mathcal{M}_U$ & $\mathcal{B}_D$ & $\mathcal{B}_L$ & $\mathcal{B}_U$ \\ \hline
			\multicolumn{1}{|l||}{$1/4$} & 20 & 13 & 12 & 19 & 10 & 10  \\ \hline
			\multicolumn{1}{|l||}{$1/8$} & 19 & 13 & 11 & 19 & 10 & 10    \\ \hline
			\multicolumn{1}{|l||}{$1/16$} & 19 & 13 & 11 & 19 & 10 & 10   \\ \hline
			\multicolumn{1}{|l||}{$1/32$} & 19 & 13 & 11 & 19 & 10 & 10    \\ \hline
			\multicolumn{1}{|l||}{$1/64$} & 19 & 13 & 11 & 19 & 10 & 10    \\ \hline
		\end{tabular}
		\caption{Number of iterations of outer FGMRES solver with exact and inexact block preconditioners for the case study in Example \ref{subsec:example1}. Varying mesh size $h$ while aperture is set to $ \gamma = 1/100 $ and all the permeabilities are set to $ K = K_f = K_{\bm{\nu}} = \bm{I} $.}%
		\label{tab:example1_results_h}
	\end{table}
	
		\begin{table}[htbp]
			\centering
			\begin{tabular}{r|r|r|r||r|r|r|}
				\cline{2-7}
				& \multicolumn{3}{c||}{\textit{Inexact}} & \multicolumn{3}{c|}{\textit{Exact}} \\
				\hline \hline
				\multicolumn{1}{|c||}{$\gamma$} & $\mathcal{M}_D$ & $\mathcal{M}_L$ & $\mathcal{M}_U$ & $\mathcal{B}_D$ & $\mathcal{B}_L$ & $\mathcal{B}_U$ \\ \hline
				\multicolumn{1}{|l||}{$1$} & 21 & 16 & 14 & 21 & 11 & 11  \\ \hline
				\multicolumn{1}{|l||}{$1/10$} & 19 & 13 & 12 & 19 & 10 & 10    \\ \hline
				\multicolumn{1}{|l||}{$1/100$} & 19 & 13 & 11 & 19 & 10 & 10   \\ \hline
				\multicolumn{1}{|l||}{$1/1000$} & 19 & 13 & 11 & 19 & 10 & 10  \\ \hline
				\multicolumn{1}{|l||}{$1/10000$} & 19 & 13 & 11 & 19 & 10 & 10    \\ \hline
			\end{tabular}
			\caption{Number of outer iterations of FGMRES solver with exact and inexact block preconditioners for the case study in Example \ref{subsec:example1}. Varying aperture $\gamma$ while mesh size is set to $ h = 1/16 $ and all the permeabilities are set to $ K = K_f = K_{\bm{\nu}} = \bm{I} $.}%
			\label{tab:example1_results_aperture}
		\end{table}
	
	The results of these robustness tests on are summarized in Tables \ref{tab:example1_results_h} -- \ref{tab:example1_results_perm}. We start with setting $ K_f = K_{\bm{\nu}} = \bm{I} $ that, together with rock permeability $ K $, gives a global homogeneous unitary permeability field. We also fix the aperture to $ \gamma = 10^{-2} $. Refining the initial coarse grid by a factor of 2 recursively, Table \ref{tab:example1_results_h} demonstrates the robustness of all block preconditioners with respect to the mesh size $ h $. Additionally, the different implementations of the preconditioners result in similar behavior of the solver. We notice that the block triangular preconditioners $ \mathcal{B}_L $ and $ \mathcal{B}_U $ show a slightly better performance compared the block diagonal $ \mathcal{B}_D $ as expected. The same behavior can be observed for inexact preconditioners $ \mathcal{M}_L $ and $ \mathcal{M}_U $ in comparison to $ \mathcal{M}_D $. This is expected since the block triangular preconditioners better approximate the inverse of the stiffness matrix in  \eqref{sec:2.eq:block_form_h}. It is noteworthy to mention that the action of the block triangular preconditioners is more expensive computationally than the action of the block diagonal preconditioners.
	Similar performance can also be observed in Table \ref{tab:example1_results_aperture}, where we scale down the fracture width on a fixed grid of mesh size $ h = 1/16 $. Lastly, in Table \ref{tab:example1_results_perm} we test the influence of the heterogeneity in the permeability fields. We keep the mesh size to be $ h = 1/16 $ and fracture aperture to be $ \gamma = 10^{-2} $, while introducing both conducting and blocking fracture network in the porous medium. Again, the robustness is evident in terms of the number of outer FGMRES iterations with both exact and inexact block preconditioners.  The block triangular preconditioners, $ \mathcal{B}_L $, $ \mathcal{B}_U $, $ \mathcal{M}_L $, and $ \mathcal{M}_U $, provide somewhat lower values comparing to their block diagonal counterpart.
		
	\begin{table}[htbp]
		\centering
		\begin{tabular}{r|r|r|r||r|r|r|}
			\cline{2-7}
			& \multicolumn{3}{c||}{\textit{Inexact}} & \multicolumn{3}{c|}{\textit{Exact}} \\
			\hline \hline
			\multicolumn{1}{|c||}{$K$} & $\mathcal{M}_D$ & $\mathcal{M}_L$ & $\mathcal{M}_U$ & $\mathcal{B}_D$ & $\mathcal{B}_L$ & $\mathcal{B}_U$ \\ \hline
			\multicolumn{1}{|l||}{$K_f = 10^{-4}\bm{I}, K_{\bm{\nu}} = 10^{-4}\bm{I}$} & 13 & 10 & 8 & 11 & 7 & 6  \\ \hline
			\multicolumn{1}{|l||}{$K_f = 10^{-4}\bm{I}, K_{\bm{\nu}} = \bm{I}$} & 13 & 8 & 8 & 13 & 7 & 7  \\ \hline
			\multicolumn{1}{|l||}{$K_f = 10^{-4}\bm{I}, K_{\bm{\nu}} = 10^{4}\bm{I}$} & 13 & 8 & 8 & 13 & 7 & 7  \\ \hline
			\multicolumn{1}{|l||}{$K_f = \bm{I}, K_{\bm{\nu}} = 10^{-4}\bm{I}$} & 22 & 16 & 13 & 19 & 11 & 10  \\ \hline
			\multicolumn{1}{|l||}{$K_f = \bm{I}, K_{\bm{\nu}} = \bm{I}$} & 19 & 13 & 11 & 19 & 10 & 10  \\ \hline
			\multicolumn{1}{|l||}{$K_f = \bm{I}, K_{\bm{\nu}} = 10^{4}\bm{I}$} & 19 & 13 & 12 & 19 & 10 & 10  \\ \hline
			\multicolumn{1}{|l||}{$K_f = 10^{4}\bm{I}, K_{\bm{\nu}} = 10^{-4}\bm{I}$} & 26 & 19 & 19 & 21 & 13 & 12  \\ \hline
			\multicolumn{1}{|l||}{$K_f = 10^{4}\bm{I}, K_{\bm{\nu}} = \bm{I}$} & 23 & 17 & 15 & 23 & 13 & 12  \\ \hline
			\multicolumn{1}{|l||}{$K_f = 10^{4}\bm{I}, K_{\bm{\nu}} = 10^{4}\bm{I}$} & 23 & 17 & 15 & 23 & 14 & 12  \\ \hline
		\end{tabular}
		\caption{Number of outer iterations of FGMRES solver with exact and inexact block preconditioners for the case study in Example \ref{subsec:example1}. Varying the permeability $K_f$ and $K_{\bm{\nu}}$ while mesh size is set to $ h = 1/16 $ and aperture is set to $ \gamma = 1/100 $.}%
		\label{tab:example1_results_perm}
	\end{table}


	\subsection{Example: two-dimensional complex network}
	\label{subsec:example2}

	This example is chosen to demonstrate the robustness of the block preconditioners on a more realistic fracture network. Such a complex fracture configuration often occurs in geological rock simulations and the geometrical and physical properties of the fracture network can significantly influence the stability of the solving method. This is especially seen in mpartitioning the fractured porous medium domain where sharp tips and very acute intersections may decrease the shape regularity of the mesh. Since our analysis shows that the performance of our block preconditioners only depends on the shape regularity of the mesh, for this complex network example, we expect to see that the preconditioners are still robust with respect to physical and discretization parameters, but slightly more iterations may be required due to the worse shape regularity of the mesh when comparing to Example~\ref{subsec:example1}.
	
	
	\begin{figure}[htbp]
		\centering
		\includegraphics[valign=c, width=0.435\textwidth]{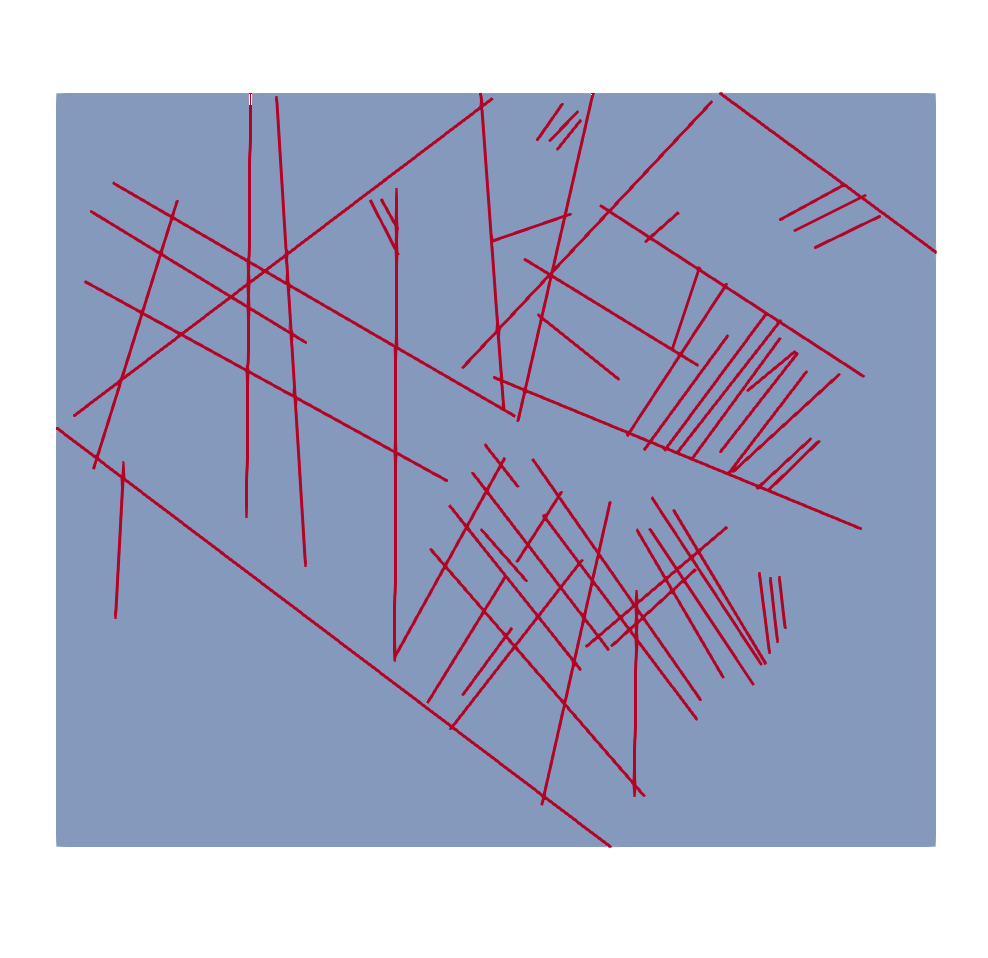}
		\includegraphics[valign=c, width=0.50\textwidth]{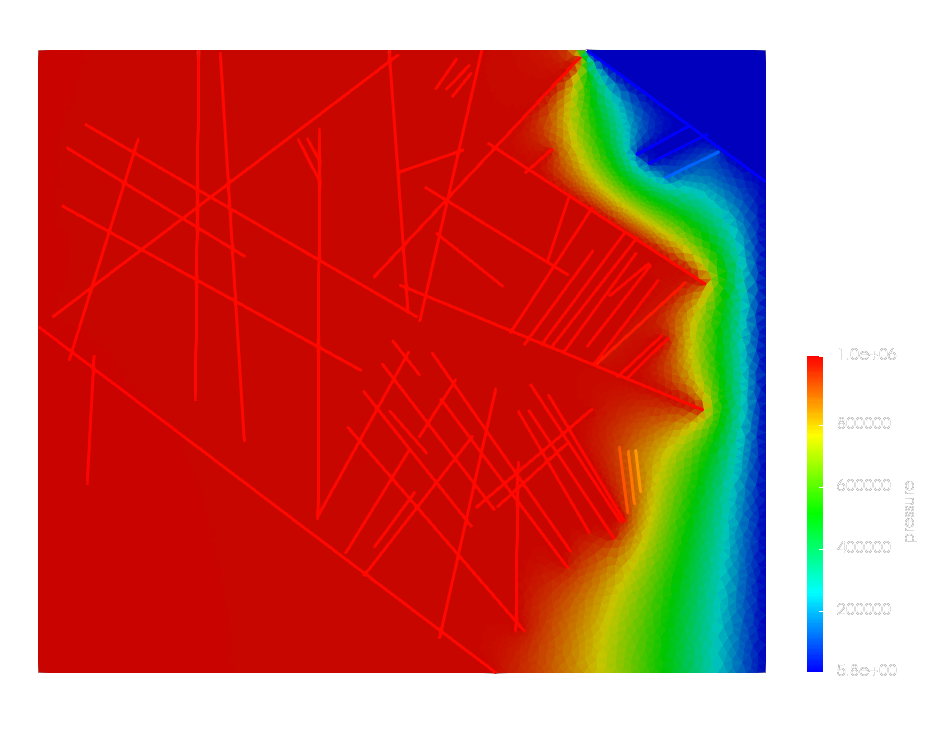}
		\caption{(Left) Graphical representation of the two-dimensional domain and fracture network
			geometry of Example \ref{subsec:example2}. (Right) Pressure solution for a case of conducting fractures.}
		\label{fig:sotra_2d_domain}
	\end{figure} 
	
	This example is chosen from benchmark study \cite{Flemisch2016a} -- a set of fractures from an interpreted outcrop in the Sotra island, near Bergen in Norway. The set includes 64 fractures grouped in 13 different connected networks. The porous medium domain has size $ 700 $ m $ \times $ $ 600 $ m with uniform matrix permeability $ K = 10^{-14} \bm{I} $ m$ ^2 $.  All  the  fractures  have  the  same  scalar  permeability $ K_f = 10^{-8} \bm{I} $ m$ ^2 $ and aperture $ \gamma = 10^{-2} $ m.  Also, no-flow boundary condition are imposed on top and bottom, with pressure $ 1013250 $ Pa on the left and $ 0 $ Pa on the right boundary. 
	
	\begin{figure}[htbp]
		\centering
		\includegraphics[valign=c, width=0.35\textwidth]{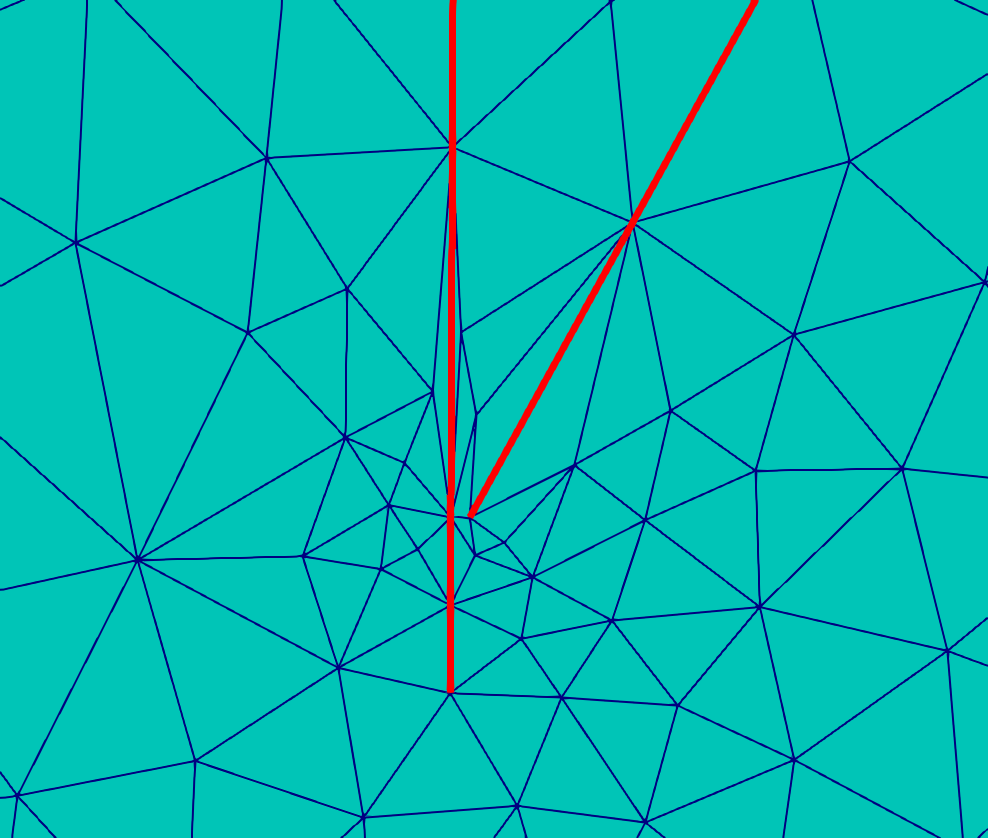}
		\hskip 1cm
		\begin{tabular}{r|r|r|r|}
			\cline{2-4}
			& \multicolumn{3}{c|}{\textit{Inexact}} \\
			\hline \hline
			\multicolumn{1}{|c||}{$h$} & $\mathcal{M}_D$ & $\mathcal{M}_L$ & $\mathcal{M}_U$ \\ \hline
			\multicolumn{1}{|l||}{$L/4$} & 63 & 51 & 40   \\ \hline
			\multicolumn{1}{|l||}{$L/8$} & 67 & 50 & 44   \\ \hline
			\multicolumn{1}{|l||}{$L/16$} & 61 & 47 & 42    \\ \hline
			\multicolumn{1}{|l||}{$L/32$} & 55 & 39 & 34     \\ \hline
			\multicolumn{1}{|l||}{$L/64$} & 47 & 33 & 29     \\ \hline
		\end{tabular}
		\caption{(Left) Mesh around one of the complex tips in the fracture network, where $ h = L/64 $. (Right) Number of outer iterations of FGMRES solver with exact and inexact block preconditioners for the case study in Example \ref{subsec:example2}. We refine the mesh relatively to domain length $ L = 600 $.}%
		\label{fig:example2_results_h}
	\end{figure}

	For the comparison with the previous example, we refine the mesh size $ h $ with respect to the width of the domain $ L = 600 $. However, due to the complex fracture structure, it is possible to end up with smaller and badly shaped elements in the rock matrix grid around the tips and intersections of the fractures. For example, see Figure~\ref{fig:example2_results_h} on the left. The coarser the mesh is, the more irregular the elements are, especially when partisioning in between many tightly packed fractures. Therefore, we expect that the solver requires more iterations to converge on coarser meshes. This is evident in the table on the right in Figure~\ref{fig:example2_results_h}. We see the reduction of number of iterations when refining the mesh in all the cases, with the lowest number required by the block upper triangular $ \mathcal{M}_U $.  We also notice that the solver manages to provide the correct solution on all given meshes in an acceptable number of iterations. The results are slightly worse than the previous example, but keep in mind that the complex geometry is still an important factor in the mesh structure and, therefore, influences the convergence rate since the shape regularity of the mesh deteriorates. For complex fracture networks, it is beneficial to invest in constructing a more regular mesh of the fractured porous medium and then applying the proposed block preconditioners in the iterative solvers. 

	
	\subsection{Example: three-dimensional Geiger network}
	\label{subsec:example3}
	
	\begin{figure}[htbp]
		\centering
		\includegraphics[width=0.45\textwidth]{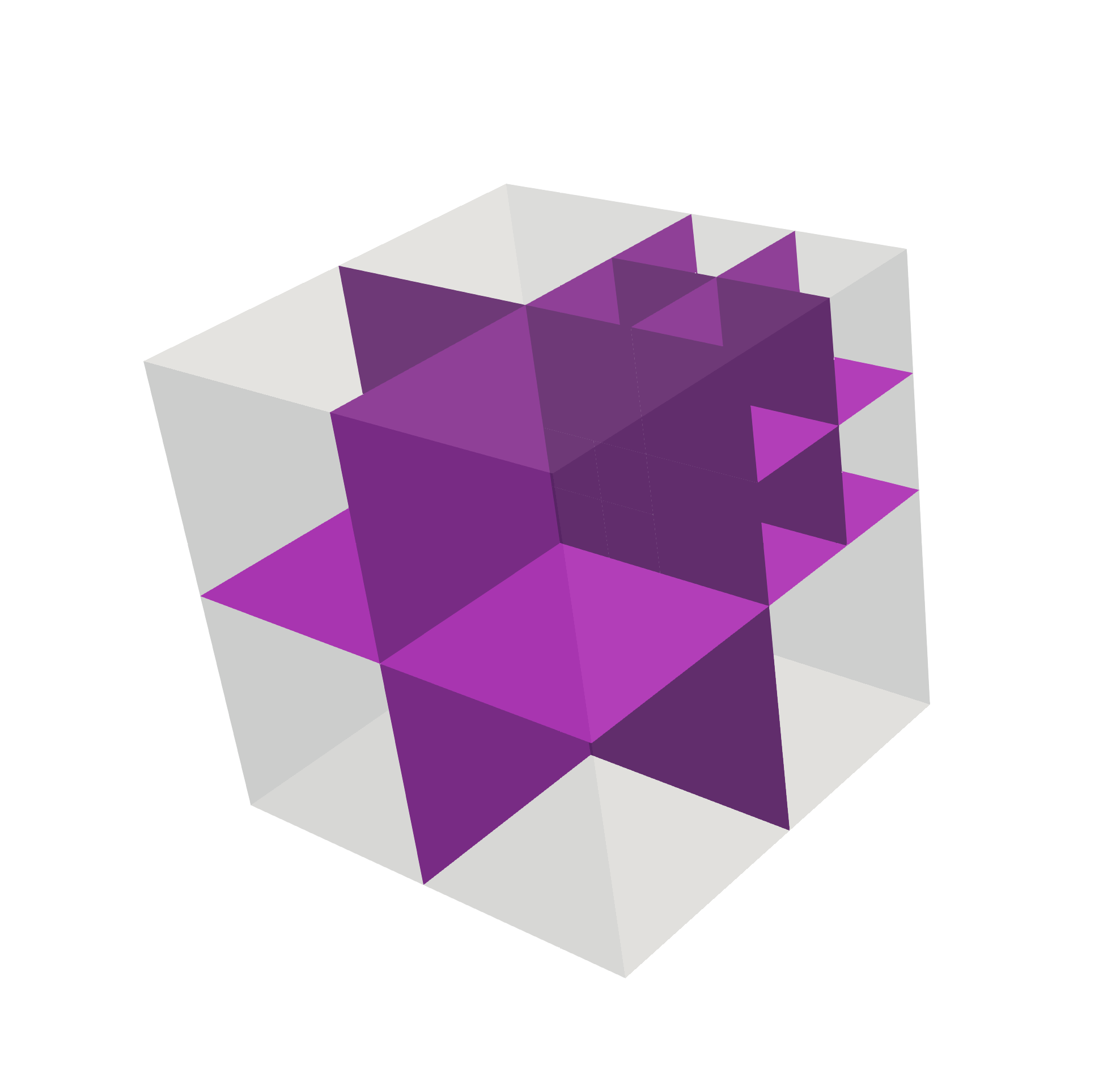}
		\includegraphics[width=0.45\textwidth]{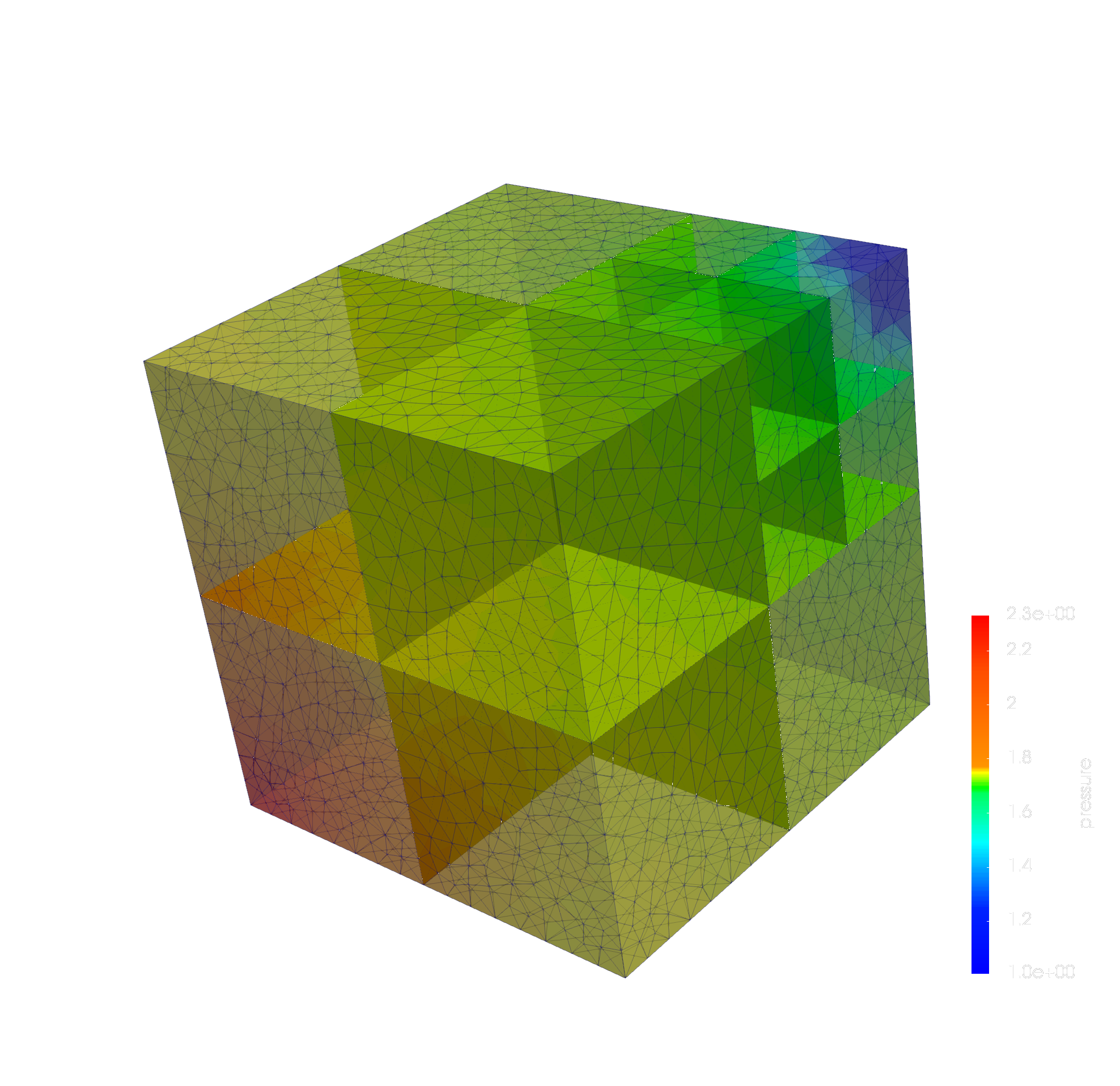}
		\caption{(Left) Graphical representation of the three-dimensional domain and fracture network
			geometry of Example \ref{subsec:example3}. (Right) Pressure solution for a case of conducting fractures.}
		\label{fig:geiger_3d_domain}
	\end{figure}
	
	This last example considers the simulations of a 3D problem taken from another benchmark study \cite{berre2018}, a three-dimensional analogue to the test case in Subsection \ref{subsec:example1}. The geometry is extended to the unit cube and the fracture network now consists of nine intersecting planes (see Figure \ref{fig:geiger_3d_domain}). As before, we take the rock matrix permeability $ K $ to be the identity tensor, while we vary the tangential $ K_f $ and the normal $ K_{\bm{\nu}} $ permeability, as well as the fracture aperture $ \gamma $. 
	
	For a fair comparison with the two-dimensional case, we perform similar robustness tests of the preconditioners to study the effect of mesh refinement, as well as permeability and aperture changes. However, we stick to only inexact preconditioners $ \mathcal{M}_D $, $ \mathcal{M}_L $ and $ \mathcal{M}_U $ since they are less computationally expensive and perform comparably well, which makes them good choices in practice. The results are presented in Tables \ref{tab:example3_results_h}--\ref{tab:example3_results_perm}. We can see that the simulations confirm the findings of Section \ref{sec:precond_MD}: all block preconditioners show robustness with respect to the discretization and physical parameters. The block diagonal preconditioner requires a slightly higher number of iterations to converge compared to block triangular ones, as we saw in the previous example.
	
	\begin{table}[htbp]
		\centering
		\begin{tabular}{r|r|r|r|}
			\cline{2-4}
			& \multicolumn{3}{c|}{\textit{Inexact}} \\
			\hline \hline
			\multicolumn{1}{|c||}{$h$} & $\mathcal{M}_D$ & $\mathcal{M}_L$ & $\mathcal{M}_U$ \\ \hline
			\multicolumn{1}{|l||}{$1/4$} & 26 & 18 & 15  \\ \hline
			\multicolumn{1}{|l||}{$1/8$} & 26 & 17 & 15    \\ \hline
			\multicolumn{1}{|l||}{$1/16$} & 24 & 16 & 14    \\ \hline
			\multicolumn{1}{|l||}{$1/32$} & 24 & 16 & 13     \\ \hline
			\multicolumn{1}{|l||}{$1/64$} & 24 & 16 & 12     \\ \hline
		\end{tabular}
		\caption{Number of outer iterations of FGMRES solver with exact and inexact block preconditioners for the case study in Example \ref{subsec:example2}. Varying mesh size $h$ while aperture is set to $ \gamma = 1/100 $ and all permeabilities are set to $ K = K_f = K_{\bm{\nu}} = \bm{I} $.}%
		\label{tab:example3_results_h}
	\end{table}

	\begin{table}[htbp]
		\centering
		\begin{tabular}{r|r|r|r|}
			\cline{2-4}
			& \multicolumn{3}{c|}{\textit{Inexact}} \\
			\hline \hline
			\multicolumn{1}{|c||}{$\gamma$} & $\mathcal{M}_D$ & $\mathcal{M}_L$ & $\mathcal{M}_U$ \\ \hline
			\multicolumn{1}{|l||}{$1$} & 24 & 16 & 14  \\ \hline
			\multicolumn{1}{|l||}{$1/10$} & 24 & 16 & 13  \\ \hline
			\multicolumn{1}{|l||}{$1/100$} & 24 & 16 & 14  \\ \hline
			\multicolumn{1}{|l||}{$1/1000$} & 26 & 16 & 14  \\ \hline
			\multicolumn{1}{|l||}{$1/10000$} & 26 & 17 & 14  \\ \hline
		\end{tabular}
		\caption{Number of outer iterations of FGMRES solver with exact and inexact block preconditioners for the case study in Example \ref{subsec:example3}. Varying aperture $\gamma$ while mesh size is set to $ h = 1/16 $ and all permeabilities are set to $ K = K_f = K_{\bm{\nu}} = \bm{I} $.}%
		\label{tab:example3_results_aperture}
	\end{table}

	\begin{table}[htbp]
		\centering
		\begin{tabular}{r|r|r|r|}
			\cline{2-4}
			& \multicolumn{3}{c|}{\textit{Inexact}} \\
			\hline \hline
			\multicolumn{1}{|c||}{$K$} & $\mathcal{M}_D$ & $\mathcal{M}_L$ & $\mathcal{M}_U$ \\ \hline
			\multicolumn{1}{|l||}{$K_f = 10^{-4}\bm{I}, K_{\bm{\nu}} = 10^{-4}\bm{I}$} & 28 & 19 & 20 \\ \hline
			\multicolumn{1}{|l||}{$K_f = 10^{-4}\bm{I}, K_{\bm{\nu}} = \bm{I}$} & 26 & 17 & 14  \\ \hline
			\multicolumn{1}{|l||}{$K_f = 10^{-4}\bm{I}, K_{\bm{\nu}} = 10^{4}\bm{I}$} & 28 & 17 & 14 \\ \hline
			\multicolumn{1}{|l||}{$K_f = \bm{I}, K_{\bm{\nu}} = 10^{-4}\bm{I}$} & 26 & 21 & 18  \\ \hline
			\multicolumn{1}{|l||}{$K_f = \bm{I}, K_{\bm{\nu}} = \bm{I}$} & 24 & 16 & 14 \\ \hline
			\multicolumn{1}{|l||}{$K_f = \bm{I}, K_{\bm{\nu}} = 10^{4}\bm{I}$} & 26 & 17 & 14 \\ \hline
			\multicolumn{1}{|l||}{$K_f = 10^{4}\bm{I}, K_{\bm{\nu}} = 10^{-4}\bm{I}$} & 24 & 16 & 17 \\ \hline
			\multicolumn{1}{|l||}{$K_f = 10^{4}\bm{I}, K_{\bm{\nu}} = \bm{I}$} & 22 & 15 & 13 \\ \hline
			\multicolumn{1}{|l||}{$K_f = 10^{4}\bm{I}, K_{\bm{\nu}} = 10^{4}\bm{I}$} & 22 & 15 & 13 \\ \hline
		\end{tabular}
		\caption{Number of outer iterations of FGMRES solver with exact and inexact block preconditioners for the case study in Example \ref{subsec:example3}. Varying the permeability $K_f$ and $K_{\bm{\nu}}$ while mesh size is set to $ h = 1/16 $ and aperture is set to $ \gamma = 1/100 $.}%
		\label{tab:example3_results_perm}
	\end{table}
	
	In 3D simulations it is also important to study the overall computational complexity of the solving method. For that, we analyze in Figure \ref{fig:cpu_time} the required CPU time of the FGMRES solver preconditioned with each block preconditioner $ \mathcal{M}_D $, $ \mathcal{M}_L $ and $ \mathcal{M}_U $. All preconditioners show a optimal $ \mathcal{O}(N_{dof}) $ complexity, where $ N_{dof} $ is the number of degrees of freedom of the discretized system. Notice that even though the block triangular pair of preconditioners require solving a denser system, it is still time-wise less expensive due to a lower number of iterations needed to converge.
	
	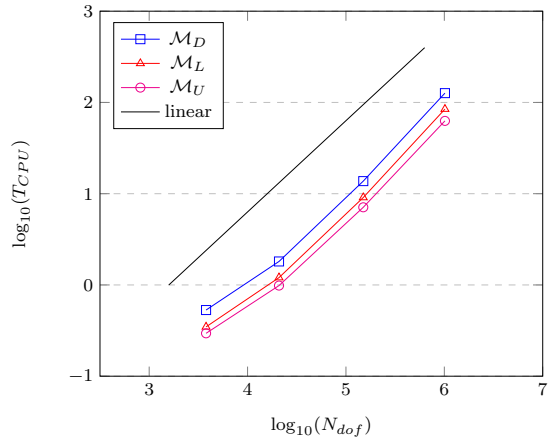
\begin{figure}[htbp]
		\centering 
		\scalebox{0.85}{
			\begin{tikzpicture}
			\begin{axis}[
			xlabel={$\log_{10}(N_{dof})$},
			ylabel={$\log_{10}(T_{CPU})$},
			xmin=2.5, xmax=7,
			ymin=-1, ymax=3,
			xtick={3, 4, 5, 6, 7},
			ytick={-1, 0, 1, 2, 3},
			legend pos=north west,
			ymajorgrids=true,
			grid style=dashed,
			]
			
			\addplot[
			color=blue,
			mark=square,
			]
			coordinates {
				(3.5781, -0.2751)(4.3196, 0.2574)(5.1761, 1.1378)(6.0062, 2.1020)
			};
			\addlegendentry{$\mathcal{M}_D$}
			
			\addplot[
			color=red,
			mark=triangle,
			]
			coordinates {
				(3.5781, -0.4595)(4.3196, 0.0805)(5.1761, 0.9582)(6.0062, 1.9269)
			};
			\addlegendentry{$\mathcal{M}_L$}
			
			\addplot[
			color=magenta,
			mark=o,
			]
			coordinates {
				(3.5781, -0.5302)(4.3196, -0.0069)(5.1761, 0.8509)(6.0062, 1.7978)
			};
			\addlegendentry{$\mathcal{M}_U$}
			
			\addplot[
			domain=3.2:5.8,
			samples=40,
			color=black,
			]
			{x - 3.2};
			\addlegendentry{linear}
			
			\end{axis}
			\end{tikzpicture}
		}
		\caption{CPU time $ T_{CPU} $ of the preconditioned FGMRES algorithm verses number of degrees of freedom $ N_{dof} $ of the discretized system in Example \ref{subsec:example3}.}
		\label{fig:cpu_time}
	\end{figure}


	\section{Conclusions}
	\label{sec:conclusion}
	
	We have presented block preconditioners for linear systems arising in mixed-dimensional modeling of single-phase flow in fractured porous media. Our approach is based on the stability theory of the mixed finite element discretization of the model which we extended to provide an efficient way to solve large systems with standard Krylov subspace iterative methods. We have thoroughly analyzed the robustness of the derived preconditioners with regard to discretization and physical parameters by proving norm and field-of-value equivalence to the original system. Our theory has also been supported by several numerical examples of 2D and 3D flow simulations. 
	
	It is noteworthy to mention that even though our analysis depends on a more regular mesh, the numerical results show that the preconditioners still perform well since the mixed-dimensional discretization approach handles fractures independently of the rock matrix and, therefore, generates simpler meshes in most fracture network cases. The large aspect ratios that parametrize the model then become the main stability problem, which we have successfully overcome with the proposed block preconditioners. This is important for implementations in general geological simulations where the rock-fracture configuration can be quite complex and can contain a large number of fractures of different width and length.
	
	We conclude by recalling that the alternative approach to block preconditioners mentioned in the beginning of Section \ref{sec:precond} is a non-trivial extension to this work and a part of an ongoing research.


	\section{Acknowledgements}
	A special thanks is extended to James Adler, Alessio Fumagalli and Eirik Keilegavlen for valuable comments and discussions on the presented work.  The authors also would like to thank Casey Cavanaugh for improving the style of the presentation.

		
	\bibliographystyle{spmpsci}
	\bibliography{references}

\begin{thebibliography}{10}
\providecommand{\url}[1]{{#1}}
\providecommand{\urlprefix}{URL }
\expandafter\ifx\csname urlstyle\endcsname\relax
  \providecommand{\doi}[1]{DOI~\discretionary{}{}{}#1}\else
  \providecommand{\doi}{DOI~\discretionary{}{}{}\begingroup
  \urlstyle{rm}\Url}\fi

\bibitem{adler:gaspar:hu:ohm:rodrigo:zikatanov}
Adler, J.H., Gaspar, F.J., Hu, X., Ohm, P., Rodrigo, C., Zikatanov, L.T.:
  Robust preconditioners for a new stabilized discretization of the poroelastic
  equations.
\newblock arXiv:1905.10353 [math.NA]  (2019).
\newblock \urlprefix\url{https://arxiv.org/abs/1905.10353}

\bibitem{alder:hu:rodrigo:zikatanov}
Adler, J.H., Gaspar, F.J., Hu, X., Rodrigo, C., Zikatanov, L.T.: Robust block
  preconditioners for biot's model.
\newblock In: P.E. Bj{\o}rstad, S.C. Brenner, L.~Halpern, H.H. Kim,
  R.~Kornhuber, T.~Rahman, O.B. Widlund (eds.) Domain Decomposition Methods in
  Science and Engineering XXIV, pp. 3--16. Springer International Publishing,
  Cham (2018)

\bibitem{hazmath}
Adler, J.H., Hu, X., Zikatanov, L.T.: \texttt{HAZMATH}: A simple finite
  element, graph and solver library

\bibitem{arraras:paco}
Arrarás, A., Gaspar, F.J., Portero, L., Rodrigo, C.: Monolithic
  mixed-dimensional multigrid methods for single-phase flow in fractured porous
  media.
\newblock arXiv:1811.01264 [math.NA]  (2018).
\newblock \urlprefix\url{https://arxiv.org/abs/1811.01264}

\bibitem{berre2018}
Berre, I., Boon, W., Flemisch, B., Fumagalli, A., Gläser, D., Keilegavlen, E.,
  Scotti, A., Stefansson, I., Tatomir, A.: Call for participation: Verification
  benchmarks for single-phase flow in three-dimensional fractured porous media.
\newblock arXiv:1809.06926 [math.NA]  (2018).
\newblock \urlprefix\url{https://arxiv.org/abs/1809.06926}

\bibitem{BoffiBrezziFortin2013}
Boffi, D., Brezzi, F., Fortin, M.: Mixed {{Finite Element Methods}} and
  {{Applications}}, \emph{Springer {{Series}} in {{Computational
  Mathematics}}}, vol.~44.
\newblock {Springer Berlin Heidelberg}, Berlin, Heidelberg (2013).
\newblock \doi{10.1007/978-3-642-36519-5}

\bibitem{boon:budisa:hu}
Boon, W., Budi\v{s}a, A., Hu, X.: Mixed-dimensional auxiliary space
  preconditioners.
\newblock In preparation  (2019)

\bibitem{boon:nordbotten:vatne}
Boon, W.M., Nordbotten, J.M., Vatne, J.E.: Functional analysis and exterior
  calculus on mixed-dimensional geometries.
\newblock arXiv:1710.00556v3 [math.AP]  (2018).
\newblock \urlprefix\url{https://arxiv.org/abs/1710.00556v3}

\bibitem{boon:nordbotten:yotov}
Boon, W.M., Nordbotten, J.M., Yotov, I.: Robust discretization of flow in
  fractured porous media.
\newblock SIAM J. Numer. Anal. \textbf{56}(4), 2203--2233 (2018).
\newblock \doi{10.1137/17M1139102}.
\newblock \urlprefix\url{https://doi.org/10.1137/17M1139102}

\bibitem{brenner}
Brenner, S.C., Scott, L.R.: The mathematical theory of finite element methods,
  \emph{Texts in Applied Mathematics}, vol.~15, third edn.
\newblock Springer, New York (2008).
\newblock \doi{10.1007/978-0-387-75934-0}.
\newblock \urlprefix\url{https://doi.org/10.1007/978-0-387-75934-0}

\bibitem{Brezzi1974}
Brezzi, F.: On the existence, uniqueness and approximation of saddle-point
  problems arising from {{Lagrangian}} multipliers.
\newblock Rev. Fran{\c c}aise Automat. Informat. Recherche Op\'erationnelle
  S\'er. Rouge \textbf{8}(R-2), 129--151 (1974)

\bibitem{scotti_xfem}
D’Angelo, C., Scotti, A.: A mixed finite element method for darcy flow in
  fractured porous media with non-matching grids.
\newblock ESAIM: Mathematical Modelling and Numerical Analysis \textbf{46}(2),
  465--489 (2011).
\newblock \urlprefix\url{http://eudml.org/doc/222154}

\bibitem{eisenstat:elman:schultz}
Eisenstat, S., Elman, H., Schultz, M.: Variational iterative methods for
  nonsymmetric systems of linear equations.
\newblock SIAM Journal on Numerical Analysis \textbf{20}(2), 345--357 (1983).
\newblock \doi{10.1137/0720023}.
\newblock \urlprefix\url{https://doi.org/10.1137/0720023}

\bibitem{elman:1982}
Elman, H.C.: Iterative methods for large, sparse, nonsymmetric systems of
  linear equations.
\newblock Ph.D. thesis, New Haven, CT, USA (1982).
\newblock AAI8222744

\bibitem{Flemisch2016a}
Flemisch, B., Berre, I., Boon, W., Fumagalli, A., Schwenck, N., Scotti, A.,
  Stefansson, I., Tatomir, A.: Benchmarks for single-phase flow in fractured
  porous media.
\newblock Advances in Water Resources \textbf{111}, 239--258 (2018).
\newblock \doi{10.1016/j.advwatres.2017.10.036}.
\newblock
  \urlprefix\url{https://www.sciencedirect.com/science/article/pii/S0309170817300143}

\bibitem{formaggia2014}
Formaggia, L., Fumagalli, A., Scotti, A., Ruffo, P.: A reduced model for
  {D}arcy's problem in networks of fractures.
\newblock ESAIM Math. Model. Numer. Anal. \textbf{48}(4), 1089--1116 (2014).
\newblock \doi{10.1051/m2an/2013132}.
\newblock \urlprefix\url{https://doi.org/10.1051/m2an/2013132}

\bibitem{formaggia:scotti2018}
{Formaggia, Luca}, {Scotti, Anna}, {Sottocasa, Federica}: Analysis of a mimetic
  finite difference approximation of flows in fractured porous media.
\newblock ESAIM: M2AN \textbf{52}(2), 595--630 (2018).
\newblock \doi{10.1051/m2an/2017028}.
\newblock \urlprefix\url{https://doi.org/10.1051/m2an/2017028}

\bibitem{frih:roberts}
Frih, N., Martin, V., Elizabeth~Roberts, J., Saada, A.: Modeling fractures as
  interfaces with nonmatching grids.
\newblock Computational Geosciences \textbf{16}, 1043--1060 (2012).
\newblock \doi{10.1007/s10596-012-9302-6}

\bibitem{fumagalli2019}
{Fumagalli, Alessio}, {Keilegavlen, Eirik}: Dual virtual element methods for
  discrete fracture matrix models.
\newblock Oil Gas Sci. Technol. - Rev. IFP Energies nouvelles \textbf{74}, 41
  (2019).
\newblock \doi{10.2516/ogst/2019008}.
\newblock \urlprefix\url{https://doi.org/10.2516/ogst/2019008}

\bibitem{greenbaum1997}
Greenbaum, A.: Iterative Methods for Solving Linear Systems.
\newblock Society for Industrial and Applied Mathematics (1997).
\newblock \doi{10.1137/1.9781611970937}.
\newblock
  \urlprefix\url{https://epubs.siam.org/doi/abs/10.1137/1.9781611970937}

\bibitem{hiptmair:xu}
Hiptmair, R., Xu, J.: Nodal auxiliary space preconditioning in h(curl) and
  h(div) spaces.
\newblock SIAM Journal on Numerical Analysis \textbf{45}(6), 2483--2509 (2007).
\newblock \doi{10.1137/060660588}

\bibitem{karimifard}
Karimi-Fard, M., J.~Durlofsky, L., Aziz, K.: An efficient discrete-fracture
  model applicable for general-purpose reservoir simulators.
\newblock SPE Journal - SPE J \textbf{9}, 227--236 (2004).
\newblock \doi{10.2118/88812-PA}

\bibitem{Keilegavlen2017a}
Keilegavlen, E., Fumagalli, A., Berge, R., Stefansson, I., Berre, I.:
  \texttt{PorePy}: An open source simulation tool for flow and transport in
  deformable fractured rocks.
\newblock arXiv:1712.00460 [cs.CE]  (2017).
\newblock \urlprefix\url{https://arxiv.org/abs/1712.00460}

\bibitem{kolev:vassilevski}
Kolev, T., Vassilevski, P.: Parallel auxiliary space amg solver for \$h(div)\$
  problems.
\newblock SIAM Journal on Scientific Computing \textbf{34}(6), A3079--A3098
  (2012).
\newblock \doi{10.1137/110859361}

\bibitem{loghin:wathen}
Loghin, D., Wathen, A.J.: Analysis of preconditioners for saddle-point
  problems.
\newblock SIAM J. Sci. Comput. \textbf{25}(6), 2029--2049 (2004).
\newblock \doi{10.1137/S1064827502418203}.
\newblock \urlprefix\url{https://doi.org/10.1137/S1064827502418203}

\bibitem{mardal:winther}
Mardal, K.A., Winther, R.: Preconditioning discretizations of systems of
  partial differential equations.
\newblock Numer. Linear Algebra Appl. \textbf{18}(1), 1--40 (2011).
\newblock \doi{10.1002/nla.716}.
\newblock \urlprefix\url{https://doi.org/10.1002/nla.716}

\bibitem{nedelec}
N\'{e}d\'{e}lec, J.C.: Mixed finite elements in {${\bf R}^{3}$}.
\newblock Numer. Math. \textbf{35}(3), 315--341 (1980).
\newblock \doi{10.1007/BF01396415}.
\newblock \urlprefix\url{https://doi.org/10.1007/BF01396415}

\bibitem{Nordbotten2018}
Nordbotten, J.M., Boon, W.M., Fumagalli, A., Keilegavlen, E.: Unified approach
  to discretization of flow in fractured porous media.
\newblock Computational Geosciences pp. 1--13 (2018).
\newblock \doi{10.1007/s10596-018-9778-9}.
\newblock \urlprefix\url{https://doi.org/10.1007/s10596-018-9778-9}

\bibitem{raviart:thomas}
Raviart, P.A., Thomas, J.M.: A mixed finite element method for 2nd order
  elliptic problems.
\newblock In: Mathematical aspects of finite element methods, Lecture Notes in
  Math., Vol. 606, pp. 292--315. Springer, Berlin (1977)

\bibitem{sandve2012}
Sandve, T.H., Berre, I., Nordbotten, J.M.: An efficient multi-point flux
  approximation method for discrete fracture–matrix simulations.
\newblock Journal of Computational Physics \textbf{231}(9), 3784 -- 3800
  (2012).
\newblock \doi{https://doi.org/10.1016/j.jcp.2012.01.023}.
\newblock
  \urlprefix\url{http://www.sciencedirect.com/science/article/pii/S0021999112000447}

\bibitem{Schwenck2015}
Schwenck, N., Flemisch, B., Helmig, R., Wohlmuth, B.I.: Dimensionally reduced
  flow models in fractured porous media: crossings and boundaries.
\newblock Computational Geosciences \textbf{19}(6), 1219--1230 (2015).
\newblock \doi{10.1007/s10596-015-9536-1}.
\newblock \urlprefix\url{https://doi.org/10.1007/s10596-015-9536-1}

\end{thebibliography}

\end{document}